\setlist{label=(\arabic*)}
\defcite\alp{alperin_unipotent}
\defcite\evs{evseev_conjugacy}
\defcite\assem{assem_elements}
\defcite\vis{vistoli_grothendieck}
\defcite\mac{macdonald_symmetric}
\opra\lrg{\Ring_\la}
\def\rbra#1{\bigg(#1\bigg)}
\def\bd{\mathbf d}
\def\cc#1#2{\mat{\ga(#1,#2)}} %number of orbits/conj classes
\def\vl#1{\sbr{#1}}
\def\Z{Z}
\def\z{s}
\def\ai{\mathrm{a.i.}}
\def\fs{\mathsf k}
\def\fm{\mathsf m}
\def\inj{\mathrm{inj}}
\def\isoto{\xto{\,\smash{ 
	\raisebox{-.4ex}{\ensuremath{\scriptstyle\sim}}
}\,}}
\begin{document}
\title{Commuting matrices and volumes of linear stacks}
\author{Sergey Mozgovoy}

\email{mozgovoy@maths.tcd.ie}
\address{School of Mathematics, Trinity College Dublin, Ireland\newline\indent
Hamilton Mathematics Institute, Ireland}

\begin{abstract}
A conjecture by Higman asserts that the number of conjugacy classes in the unipotent group of upper triangular matrices over a finite field depends polynomially on the number of elements of the field. We will study several alternative counting problems arising from quiver representations and prove explicit formulas relating the corresponding invariants to the invariants of Higman's conjecture. To do this, we develop a general framework of linear stacks over small \'etale sites and study volumes of these stacks and of their substacks of absolutely indecomposable objects.
%This framework can be applied in many situations beyond commuting matrices.
\end{abstract}

\maketitle

\section{Introduction}
Let $B_n\sbs\GL_n(\bF_q)$ be the subgroup consisting of all invertible upper-triangular matrices over a finite field $\bF_q$ and let $U_n\sbs B_n$ be the subgroup consisting of unipotent matrices.
Let $\cc GX$ denote the number of orbits of an action by a group $G$ on a finite set $X$.
The classical conjecture of Higman \cite{higman_enumerating} asserts that the number $\ga(U_n,U_n)$ of $U_n$-conjugacy classes in $U_n$ is a polynomial in $q$.
This conjecture was studied in 
\cite{
vera-lopez_conjugacya,
vera-lopez_some,
vera-lopez_conjugacy,
murray_conjugacy,
%murray_representations,
alperin_unipotent,
evseev_groups,
evseev_conjugacy,
goodwin_counting}.
Its refinement related to characters of groups associated to finite algebras was studied in
\cite{lehrer_discrete,
isaacs_characters,
isaacs_counting,
robinson_counting,
evseev_reduction}.
There is also some evidence that this conjecture could be false \cite{halasi_number,pak_higmans}.

\medskip
Alternatively, consider the number $\ga(B_n,U_n)$ of $B_n$-conjugacy classes in $U_n$ \cite{hille_classification,evseev_groups,evseev_conjugacy}.
It was proved by Evseev \cite{evseev_groups} that the numbers $\cc{U_n}{U_n}$ are polynomials in $q$ if and only if $\cc{B_n}{U_n}$ are.
We will see later that the latter numbers have a better behavior than the former.
By Burnside's lemma we have
\begin{equation}
\cc{U_n}{U_n}=\frac{\#{\sets{(x,y)\in U_n\xx U_n}{xy=yx}}}
{\#{U_n}},
\end{equation}
hence Higman's conjecture is equivalent to the statement that the variety of commuting matrices in $U_n$
(or of commuting strictly upper-triangular matrices) is polynomial-count (see~\S\ref{volume}).
%We can similarly characterize polynomiality of $\ga(B_n,U_n)$ in terms of commuting matrices.
The algebraic variety of commuting matrices in $U_n$ is merely an intersection of several quadrics in an affine space of dimension $n^2-n$.
The difficulty of its point counting is in stark contrast to the simplicity of point counting of the variety of arbitrary commuting matrices.
Let $M_n(\bF_q)$ be the algebra of order $n$ square matrices over $\bF_q$ and let $C_n\sbs M_n(\bF_q)\xx M_n(\bF_q)$ be the subvariety of commuting matrices.
Then $C_n$ is polynomial-count by the classical result of Feit and Fine \cite{feit_pairs}
\begin{equation}
\label{ff1}
\sum_{n\ge0}\frac{\# C_n}{\#\GL_n(\bF_q)}t^n
=\prod_{i\ge1}\prod_{j\ge0}\frac1{1-q^{1-j}t^i}.
\end{equation}

\medskip
More generally, assume that we have a family of finite-dimensional algebras
$A=A(\bk)$, for finite fields $\bk=\bF_q$ (the earlier example corresponds to the algebra of upper-triangular matrices).
Let $A^0\sbs A$ be the set of nilpotent elements,
$A^*\sbs A$ be the set of invertible elements, and let $A^a=A$ for notation reasons.
Equivalently, these are the subsets $A^Z\sbs A$ consisting of elements with the minimal polynomial having roots in $Z(\ubar\bk)$, where $Z\sbs\bA^1$ equals respectively
\begin{equation}
\label{eq:Zs}
Z_0=\set0,\qquad Z_*=\bA^1\ms\set0,\qquad Z_a=\bA^1.
\end{equation}
We can ask if the (families of) numbers of commuting pairs
\begin{equation}
\label{eq:com var}
C^{\z_1,\z_2}=\sets{(x,y)\in A^{\z_1}\xx A^{\z_2}}{xy=yx},\qquad \z_i\in\set{0,*,a},
\end{equation}
are polynomial-count, and if there is a relation between them.
For example, given a finite poset~$S$,
% (partially ordered set), 
consider its incidence algebra $A=\bk S$ -- having the basis $e_{xy}$, for $x\le y$, and multiplication $e_{xy}e_{y'z}=\de_{yy'}e_{xz}$.
One knows that there exist posets $S$ such that the number of commuting pairs in $A^0\xx A^0$ is not polynomial-count \cite{halasi_number,pak_higmans}.
These counterexamples provide the main indication that Higman's conjecture could be false.
%As far as I know, polynomiality in the case of $A^0\xx A^*$ (for incidence algebras of posets) is open,
%although one should expect that it is not satisfied for general posets.
%was not discussed in the literature.

\medskip
In this paper we will study the following class of algebras.
Let $Q$ be an acyclic quiver and let $A=\bk Q$ be its path algebra over $\bk=\bF_q$.
For every vertex $i\in Q_0$, 
%let $e_i\in A$ be the corresponding idempotent and 
let $P(i)$ be the corresponding indecomposable projective representation.
Given $\bd=(d_i)_{i\in Q_0}\in\bN^{Q_0}$,
define $P_\bd=\bop_i P(i)^{d_i}$ and $A_\bd=\End(P_\bd)$.
%We will be interested in the commuting pairs of elements in $A_\bd$ and its subsets.
Note that $A_\bd=A$ for $\bd=(1,\dots,1)$.
For example, consider the quiver
$$1\to 2\to \dots\to n$$
Then $P_\bd$ is a representation of the form
$$\bk^{d_1}\emb\bk^{d_1+d_2}\emb\dots\emb\bk^{m},\qquad m=d_1+\dots+d_n.$$
Therefore $A_\bd=\End(P_\bd)$ can be identified with the algebra of block upper-triangular matrices in $M_m(\bk)$, with blocks of size $d_i$ on the diagonal.
The analogue of Higman's conjecture for these algebras was studied by Evseev
\cite{evseev_groups,evseev_conjugacy}.
In particular, for $\bd=(1,\dots,1)$, the algebra $A_\bd=A$ can be identified with the algebra of upper-triangular matrices and we have $B_n=A_\bd^*$, $U_n=1+A_\bd^0$.
We can generalize Higman's conjecture (\cf \cite{evseev_conjugacy}):

\begin{conjecture}
\label{conj1}
For any acyclic quiver $Q$ and any $\bd\in\bN^{Q_0}$,
the number of commuting pairs in $A_\bd^0\xx A_\bd^0$ is polynomial-count.
\end{conjecture}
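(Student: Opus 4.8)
The plan is to trade the nilpotency constraint for the geometry of the joint spectrum, using the three variants $Z_0,Z_*,Z_a$ of \eqref{eq:Zs} and the fact that $A=\bk Q$ is hereditary. A commuting pair $(x,y)\in A_\bd^0\xx A_\bd^0$ is the same datum as an $A\ts\bk\sbr{s,t}$-module structure on $P_\bd$ with $s,t$ acting by $x,y$; nilpotency of $x,y$ says precisely that the $\bk\sbr{s,t}$-support sits at the origin of $\bA^2=\Spec\bk\sbr{s,t}$. Collecting these objects over all $\bd$ into the volume of the associated linear stack over the \'etale site of $\bA^2$, I would obtain a generating series whose local factor at the origin records exactly $\#C_\bd^{0,0}$, the quantity in the conjecture.

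Next I would treat the unconstrained count $\#C_\bd^{a,a}$ directly. For an arbitrary commuting pair the joint generalized-eigenspace decomposition splits $P_\bd$ into an orthogonal sum indexed by Galois orbits of points $(\la,\mu)\in\bA^2(\ubar\bk)$, i.e.\ by closed points of $\bA^2$, and on the summand attached to $(\la,\mu)$ the shifted pair $(x-\la,\,y-\mu)$ is again nilpotent-commuting. This produces an Euler product expressing the volume of the stack of all commuting pairs as a product over the closed points of $\bA^2$, the $A_\bd$-analogue of the Feit--Fine identity \eqref{ff1}. Each local factor is a base change of the single origin factor, and a Feit--Fine-type computation --- in which the generic eigenvalues decouple the blocks --- shows that $\#C_\bd^{a,a}$ is polynomial-count.

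The hard part is the inversion. The Euler product writes the polynomial-count global volume as a plethystic exponential of the origin series summed over all closed points of $\bA^2$; to isolate the origin factor one must subtract the contribution of every other point, and that contribution is a copy of the same nilpotent count over a finite extension $\bk'/\bk$. The product therefore makes polynomial-count of $\#C_\bd^{a,a}$ \emph{equivalent} to, rather than implied by, polynomial-count of $\#C_\bd^{0,0}$, so the circularity cannot be broken by point counting alone. The genuine crux --- exactly as in Higman's conjecture, which is the case $\bd=(1,\dots,1)$ of the linear quiver $1\to\dots\to n$ --- is to prove that the substack of \emph{absolutely indecomposable} nilpotent-commuting objects has polynomial volume, a Kac-type polynomiality statement for modules over $A\ts\bk\sbr{s,t}$ supported at the origin. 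I do not expect an unconditional proof for all $Q$ and $\bd$: the counterexamples of \cite{halasi_number,pak_higmans} warn that such polynomiality can fail for general incidence algebras, so the realistic outcome is to reduce the conjecture to Higman's conjecture and to verify it unconditionally only where the origin stratum is small enough to compute by hand.
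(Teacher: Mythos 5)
You have not proved the statement, and you could not have: this is Conjecture~\ref{conj1}, which the paper leaves open --- it proves no theorem establishing polynomiality of $\#C^{0,0}_\bd$, only the equivalence recorded in Theorem~\ref{th:main1}. Your proposal in fact concedes this in its final paragraph, so the honest comparison is between your reduction and the paper's. There your sketch contains one genuinely wrong step: in the second paragraph you claim that a Feit--Fine-type computation, with ``generic eigenvalues decoupling the blocks,'' shows that $\#C^{a,a}_\bd$ is polynomial-count. This fails for a general acyclic quiver. The decoupling argument works for the full matrix algebra (the trivial quiver, which is the actual Feit--Fine setting \eqref{ff1}) because there the local factors at nonzero eigenvalues can be evaluated in closed form; for $A_\bd=\End(P_\bd)$ the local factor at every closed point of $\bA^2$ is a base change of the unknown origin factor, so the Euler product determines nothing unconditionally. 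Indeed, by Theorem~\ref{th:main1} all nine series $\sH^{\z_1,\z_2}(t)$ are plethystic exponentials of the \emph{same} unknown series $\sA(t)$, so polynomiality of $C^{a,a}_\bd$ is \emph{equivalent} to the conjecture, not a computable input. Your own third paragraph correctly retracts the claim, but as written the proposal asserts and then withdraws its only substantive step.

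What your sketch does correctly reconstruct --- the Euler product over closed points and the resulting equivalence between the $(0,0)$, $(a,a)$, and origin counts --- is precisely the content the paper proves, but by a different and sharper route: Theorem~\ref{th:main proof} is proved abstractly for any linear stack via the Krull--Schmidt decomposition, the splitting-field analysis of Theorem~\ref{th:splitting}, and the power-structure identity of Theorem~\ref{power}, and is then applied \emph{twice} (using Corollary~\ref{cor:compare ai}, $\sA_{\cA^Z}(t)=[Z]\sA_{\cA^0}(t)$) to handle the commuting pair. A consequence you miss is that the paper's reduction target is smaller than yours: the key invariant $[\sA_\bd]$ counts absolutely indecomposable pairs $(P_\bd,\vi)$ with a \emph{single} nilpotent endomorphism, i.e.\ absolutely indecomposable objects of $\cA^0$, not absolutely indecomposable $A\ts\bk\sbr{s,t}$-modules supported at the origin (two commuting nilpotents) as in your formulation. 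Your final assessment --- that the conjecture reduces to a Kac-type polynomiality statement and that the counterexamples of \cite{halasi_number,pak_higmans} for incidence algebras preclude optimism about a general proof --- agrees with the paper's framing, but it is a reduction, not a proof, and the reduction itself should be routed through the one-endomorphism invariant $\sA_{\cA^0}$ as in Theorem~\ref{th:main1 proof}.
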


Let us now discuss a relation between the above numbers and other counting problems.
Even though we don't know if the above numbers are polynomial-count, we can perform with them various operations (including plethystic ones \S\ref{lambda}) using a technical tool called the volume ring (see \S\ref{volume} or \cite{mozgovoy_poincare}).
This is a \la-ring $\cV=\prod_{n\ge1}\bQ$ whose elements
comprise the counting information of objects defined over different finite fields.
%; its components are called ghost components in the context of Witt rings, although Witt called them Nebenkomponenten).
Given a family of rational numbers $c(K)$, for finite field extensions $K/\bk=\bF_q$, define its volume $[c]=(c(\bF_{q^n}))_{n\ge1}\in\cV$.
Similarly, given a family
of finite sets $X(K)$, for finite field extensions $K/\bk$ (for example, an algebraic variety $X$ over \bk), define its volume 
\begin{equation}
[X]=\rbr{\#X(\bF_{q^n})}_{n\ge1}\in\cV.
\end{equation}
Such family is called polynomial-count if there exists a polynomial $f\in\bQ[x]$ such that $[X]=[f]=(f(q^n))_{n\ge1}$.
Let us denote $[\bA^1]=(q^n)_{n\ge1}$ simply by $q$, hoping it will not cause any confusion.

\smallskip
For any $\z_1,\z_2\in\set{0,*,a}$, define $C^{\z_1,\z_2}_\bd\sbs A_\bd^{s_1}\xx A_\bd^{s_2}$ as in~\eqref{eq:com var}
and let
\begin{equation}
\label{eq:Hss1}
\sH^{\z_1,\z_2}(t)=\sum_{\bd\in\bN^{Q_0}}\frac{\vl{ C^{\z_1,\z_2}_\bd}}{\vl{\Aut(P_\bd)}}t^\bd\in\cV\pser{t_i\rcol i\in Q_0}.
\end{equation}
We are going to prove relations between the series $\sH^{\z_1,\z_2}(t)$ for various $\z_1,\z_2$.
To get a taste of things to come, it is instructive to consider representations of the trivial quiver (they are just vector spaces) equipped with one endomorphism.
Thus, we consider
\begin{equation}
\sH^\z(t)=\sum_{n\ge0}\frac{\vl{M_n^\z}}{\vl{\GL_n}}t^{n},\qquad \z\in\set{0,*,a}.
\end{equation}
Then
\begin{equation}
\sH^*(t)
%=\sum_{n\ge0}t^n
=\frac1{1-t},\qquad
\sH^a(t)=\sum_{n\ge0}\frac{t^n}{(q\inv)_n}
=\prod_{i\ge0}\frac1{1-q^{-i}t},
\end{equation}
where
%$(t;q)_n=\prod_{i=0}^{n-1}(1-q^it)$, $(q)_n=(q;q)_n$
$(q)_n=\prod_{i=1}^n(1-q^i)$ and the last equation follows from the $q$-binomial theorem.
Let us rewrite these equations using the plethystic exponential (see \S\ref{lambda}) on $\cV\pser{t}$, satisfying
\begin{equation}
\Exp(f+g)=\Exp(f)\Exp(g),\qquad 
\Exp(q^kt^n)=\frac1{1-q^kt^n}.
\end{equation}
We obtain
$$\sH^*(t)=\Exp(t),\qquad \sH^a(t)=\Exp\rbr{\frac q{q-1}t},$$
%Using its inverse $\Log$ \ref{}, called plethystic logarithm, we obtain 
%$$\frac1{q-1}\Log \sH^*(t)=\frac1q\Log \sH^a(t)$$
and it is quite natural to guess that the numerator and the denominator of the fraction correspond to the number of elements in $\bF_q^a$ and $\bF_q^*$ respectively, and moreover, that 
\begin{equation}
\sH^0(t)=\Exp\rbr{\frac t{q-1}}
=\prod_{i\ge0}\frac1{1-q^{-i-1}t}=\sum_{n\ge0}\frac{q^{-n}t^n}{(q\inv)_n}.
\end{equation}
The last formula is equivalent to the equation $\#M^0_n(\bF_q)=q^{n^2-n}$ which is a classical result of Fine and Herstein \cite{fine_probability}.
One more ingredient of the above formulas, not easy to detect with a naked eye, is that the series $\sA(t)=t$ under the plethystic exponential in $\sH^*(t)$ counts (absolutely) indecomposable vector spaces up to an isomorphism.
% (there is just one such representation which has dimension one).
An objects is called absolutely indecomposable
% means that an object 
if it stays indecomposable after field extensions.
Now we are ready to formulate:

\begin{theorem}
\label{th:main1}
For any $\z_1,\z_2\in\set{0,*,a}$, we have
\begin{equation}
\sH^{\z_1,\z_2}(t)=\Exp\rbr{\frac{[\Z_{\z_1}][\Z_{\z_2}]}{q-1}\sA(t)},
\end{equation}
where $\sA(t)=\sum_\bd[\sA_\bd] t^\bd$, $[\sA_\bd]$ counts isomorphism classes of absolutely indecomposable objects of the form $(P_\bd,\vi)$ with nilpotent
$\vi\in\End(P_\bd)$,
and $Z_s$ are defined in \eqref{eq:Zs}.
In particular,
%$$\sH^{0,0}(t)=\Exp\rbr{\frac{\sA(t)}{q-1}},\qquad
%\sH^{*,0}(t)=\Exp\rbr{\sA(t)}$$
Conjecture~\ref{conj1} is equivalent to
\begin{enumerate}
\item $C^{\z_1,\z_2}_\bd$ are polynomial-count for all $\bd\in\bN^{Q_0}$.
\item $\sA_\bd$ are polynomial-count for all $\bd\in\bN^{Q_0}$.
\end{enumerate}
\end{theorem}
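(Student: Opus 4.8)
The plan is to compute the series $\sH^{\z_1,\z_2}(t)$ by realizing the commuting pairs as objects of a linear stack, stripping off the eigenvalue data of $x$ and $y$ by a spectral decomposition, and then recognizing the resulting nilpotent core as ``objects carrying a nilpotent endomorphism'', to which the general volume formula for absolutely indecomposable substacks applies. First I would view a commuting pair $(x,y)\in A_\bd^{\z_1}\xx A_\bd^{\z_2}$ as the projective $A$-module $P_\bd$ together with two commuting $A$-linear endomorphisms, i.e. as a module over $A\ts\bk[x,y]$ whose restriction to $A$ is projective of type $P_\bd$ and whose spectra lie in $\Z_{\z_1},\Z_{\z_2}$. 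Since $x,y$ are $A$-linear, their generalized eigenspace (primary) decomposition is a decomposition of $A$-modules; each summand is a direct summand of $P_\bd$, hence again projective of the form $P_{\bd'}$, and over $\bk$ the summands are grouped into Galois orbits of eigenvalue pairs in $\Z_{\z_1}\xx\Z_{\z_2}$. In the volume ring this exhibits the stack as the symmetric power, fibered over the eigenvalue loci, of the substack on which $x$ and $y$ are nilpotent. By multiplicativity of $\Exp$ over such fibrations (the power structure on $\cV$) this contributes exactly the factor $[\Z_{\z_1}][\Z_{\z_2}]$ inside the exponential, precisely as the single-endomorphism identities $\sH^{\z}(t)=\Exp\rbr{\tfrac{[\Z_\z]}{q-1}t}$ arise from the nilpotent case $\sH^0(t)=\Exp\rbr{\tfrac{t}{q-1}}$. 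Thus it suffices to treat $\z_1=\z_2=0$, i.e. to show that the volume $\sH^{0,0}(t)$ of pairs of commuting nilpotent endomorphisms equals $\Exp\rbr{\tfrac1{q-1}\sA(t)}$.

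For this I would use the following reinterpretation. A pair of commuting nilpotent endomorphisms $(x_0,y_0)$ of $P_\bd$ is the same datum as an object $W=(P_\bd,x_0)$ of the linear category $\cC$ of projective $A$-modules equipped with a nilpotent endomorphism, together with a nilpotent endomorphism $y_0\in\End_{\cC}(W)$ of $W$ inside $\cC$. Hence the stack of commuting nilpotent pairs is nothing but the stack of objects of $\cC$ endowed with one nilpotent endomorphism. The general volume formula of the framework then applies: for any linear stack $\cC$ the volume of the stack of its objects equipped with a nilpotent endomorphism equals $\Exp\rbr{\tfrac1{q-1}[\cC^{\ai}]}$, where $[\cC^{\ai}]$ is the volume of the substack of absolutely indecomposable objects. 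For $\cC$ as above the absolutely indecomposable objects are exactly the absolutely indecomposable $(P_\bd,\vi)$ with $\vi$ nilpotent, so $[\cC^{\ai}]=\sA(t)$ and the formula gives $\sH^{0,0}(t)=\Exp\rbr{\tfrac1{q-1}\sA(t)}$. Combined with the first paragraph this yields the claimed identity for all $\z_1,\z_2$.

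The \textbf{main obstacle} is the general volume formula invoked in the second step, and this is exactly where absolute (rather than plain) indecomposability is essential. Over $\bk$ an indecomposable object can have endomorphism ring local with residue a nontrivial field extension of $\bk$; the local ``formal-disk'' factor attached to a nilpotent endomorphism is then built from that residue field rather than from $\bk$, and the clean weight $\tfrac1{q-1}$ would fail. The étale-site framework repackages such objects as absolutely indecomposable objects living over the corresponding extension, which is what makes the count uniform and the factor $\tfrac1{q-1}$ correct; verifying this bookkeeping, together with the Galois descent needed for the spectral decomposition over non-rational eigenvalues, is the technical heart of the argument.

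Finally, the ``in particular'' statements follow formally. Since $\Exp$ and its inverse $\Log$ are given coefficientwise by universal operations with rational coefficients built from the Adams operations, and $[\Z_{\z_1}][\Z_{\z_2}]$ is a fixed expression in $q$, each coefficient of $\sH^{\z_1,\z_2}(t)$ is a universal expression in the coefficients $[\sA_{\bd'}]$, and conversely. As $[\Aut(P_\bd)]$ is polynomial-count, $C^{\z_1,\z_2}_\bd$ is polynomial-count iff the $t^\bd$-coefficient of $\sH^{\z_1,\z_2}$ is. Taking $\z_1=\z_2=0$, where $[\Z_0]=1$, the relation $\sH^{0,0}=\Exp\rbr{\tfrac1{q-1}\sA}$ shows, through $\Exp$ and $\Log$, that all $\sA_\bd$ are polynomial-count iff all $C^{0,0}_\bd$ are, so statement (2) is equivalent to Conjecture~\ref{conj1}; the identity for general $\z_1,\z_2$ then shows statement (2) is equivalent to statement (1).
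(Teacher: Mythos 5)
Your proposal is correct in substance, and its second half is exactly the paper's own mechanism: the paper also treats a commuting nilpotent pair as an object of the iterated stack $(\cA^0)^0$ and applies the general volume formula (Theorem \ref{th:main proof}, i.e.\ Theorem \ref{th:main2}) to the linear stack $\cC=\cA^0$, giving $\sH^{0,0}(t)=\Exp\rbr{\frac{\sA(t)}{q-1}}$. Where you genuinely diverge is the reduction of general $(\z_1,\z_2)$ to $(0,0)$. You propose a direct joint spectral decomposition: primary decomposition of $P_\bd$ under $\bk[x,y]$, Galois orbits of eigenvalue pairs in $\Z_{\z_1}\xx\Z_{\z_2}$, identification of the local factor at a degree-$r$ closed point with nilpotent pairs over $\bF_{q^r}$, and then Theorem \ref{power} to assemble the product into $\Pow\rbr{\sH^{0,0}(t),[\Z_{\z_1}][\Z_{\z_2}]}$. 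This can be made to work, but it amounts to redoing, in two variables and with $A$-module coefficients, the analysis the paper carries out once and for all in the vector-space case (Theorem \ref{th:vect spaces}); the items you flag as the ``technical heart'' (descent at non-rational eigenvalues, matching automorphism counts over residue fields via a Cohen-type identification of the complete local ring with $\bF_{q^r}\pser{u,v}$) are precisely the nontrivial content. The paper avoids all of this by a purely formal, asymmetric argument: it applies Theorem \ref{th:main proof} to the stack $\cB=\cA^{\z_1}$ with endomorphism type $\Z_{\z_2}$, giving $\sH^{\z_1,\z_2}(t)=\Exp\rbr{\frac{[\Z_{\z_2}]}{q-1}\sA_{\cA^{\z_1}}(t)}$, and then extracts the factor $[\Z_{\z_1}]$ from Corollary \ref{cor:compare ai}, whose proof is nothing more than the equivalence of stacks $(\cA^\Z)^0\iso(\cA^0)^\Z$, a second application of Theorem \ref{th:main proof}, and taking \Log. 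So the paper's route never needs more than one endomorphism type at a time and requires no new point counting, while yours buys a more geometric, self-contained picture at the cost of a second spectral analysis; your reduction step could be replaced verbatim by this swap argument.

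One refinement is needed in your last paragraph. The $t^\bd$-coefficient of $\sH^{\z_1,\z_2}$ is $\vl{C^{\z_1,\z_2}_\bd}/\vl{\Aut(P_\bd)}$, and division by $\vl{\Aut(P_\bd)}$, as well as the factor $\frac1{q-1}$ inside \Exp and \Log, produces elements of $\cV$ that are only given coefficientwise by rational functions of $q$, not by polynomials; so ``polynomial-count'' does not literally pass through these operations in both directions as you assert. What does pass through is ``rational-function-count'', and one recovers polynomiality at the end from integrality: $\vl{C^{\z_1,\z_2}_\bd}$ and $[\sA_\bd]$ are integer-valued sequences, and a rational function taking integer values at all $q^n$ is necessarily a polynomial. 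With this standard adjustment your equivalence argument (and its specialization identifying Conjecture \ref{conj1} with the case $\z_1=\z_2=0$) goes through; note the paper states this part of the theorem without spelling out a proof.
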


For example, for the trivial quiver, we obtain
\begin{equation}
\sH^{a,a}(t)=\Exp\rbr{\frac{q^2}{q-1}\sum_{n\ge1}t^n}
\end{equation}
which is exactly the Feit-Fine formula \eqref{ff1}.
Invariants $[\sA_\bd]$ are analogues of the Kac polynomials \cite{kac_infinite,kac_root} for quiver representations.
Therefore the following conjecture (satisfied in all known examples) is not surprising.

\begin{conjecture}
For any acyclic quiver $Q$ and any $\bd\in\bN^{Q_0}$,
there exists $f_\bd\in\bN[x]$ such that $[\sA_\bd]=[f_\bd]$.
\end{conjecture}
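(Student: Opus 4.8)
\noindent\emph{Towards a proof of this conjecture.}
The plan is to treat $[\sA_\bd]$ as a Kac-type invariant and to establish the two halves of the statement — polynomiality and the positivity $f_\bd\in\bN[x]$ — by the two methods used for Kac's conjecture for quivers \cite{kac_infinite,kac_root}, transported to the present category of pairs. The first step is to make $[\sA_\bd]$ intrinsic. Taking $\z_1=\z_2=0$ in Theorem~\ref{th:main1} and using $[\Z_0]=1$, and noting that $\sH^{0,0}(t)$ has constant term $1$ so that $\Exp$ is invertible, we obtain
\[
\sA(t)=(q-1)\,\Log\bigl(\sH^{0,0}(t)\bigr).
\]
Thus $[\sA_\bd]$ is the groupoid volume of the substack of absolutely indecomposable pairs $(P_\bd,\vi)$ with $\vi$ nilpotent, the latter being the finite-dimensional modules over $R=\bk Q\ts\bk\pser{t}$ (with $t$ acting nilpotently) whose underlying $\bk Q$-module is the fixed projective $P_\bd$. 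This is the setting to which the paper's framework of linear stacks over the \'etale site applies \cite{mozgovoy_poincare}, and in which absolutely indecomposable means indecomposable over $\ubar\bk$.

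For the polynomiality half I would run Kac's integrality argument inside the \la-ring $\cV$. Over each finite extension this category is Krull--Schmidt, so the volume of the substack of indecomposables is the plethystic logarithm $\Log$ of the total volume $[\cM_\bd]$ of all such pairs, and Galois descent over the \'etale site expresses this indecomposable volume through the power (Adams) operations $\psi^n$ applied to the $[\sA_{\bd'}]$, $\bd'\mid\bd$. Inverting this triangular system returns $[\sA_\bd]$ as a $\bQ$-combination of the $\psi^n[\sA_{\bd'}]$ and of the total commuting-pair volumes $[C^{0,0}_{\bd'}]$, integral by the Kac necklace congruence. Because the nilpotent locus of $A_\bd$ is polynomial-count — it fibres over the nilpotent locus of the split semisimple quotient $A_\bd/\rad A_\bd\iso\prod_i M_{d_i}(\bk)$ with fibres $\rad A_\bd$, giving $[A_\bd^0]=q^{\dim\rad A_\bd}\prod_i q^{d_i^2-d_i}$ — the one genuine input is the polynomiality of the $[C^{0,0}_{\bd'}]$. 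By Theorem~\ref{th:main1} this is precisely Conjecture~\ref{conj1}, so the route reorganises the problem without evading the Higman-type difficulty. It does settle the statement unconditionally on the families where the indecomposable $R$-modules are classified — Dynkin and tame $Q$, and the block-triangular algebras $A_\bd$ of the introduction — since there the relevant volumes are explicit and the Galois bookkeeping yields manifestly polynomial $[\sA_\bd]$.

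For the general case, and for the positivity refinement $f_\bd\in\bN[x]$, the plan is to pass to geometry and reproduce the mechanism behind Kac's positivity conjecture. I would realise $[\sA_\bd]$ as the $E$-polynomial of the absolutely indecomposable (equivalently, the $\Gm$-rigid) locus inside the representation variety of $R$-modules of type $\bd$ modulo $\Aut(P_\bd)$; concretely, the compactly supported Euler characteristic of a geometric quotient parametrising stable pairs $(P_\bd,\vi)$. Positivity would then follow from two structural properties of this model: that its compactly supported cohomology is pure of Tate type and concentrated in even degrees, so that $[\sA_\bd]=\sum_k\dim H^{2k}_c\,q^{k}$ with nonnegative integer coefficients. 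Purity is expected because the stratification of $\vi$ by Jordan type, refined over $Q$, should be by affine-bundle-like pieces, as in the Feit--Fine count~\eqref{ff1} of \cite{feit_pairs}; here a cohomological Hall-algebra factorisation of $\sH^{0,0}$, mirroring the $\Exp$-formula of Theorem~\ref{th:main1}, would do the work.

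The hard part is exactly the purity, equivalently the polynomiality, of this representation variety of pairs, i.e. of the fibres of the map $C^{0,0}_\bd\to A_\bd^0$, $(\vi,\psi)\mapsto\vi$. For the trivial quiver this is the Feit--Fine theorem and everything is pure Tate; for general acyclic $Q$ it is equivalent, by Theorem~\ref{th:main1}, to Conjecture~\ref{conj1} and hence to Higman's conjecture for the algebras $A_\bd$. I therefore do not expect an unconditional proof: the realistic deliverable is (i) the two reductions above, which pin the positivity refinement to purity of an explicit variety, and (ii) a complete proof on all representation-finite and tame $Q$, where the polynomial $f_\bd\in\bN[x]$ and its cohomological meaning can be exhibited by hand.
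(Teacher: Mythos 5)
This statement is one the paper deliberately leaves open: it is stated as a conjecture, supported only by the analogy with Kac polynomials and the numerical evidence $\sA_n\in\bN[q]$ for $n\le10$ in \S\ref{sec:examples}, so there is no proof in the paper to compare against. Your proposal, as you concede in your last sentence, does not prove it either, and the central gap is a circularity you half-acknowledge but do not escape. Your ``polynomiality half'' inverts a triangular system whose input is the polynomiality of the volumes $[C^{0,0}_{\bd'}]$; but by Theorem~\ref{th:main1} (which you rederive in the form $\sA(t)=(q-1)\Log\bigl(\sH^{0,0}(t)\bigr)$, and which the paper proves as Theorem~\ref{th:main1 proof}) polynomiality of the $[C^{0,0}_{\bd'}]$ is \emph{equivalent} to polynomiality of all the $[\sA_{\bd'}]$, i.e.\ to Conjecture~\ref{conj1} and to the statement minus positivity. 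So the Galois-descent and M\"obius-inversion bookkeeping (which is exactly Corollary~\ref{cor:abs vs ind} of the paper) produces no polynomiality from anything weaker; it reorganizes the open problem in precisely the way the paper already has. The auxiliary computation $\#A_\bd^0=q^{\dim\rad A_\bd}\prod_i q^{d_i^2-d_i}$ is correct but irrelevant to the obstruction, which sits in the commuting pairs, not in the nilpotent locus itself.

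There is also a concretely false step: the claim that the conjecture follows unconditionally for Dynkin and tame $Q$ because ``the indecomposable $R$-modules are classified there.'' Representation-finiteness of $\bk Q$ does not classify the objects of $\cA^0$: the wildness lives in the endomorphism datum $\vi$, not in $Q$. For the Dynkin quiver of type $A_n$ with $\bd=(1^n)$, the pairs $(P_\bd,\vi)$ with $\vi$ nilpotent are exactly what Higman's conjecture counts ($U_n=1+\End(P_\bd)^0$, see \S\ref{sec:examples}), and by the ``in particular'' clause of Theorem~\ref{th:main1}, polynomiality of $[\sA_{(1^n)}]$ is equivalent to Higman's conjecture --- open for large $n$, with published evidence it may even be false \cite{halasi_number,pak_higmans}. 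So the one family where you claim an unconditional result is the family where the problem is hardest. Finally, the positivity refinement $f_\bd\in\bN[x]$ is a program, not an argument: no moduli space or stability condition is constructed, and the purity plus evenness of cohomology you invoke is strictly \emph{stronger} than the polynomiality you have not established, so even the conditional implications in your sketch point in the wrong direction. If you want a genuinely new angle consistent with the paper, the honest targets are the ones flagged in Remarks~\ref{rem: s method} and \ref{rem:evs result}: adapt Schiffmann's flag-counting method past the failure of heredity for $\cA^0$, or prove polynomial-count for the refined summands $\#\Hom(I_\la,I_\mu)/(A_\la\xx A_\mu)$ there.
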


In order to prove the above theorem, we will count objects in the general context of linear stacks (\cf the notion of a sheaf-category in  \cite{dobrovolska_moduli}).
Assume that, for any finite field extension $K/\bk=\bF_q$, we have an additive $K$-linear category $\cA(K)$ with finite-dimensional $\Hom$-spaces and splitting idempotents.
Assume that for finite fields extensions $L/K/\bk$, we have compatible $K$-linear (restriction) functors $\rho_{L/K}:\cA(K)\to\cA(L)$.
For example, let $Q$ be a quiver and let $\cA(K)=\Rep(Q,K)$ be the category of quiver representations over $K$, with the restriction functors $\rho_{L/K}:M\mto M\ts_KL$.
As a different example, let $X$ be an algebraic variety over \bk and let $\cA(K)=\Coh X_K$, where $X_K=X\xx_{\Spec\bk}\Spec K$, with the restriction functor $\rho_{L/K}:F\mto F\ts_KL$.
We will assume that the above data satisfies certain descent conditions, meaning that it defines a stack over the small \'etale site $\Spec(\bk)_\et$ \S\ref{linear stacks}.

\medskip
Now let us count objects of the stack \cA.
Assume that there is a lattice $\Ga\iso\bZ^n$ and a group homomorphism $\cls:K_0(\cA(K))\to\Ga$, for every finite field extension $K/\bk$, compatible with the restriction functors and such that, for every  $\bd\in\Ga$, there exist only finitely many isomorphism classes of objects $X\in\cA(K)$ with $\cls(X)=\bd$.
%Assume that there exists a closed convex cone $C\sbs\Ga_\bR$ which is pointed ($C\cap(-C)=\set0$) and such that $\cls(X)\in C$ for all $X\in\cA(K)$. This allows us to work with power series counting objects in \cA.
Given a subvariety $\Z\sbs\bA^1$
%(we are mainly interested in $\Z_0=\set0$, $\Z_*=\bA^1\ms\set0$ and $\Z_a=\bA^1$)
and a finite-dimensional $K$-algebra $A$, define $A^\Z\sbs A$
as before.
%elements of type $\Z$
%to be the set of elements $x\in A$ with the minimal polynomial having roots in $\Z(\ubar \bk)$.
Define
\begin{equation}
\label{eq:HZ1}
\sH^\Z_\cA(t)=\sum_{\bd\in\Ga}[H_d^\Z]t^d,\qquad
\sH^\Z_\bd(K)=\sum_{\ov{X\in\cA(K)\qt\sim}{\cls X=\bd}}
\frac{\#\End(X)^\Z}{\#\Aut(X)}.
\end{equation}
One can interpret this series as a series of volumes of a new stack $\cA^Z$ consisting of pairs $(X,\vi)$, where $X$ is an object of \cA and $\vi\in\End(X)^Z$ \S\ref{sec:counting endo}.
Let $\sA_\bd(K)$ be the number of isomorphism classes of absolutely indecomposable objects $X\in\cA(K)$ with $\cls(X)=\bd$ and let $\sA_\cA(t)=\sum_{\bd\in\Ga}[\sA_\bd]t^\bd$ be the corresponding series of volumes.

\begin{theorem}
\label{th:main2}
We have
\begin{equation}
\sH^\Z_\cA(t)=\Exp\rbr{\frac{[\Z]}{q-1}\sA_\cA(t)}.
\end{equation}
\end{theorem}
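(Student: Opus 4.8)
The plan is to reduce the theorem, by Krull--Schmidt together with a cancellation in the endomorphism algebra, to an Euler product over indecomposables, and then to pass from indecomposables over $\bF_{q^n}$ to absolutely indecomposables over its extensions by Galois descent, organising everything by taking $\log$ on both sides. First I would use that each $\cA(K)$ is Krull--Schmidt (finite-dimensional $\Hom$-spaces and splitting idempotents): every object is uniquely $X\iso\bigoplus_{[Y]}Y^{m_Y}$, the sum over iso classes of indecomposables $Y$, with $\End(Y)$ local and residue \emph{field} $K_Y=\End(Y)/\rad\End(Y)$ (a finite extension of $K$, by Wedderburn). The crucial local computation concerns the weight $\#\End(X)^\Z/\#\Aut(X)$. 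Since $\rad\End(X)$ is nilpotent, an element of $\End(X)$ has minimal polynomial with roots in $\Z(\ub K)$ if and only if its image in $S:=\End(X)/\rad\End(X)\iso\prod_{[Y]}M_{m_Y}(K_Y)$ does (the two minimal polynomials have the same roots). Hence both $\End(X)^\Z$ and $\Aut(X)=\End(X)^*$ are full preimages under $\End(X)\onto S$ of $S^\Z=\prod_{[Y]}M_{m_Y}(K_Y)^\Z$ and $S^*=\prod_{[Y]}\GL_{m_Y}(K_Y)$, so the common factor $\#\rad\End(X)$ cancels and
\[
\frac{\#\End(X)^\Z}{\#\Aut(X)}=\prod_{[Y]}\frac{\#M_{m_Y}(K_Y)^\Z}{\#\GL_{m_Y}(K_Y)}.
\]
Summing over all $X$ factorises $\sH^\Z_\cA(t)$, for each $K=\bF_{q^n}$, as the Euler product $\prod_{[Y]}\bigl(\sum_{m\ge0}\#M_m(K_Y)^\Z/\#\GL_m(K_Y)\,t^{m\cls Y}\bigr)$ over indecomposables, well defined $t$-adically by the finiteness hypothesis.

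The single local factor is exactly the vector-space case of the theorem over the field $K_Y$, which I would establish first as a building block by rational canonical form, generalising Fine--Herstein: for any $\bF_Q$,
\[
\log\Bigl(\sum_{m\ge0}\frac{\#M_m(\bF_Q)^\Z}{\#\GL_m(\bF_Q)}s^m\Bigr)=\sum_{k\ge1}\frac1k\,\frac{\#\Z(\bF_{Q^k})}{Q^k-1}\,s^k,
\]
the right-hand side being $\log\Exp\bigl(\tfrac{[\Z]}{q-1}s\bigr)$ evaluated over $\bF_Q$. Applying this with $\bF_Q=K_Y$, $s=t^{\cls Y}$ and summing the logarithm of the Euler product gives, over $K=\bF_{q^n}$ with $r(Y)=[K_Y:K]$,
\[
\log\sH^\Z_\cA(t)\big|_{\bF_{q^n}}=\sum_{[Y]}\sum_{k\ge1}\frac1k\,\frac{\#\Z(\bF_{q^{n r(Y)k}})}{q^{n r(Y)k}-1}\,t^{k\cls Y}.
\]
On the other hand, unfolding $\Exp$ through the Adams operations $\psi^r$ of the volume $\la$-ring (which act by $\bF_{q^a}\mapsto\bF_{q^{ra}}$) expresses $\log$ of the right-hand side of the theorem, over $\bF_{q^n}$, as $\sum_{r\ge1}\sum_\bd\frac1r\frac{\#\Z(\bF_{q^{nr}})}{q^{nr}-1}\sA_\bd(\bF_{q^{nr}})\,t^{r\bd}$. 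Comparing the coefficients of each monomial $t^\gamma$ and of each weight $\#\Z(\bF_{q^{nm}})/(q^{nm}-1)$ then reduces the theorem to the numerical identity
\[
\sA_\bd(\bF_{q^{nm}})=\sum_{d\mid m}d\cdot\#\{[Y]\in\cI(\bF_{q^n}):r(Y)=d,\ \cls Y=d\bd\},\qquad m\ge1,\ \bd\in\Ga,
\]
where $\cI(\bF_{q^n})$ denotes the set of iso classes of indecomposables over $\bF_{q^n}$.

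The last identity is where I expect the real work to lie, and I would prove it by Galois descent for the stack $\cA$. An indecomposable $Y$ over $\bF_{q^n}$ with $r(Y)=d$ becomes, after base change to $K_Y=\bF_{q^{nd}}$, a direct sum of $d$ pairwise non-isomorphic absolutely indecomposable objects of class $\cls(Y)/d$ forming a single free $\Gal(\bF_{q^{nd}}/\bF_{q^n})$-orbit under the Frobenius twist; conversely each such orbit descends to a unique indecomposable over $\bF_{q^n}$. Thus indecomposables over $\bF_{q^n}$ with $r(Y)=d$ and $\cls Y=d\bd$ correspond bijectively to size-$d$ Frobenius orbits of absolutely indecomposables of class $\bd$, while $\sA_\bd(\bF_{q^{nm}})$ counts absolutely indecomposables of class $\bd$ fixed by $\mathrm{Frob}^{nm}$, i.e.\ those lying in orbits of size $d\mid m$, each orbit contributing its $d$ points; this yields the displayed identity. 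The delicate points are to invoke the assumed \'etale descent data defining $\cA$ in order to match $\Gal$-orbits of objects over extension fields with objects over the base, and to identify the endomorphism residue degree $r(Y)$ with the size of the corresponding Frobenius orbit — both of which require that base change along $\bF_{q^n}\to\bF_{q^{nd}}$ be compatible with the idempotent and endomorphism-algebra structure. Substituting this back into the coefficient comparison completes the proof.
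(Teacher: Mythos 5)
Your proposal is correct and follows essentially the same route as the paper: the radical-cancellation Euler product over indecomposables via Krull--Schmidt, the vector-space (Fine--Herstein type) building block which is the paper's Theorem \ref{th:vect spaces}, and the Galois-descent identity $\psi_m[\sA_\bd]=\sum_{d\mid m}d\,[\sI_{d\bd,d}]$, which is the paper's Corollary \ref{cor:abs vs ind}, established there via Theorem \ref{th:splitting} and Lemma \ref{lm:descent}. The only difference is bookkeeping: where you take componentwise logarithms and compare coefficients, the paper packages the identical manipulation through the power structure $\Pow(a,b)=\prod_{r\ge1}\psi_r(a)^{b_r}$ of Theorem \ref{power}.
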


The above statement in the case of quiver representations and $\Z=\bA^1\ms\set0$ or $\Z=\set0$ goes back to Hua \cite{hua_counting} (see \cite{mozgovoy_computational} for the plethystic formulation of these results).
Other cases of the above result can be found in
\cite{mozgovoy_poincare,mozgovoy_motivicb,
schiffmann_indecomposable,mozgovoy_countinga,bozec_number}.

%Going back to the setting of Theorem \ref{th:main1}, we can interpret $\sH^{s_1,s_2}(t)$ as the series of volumes of the stack $\cA^{s_1,s_2}$ consisting of triples $(P,\vi,\te)$, where $P$ is a projective quiver representation and $\vi\in\End(P)^{s_1}$, $\te\in\End(P)^{s_2}$ commute with each other \S\ref{sec:comm1}.
%It is tempting to reduce the computation of the volumes of $\cA^{0,0}$ to the counting of flags in the stack $\cB=\cA^0$ (consisting of pairs $(P,\vi)$, where $P$ is projective and $\vi\in\End(P)$ is nilpotent) using the method of Schiffmann
%\cite{schiffmann_indecomposable,mozgovoy_countinga}.
%But apparently one needs \cB to be abelian and hereditary for this method to work (\cf Remark \ref{rem: s method}).

\medskip
The paper is organized as follows.
In \S\ref{prelim} we recall preliminary material on \la-rings and plethystic operations, the volume ring, Krull-Schmidt categories and the radical of a category. 
In \S\ref{linear stacks} we introduce linear stacks and study descent properties of indecomposable and absolutely indecomposable objects of such stacks.
In \S\ref{sec:volumes of stacks} we prove our main result on the volumes of linear stacks (\cf Theorem \ref{th:main2}).
In \S\ref{sec:comm1} we apply this result to the case of commuting varieties and prove Theorem \ref{th:main1}.
We discuss possible approaches to Higman's conjecture, based on the developed techniques, in Remarks \ref{rem: s method} and \ref{rem:evs result}.
In \S\ref{sec:examples} we provide some examples of the computation of invariants $[\sA_\bd]$ introduced in Theorem \ref{th:main1}.

\medskip
I would like to thank Markus Reineke for pointing my attention to Higman's conjecture and to thank Francis Brown for several useful discussions.
I learned with great sadness about the death of 
%the young bright mathematician 
Anton Evseev who did so many contributions to questions related to Higman's conjecture.

\section{Preliminaries}
\label{prelim}

\subsection{\tpdf{\la}{Lambda}-rings and plethystic operations}
\label{lambda}
For simplicity we will introduce only \la-rings without \bZ-torsion.
To make things even simpler we can assume that our rings are algebras over~\bQ.
The reason is that in this case the axioms of a \la-ring can be formulated in terms of Adams operations.
For more details on \la-rings see \cite{getzler_mixed,mozgovoy_computational}.
Define the graded ring of symmetric polynomials
\begin{equation}
\La_n=\bZ[x_1,\dots,x_n]^{S_n},
\end{equation}
where $\deg x_i=1$.
Define the ring of symmetric functions
$\La=\ilim\La_n,$
where the limit is taken in the category of graded rings.
For any commutative ring $R$, define $\La_R=\La\ts_\bZ R$.
As in \mac, define generators of $\La$ (complete symmetric and elementary symmetric functions)
$$
h_n=\sum_{i_1\le\dots\le i_n}x_{i_1}\dots x_{i_n},\qquad
e_n=\sum_{i_1<\dots<i_n}x_{i_1}\dots x_{i_n},\qquad
$$
and generators of $\La_\bQ$ (power sums)
$$p_n=\sum_{i}x_i^n.$$
The elements $h_n,e_n,p_n$ have degree $n$.
We also define $h_0=e_0=p_0=1$ for convenience.
%For any partition \la of length $\le n$,
%define monomial symmetric polynomials
%$m_\la=\sum x^\al\in\La_n$, where the sum runs over all distinct permutations $\al=(\al_1,\dots,\al_n)$ of $(\la_1,\dots,\la_n)$.
%They induce monomial symmetric functions $m_\la\in\La$ which form a \bZ-basis of $\La$.

A \la-ring $R$ is a commutative ring equipped with a pairing, called plethysm,
$$\La\xx R\to R,\qquad (f,a)\mto f\circ a=f[a]$$
such that with $\psi_n=p_n[-]:R\to R$, called Adams operations, we have
\begin{enumerate}
\item The map $\La\to R,\, f\mto f[a]$, is a ring homomorphism, for all $a\in R$.
\item $\psi_1:R\to R$ is an identity map.
\item The map $\psi_n:R\to R$ is a ring homomorphism,
for all $n\ge1$.
\item $\psi_m\psi_n=\psi_{mn}$, for all $m,n\ge1$.
\end{enumerate}

\begin{remark}\br
\begin{enumerate}
\item
The first axiom implies that it is enough to specify just Adams operations $\psi_n$ or \si-operations $\si_n=h_n[-]$ or \la-operations $\la_n=e_n[-]$.
It also implies that $1[a]=1$, for all $a\in R$.

\item
We equip algebras of the form 
$\bQ[x_1,\dots,x_k]$, $\bQ\pser{x_1,\dots,x_k}$
with the \la-ring structure% by the formula
$$\psi_n(f(x_1,\dots,x_k))=f(x_1^n,\dots,x_k^n).$$

\item
Similarly, given a \la-ring $R$, we equip algebras $R[t]$ and $R\pser t$ with the \la-ring structure
$$\psi_n\rbra{\sum_{i\ge0} a_it^i}=\sum_{i\ge0}\psi_n(a_i)t^{in}.$$
\item
The ring \La can be itself equipped with the \la-ring structure using the same formula
$$
\psi_m(f)=f(x_1^m,x_2^m,\dots),\qquad f\in\La.
$$
In particular $p_m[p_n]=p_{mn}$.
\item
If $R$ is a \la-ring, then $f\circ(g\circ a)=(f\circ g)\circ a$ for all $f,g\in\La$ and $a\in R$.
\end{enumerate}
\end{remark}

%The ring \La can be considered as a free \la-ring with one generator in the following sense.
%Consider the category $\lrg$ of \la-rings (with morphisms that respect plethystic operations).
%The forgetful functor $F:\lrg\to\Set$
%has a left adjoint
%$$\Sym:\Set\to\lrg.$$
%%Following tradition, for 
%Given a finite set $\set{X_1,\dots,X_n}$, we denote 
%\Sym\set{X_1,\dots,X_n} by $\Sym[X_1,\dots,X_n]$.
%Then, for a one-point set $\set{X}$, there is an isomorphism of \la-rings
%$$\Sym[X]\iso\La,\qquad X\mto p_1.$$
%We will usually identify \La and $\Sym[X]$ using this isomorphism.

Define a filtered \la-ring $R$ to be a \la-ring equipped with a filtration by ideals
$$R=F^0R\sps F^1R\sps\dots$$
such that $F^iR\cdot F^jR\sbs F^{i+j}R$ and $\psi_n(F^iR)\sbs F^{ni}R$.
It is called complete if the natural homomorphism $R\to\ilim R/F^iR$ is an isomorphism.
%For example, the ring \La is graded, where $\deg h_n=n$.
%Hence we have a decomposition $\La=\bop_{k\ge0}\La^k$ 
%into graded components.
%We equip $\La$ with the filtration $F^k\La=\bop_{i\ge k}\La^i$ and define the completion 
%\begin{equation}
%\hat\La=\ilim\La/F^k\La\iso\bZ\pser{h_1,h_2,\dots}.
%\end{equation}
%This ring can be considered as a free complete \la-ring with one generator.
Given a complete \la-ring $R$,
define the (usual) exponential and logarithm
\begin{equation}
\exp:F^1R\to 1+F^1R,\qquad a\mto\sum_{n\ge0}\frac {a^n}{n!},
\end{equation}
\begin{equation}
\log:1+F^1R\to F^1R,\qquad 1+a\mto\sum_{n\ge1}(-1)^{n-1}\frac {a^n}{n}.
\end{equation}
Define the exponentiation operation
\begin{equation}
(1+F^1R)\xx R\mto 1+F^1R,\qquad (a,b)\mto a^b=\exp(b\log(a)).
\end{equation}
%One can see that if $R$ is a complete \la-ring then the plethystic pairing extends to
%$$\hat\La\xx F^1R\to R.$$
%In particular, the element
Define the plethystic exponential
\begin{equation}
\Exp:F^1R\to 1+F^1R,\qquad a\mto\sum_{n\ge0}h_n[a]=\exp\rbr{\sum_{n\ge1}\frac{\psi_n(a)}n}.
%=\prod_{i\ge1}\frac1{1-x_i}\in\hat\La,
\end{equation}
It satisfies
\begin{equation}
\Exp(a+b)=\Exp(a)\Exp(b).
\end{equation}
If $t\in F^1R$ satisfies $\psi_n(t)=t^n$, for all $n\ge1$ (we say that $t$ is linear in this case), then
\begin{equation}
\Exp(t)=\sum_{n\ge0}t^n=\frac1{1-t},
\end{equation}
where the last expression means the exponentiation $(1-t)\inv$.
Define the plethystic logarithm to be the inverse of \Exp
\begin{equation}
\Log:1+F^1R\to F^1R,\qquad
1+a\mto\sum_{n\ge1}\frac{\mu(n)}{n}\psi_n(\log(1+a)).
\end{equation}
Define the plethystic exponentiation (or power structure)
\begin{equation}
(1+F^1R)\xx R\mto 1+F^1R,\qquad (a,b)\mto \Pow(a,b)=\Exp(b\Log(a)).
\end{equation}
The following result was proved in \cite[Appendix]{mozgovoy_computational}:

\begin{theorem}
\label{power}
Let $R$ be a complete \la-ring and $a\in 1+F^1R$, $b\in R$.
Let $b_n\in R$  satisfy $\psi_n(b)=\sum_{r\mid n}rb_r$,
for all $n\ge1$.
Then
\begin{equation}
\Pow(a,b)=\prod_{r\ge1}\psi_r(a)^{b_r}.
\end{equation}
\end{theorem}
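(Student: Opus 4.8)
The plan is to collapse everything onto the single series $\sum_{n\ge1}\psi_n(\cdot)/n$ that underlies both $\Exp$ and the exponentiation $\psi_r(a)^{b_r}$. First I would set $c=\Log(a)\in F^1R$, so that $a=\Exp(c)$ and, directly from the definitions, $\Pow(a,b)=\Exp(bc)=\exp\rbr{\sum_{n\ge1}\frac{\psi_n(bc)}{n}}$. Since each $\psi_n$ is a ring homomorphism (axiom (3)), we have $\psi_n(bc)=\psi_n(b)\,\psi_n(c)$, so the exponent becomes $\sum_{n\ge1}\frac{\psi_n(b)\,\psi_n(c)}{n}$.

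Next I would feed in the hypothesis $\psi_n(b)=\sum_{r\mid n}rb_r$ and reorganize the resulting double sum by writing $n=rm$. Using $\psi_{rm}=\psi_m\psi_r$ (axiom (4)), the exponent factors as
\[
\sum_{n\ge1}\frac{\psi_n(b)\,\psi_n(c)}{n}
=\sum_{r\ge1}b_r\sum_{m\ge1}\frac{\psi_m\rbr{\psi_r(c)}}{m},
\]
and the inner sum is precisely $\log\Exp\rbr{\psi_r(c)}$, by the defining formula $\Exp(d)=\exp\rbr{\sum_{m\ge1}\psi_m(d)/m}$ and $\log\circ\exp=\operatorname{id}$ on $F^1R$.

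The crux is then the identity $\Exp\rbr{\psi_r(c)}=\psi_r\rbr{\Exp(c)}=\psi_r(a)$, i.e.\ that each Adams operation commutes with the plethystic exponential; this reduces to comparing the two exponents $\sum_{m\ge1}\psi_{mr}(c)/m$ and $\sum_{m\ge1}\psi_{rm}(c)/m$, which coincide after relabelling. Granting this, the exponent equals $\sum_{r\ge1}b_r\log\psi_r(a)$, and applying $\exp$ (which converts a sum into a product) yields $\Pow(a,b)=\prod_{r\ge1}\exp\rbr{b_r\log\psi_r(a)}=\prod_{r\ge1}\psi_r(a)^{b_r}$, as claimed. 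Here each factor is well defined because $\psi_r(1)=1$ and $\psi_r(F^1R)\sbs F^rR$ give $\psi_r(a)\in 1+F^1R$, and in fact $\log\psi_r(a)\in F^rR$, so the infinite product converges in the complete ring.

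The main obstacle — indeed the only nonformal point — is justifying these manipulations inside the complete filtered \la-ring: the convergence of the series, the legitimacy of interchanging the order of the double summation, and above all that $\psi_r$ commutes with the analytic $\exp$ and $\log$. I would settle the last point by observing that $\psi_r$ is a \bQ-algebra homomorphism with $\psi_r(F^iR)\sbs F^{ri}R$, hence continuous for the filtration topology; applying it termwise to $\exp(x)=\sum_{k\ge0}x^k/k!$ with $x\in F^1R$ gives $\psi_r(\exp x)=\exp(\psi_r x)$ by completeness, and symmetrically for $\log$. The reindexing $n\mapsto(r,m)$ is then legitimate because every partial-sum truncation differs from the full sum by an element of arbitrarily high filtration degree.
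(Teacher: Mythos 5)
Your proof is correct. Note that the paper does not actually prove Theorem~\ref{power}: it quotes the result from the appendix of \cite{mozgovoy_computational}, so there is no internal argument to compare against. Your route — setting $c=\Log(a)$, expanding $\Pow(a,b)=\Exp(bc)=\exp\rbr{\sum_{n\ge1}\psi_n(b)\psi_n(c)/n}$, substituting $\psi_n(b)=\sum_{r\mid n}rb_r$, reindexing $n=rm$ via $\psi_{rm}=\psi_m\psi_r$, and identifying the inner sum as $\log\psi_r(a)$ through the commutation $\Exp(\psi_r(c))=\psi_r(\Exp(c))$ — is the natural unwinding of the definitions and is essentially the argument of the cited appendix. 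The two genuinely delicate points (that $\psi_r$, being a ring homomorphism of $\bQ$-algebras with $\psi_r(F^iR)\sbs F^{ri}R$, is continuous and hence commutes with $\exp$ and $\log$; and that the reindexed double sum and final infinite product converge because $b_r\log\psi_r(a)\in F^rR$) are exactly the ones you single out, and your justifications for both are adequate.
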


Let us formulate the $q$-binomial theorem in terms of plethystic operations.
Recall that one defines the $q$-Pochhammer symbol $(a;q)_n=\prod_{i=0}^{n-1}(1-aq^i)$, for $n\ge0$ (or $n=\infty$).
Then the $q$-binomial theorem states
\begin{equation}
\sum_{n\ge0}\frac{(a;q)_n}{(q;q)_n}t^n
=\frac{(at;q)_\infty}{(t;q)_\infty}.
\end{equation}
If $a,t,q\in R$ are linear and $t\in F^1R$,
%then
%\begin{equation}
%\frac1{(t;q)_\infty}
%=\prod_{i\ge0}\frac1{1-tq^i}
%=\prod_{i\ge0}\Exp(tq^i)=\Exp\rbr{\frac t{1-q}}.
%\end{equation}
%Therefore the $q$-binomial theorem can be written as
we can write the above equation as
\begin{equation}
\sum_{n\ge0}\frac{(a;q)_n}{(q;q)_n}t^n
=\Exp\rbr{\frac{1-a}{1-q}t}.
\end{equation}
%In particular, with $a=0$ we obtain
%\begin{equation}
%\sum_{n\ge0}\frac{t^n}{(q;q)_n}
%=\Exp\rbr{\frac{t}{1-q}}.
%\end{equation}

\subsection{Volume ring}
\label{volume}
Following \cite{mozgovoy_poincare},
we will introduce 
in this section a \la-ring which is an analogue of the Grothendieck ring of algebraic varieties or the ring of motives.
We define it to be the ring $\cV=\prod_{n\ge1}\bQ$ with Adams operations
\begin{equation}
\psi_m(a)=(a_{mn})_{n\ge1},\qquad a=(a_n)_{n\ge1}\in \cV,
\end{equation}
and call it the volume ring or the ring of counting sequences \cite{mozgovoy_poincare}. 
%We also define $\cV_\bC=\prod_{n\ge1}\bC$.

Let us fix a finite field $\bk=\bF_q$.
Given an algebraic variety $X$ over $\bk$
(or given a family of finite sets $X(K)$, for finite field extensions $K/\bk$),
define its volume
\begin{equation}
[X]=(\# X(\bF_{q^n}))_{n\ge1}\in \cV.
\end{equation}
More generally, given a finite type algebraic stack $\cX$ over $\bk$, define its volume
\begin{equation}
[\cX]=(\# \cX(\bF_{q^n}))_{n\ge1}\in \cV,
\end{equation}
where we define, for a finite groupoid $\cG=\cX(\bF_{q^n})$,
\begin{equation}
\#\cG=\sum_{x\in\cG\qt\sim}\frac1{\#\Aut(x)}.
\end{equation}

Consider the algebra $\bQ[\bq]$, where \bq is a variable, equipped with the usual \la-ring structure $\psi_n(f(\bq))=f(\bq^n)$.
There is an injective \la-ring homomorphism
$$\bQ[\bq]\to\cV,\qquad  f\mto [f]=(f(q^n))_{n\ge1}.$$
An element $a\in\cV$ is called polynomial-count if it is contained in the image of $\bQ[\bq]$.
We will usually identify $\bQ[\bq]$ with its image in $\cV$.
The element $[\bA^1]=(q^n)_{n\ge1}=\bq$ is called the Lefschetz volume.
In what follows we will write $q$ instead of $\bq$, hoping it will not lead to any confusion.

\subsection{Radical of a category}
\label{sec:rad}

%\subsubsection{}
Let $A$ be a ring (or an algebra over a field $K$).
Its (Jacobson) radical $\rad A$ is defined to be the intersection of maximal left ideals of $A$.
%One knows that

\begin{theorem}
For any ring $A$, we have
\begin{enumerate}
\item  $\rad A$ is a two-sided ideal.
\item $\rad A=\sets{a\in A}{1-ab\text{ is invertible }\forall b\in A}$.
\item $\rad A=\sets{a\in A}{1-ba\text{ is invertible }\forall b\in A}$.
\end{enumerate}
\end{theorem}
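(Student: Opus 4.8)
The plan is to prove the three characterizations of $\rad A$ in a cycle, establishing that the set
$$
R = \sets{a\in A}{1-ba\text{ is invertible }\forall b\in A}
$$
coincides with the intersection of maximal left ideals, and then deducing the symmetric left-right statements from this. First I would recall the standard module-theoretic reformulation: $\rad A = \bigcap_{M} \Ann_A(M)$, where $M$ ranges over all simple left $A$-modules. This follows because the maximal left ideals are exactly the annihilators of cyclic simple modules $A/\fm$, and the annihilator of a simple module is the intersection of the annihilators of its (necessarily cyclic, or zero) elements. This immediately yields claim (1): each $\Ann_A(M)$ is a two-sided ideal, since if $a M = 0$ then $(ba)M \sbs b(aM)=0$, so the intersection $\rad A$ is two-sided.

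For the equivalence with (3), I would argue both inclusions. Suppose $a\in\rad A$ lies in every maximal left ideal. If some $1-ba$ were not left-invertible, it would sit inside a maximal left ideal $\fm$; but then $ba\in\fm$ forces $1\in\fm$, a contradiction, so $1-ba$ has a left inverse $u$ with $u(1-ba)=1$. Writing $u = 1 + u b a$ and noting $ba\in\rad A$ (a two-sided ideal), one sees $u$ itself is of the form $1 - c a$, hence left-invertible by the same argument, which makes $u$ a two-sided inverse and so $1-ba$ is invertible. Conversely, if $a\notin\rad A$, choose a maximal left ideal $\fm$ with $a\notin\fm$; then $\fm + Aa = A$, so $1 = m + ba$ for some $m\in\fm$, $b\in A$, giving $1-ba = m\in\fm$, which cannot be invertible. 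This proves $R=\rad A$, i.e.\ claim (3). The main obstacle here is the bootstrapping step showing that a one-sided inverse of $1-ba$ is automatically two-sided; the trick is precisely to re-apply the annihilator characterization to the correction term, which works because $\rad A$ is two-sided.

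It remains to deduce claim (2), the right-handed version with $1-ab$. The key observation is the elementary invertibility lemma: for any $x,y\in A$, the element $1-xy$ is invertible if and only if $1-yx$ is invertible, via the explicit formula $(1-yx)\inv = 1 + y(1-xy)\inv x$. Applying this with $x=a$ and $y=b$ ranging over all of $A$, the condition ``$1-ab$ invertible for all $b$'' is equivalent to ``$1-ba$ invertible for all $b$'', so the set in (2) equals the set $R$ in (3), which we have already identified with $\rad A$. This closes the cycle. I would present (1) first as a consequence of the module description, then prove $R=\rad A$ for (3), and finally invoke the swap lemma for (2); the only genuinely delicate point throughout is the self-improving inverse argument in (3), with everything else being formal.
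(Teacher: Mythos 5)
Your proof is correct, but there is no proof in the paper to compare it against: the paper states this theorem in the preliminaries (\S\ref{sec:rad}) as standard background material, without proof and without a specific citation, implicitly deferring to textbook sources such as Assem--Simson--Skowro\'nski. Your argument is the classical one and all three steps hold up: the description $\rad A=\bigcap_M\Ann_A(M)$ over simple left modules yields two-sidedness in (1); the maximal-ideal argument plus the bootstrap for (3) is sound (a left inverse $u$ of $1-ba$ satisfies $u=1+uba=1-(-ub)a$, hence is itself left invertible by the same argument, which forces $u$ and therefore $1-ba$ to be two-sided invertible); and Jacobson's swap lemma $(1-yx)^{-1}=1+y(1-xy)^{-1}x$ correctly reduces (2) to (3). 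One phrasing slip is worth repairing: maximal left ideals are not ``the annihilators of cyclic simple modules $A/\mathfrak{m}$'' --- the annihilator of the module $A/\mathfrak{m}$ is the largest two-sided ideal contained in $\mathfrak{m}$, which is in general strictly smaller than $\mathfrak{m}$. What you need, and what your subsequent element-wise sentence actually uses, is that maximal left ideals are exactly the annihilators of nonzero elements (equivalently, generators) of simple modules, so the proof itself is unaffected.
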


\begin{theorem}
For any finite-dimensional algebra $A$ over a field $K$, we have
\begin{enumerate}
\item $\rad A$ is nilpotent \assem[1.2.3].
\item An element $x\in A$ is invertible \iff its image in $A\qt\rad A$ is invertible.
\item Idempotents of $A\qt\rad A$ can be lifted to $A$ \assem[1.4.4].
\item If $L/K$ is a separable field extension, then $\rad (A\ts_KL)\iso(\rad A)\ts_KL$
\cite[Prop.~7.2.3]{bourbaki_algebre}.
\end{enumerate}
\end{theorem}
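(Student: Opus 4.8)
The plan is to treat the four items in order, using the characterizations of $\rad A$ from the preceding theorem together with the finite-dimensionality of $A$. For (1), I would examine the descending chain of two-sided ideals $\rad A\sps(\rad A)^2\sps\dots$, which must stabilize since $A$ is finite-dimensional over $K$; thus $I:=(\rad A)^n$ satisfies $I\cdot\rad A=I$ for some $n$. Nakayama's lemma then forces $I=0$: if $I\ne0$, a minimal right-generating set $m_1,\dots,m_k$ gives $m_k=\sum_i m_ir_i$ with $r_i\in\rad A$, and invertibility of $1-r_k$ (characterization (2) of the preceding theorem) expresses $m_k$ through the remaining generators, contradicting minimality. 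Hence $(\rad A)^n=0$.

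For (2), the forward implication is clear because $A\to A\qt\rad A$ is a ring homomorphism; conversely, lifting an inverse of the image of $x$ to some $y\in A$ yields $xy=1-r$ and $yx=1-r'$ with $r,r'\in\rad A$, and since $1-r$, $1-r'$ are both invertible, $x$ acquires a left and a right inverse and is therefore invertible. For (3) I would exploit the nilpotency just established. Lift an idempotent $\ub e\in A\qt\rad A$ to $a\in A$, so $n:=a^2-a\in\rad A$; the assignment $a\mto f(a)=3a^2-2a^3$ satisfies $f(a)-a=(a^2-a)(1-2a)$ and $f(a)^2-f(a)=(a^2-a)^2\bigl(4(a^2-a)-3\bigr)$, so iterating $f$ doubles the power of $\rad A$ containing $a^2-a$ while keeping $f(a)\equiv a$ modulo $\rad A$. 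Since $(\rad A)^m=0$, the iteration terminates at a genuine idempotent lifting $\ub e$.

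Part (4) is the substantive one. For any extension the inclusion $(\rad A)\ts_KL\sbs\rad(A\ts_KL)$ is immediate, since $(\rad A)^n=0$ makes $(\rad A)\ts_KL$ a nilpotent ideal. The reverse inclusion is equivalent to the semisimplicity of $(A\ts_KL)\qt\bigl((\rad A)\ts_KL\bigr)\iso(A\qt\rad A)\ts_KL$, and this is exactly where separability of $L/K$ must be used. Writing the semisimple algebra $A\qt\rad A$ as $\prod_iM_{n_i}(D_i)$ with $D_i$ a division algebra of centre $E_i$, it suffices that each $D_i\ts_KL$ be semisimple. The crux is the lemma that separability of $L/K$ — reducedness of $L\ts_K\ub K$ — forces $M\ts_KL$ to be reduced for every field extension $M/K$: one embeds $M\ts_KL\emb\ub M\ts_KL=\ub M\ts_{\ub K}(\ub K\ts_KL)$ and uses that $\ub K\ts_KL$ is reduced over the perfect field $\ub K$, hence geometrically reduced, so the base change to $\ub M$ stays reduced. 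Taking $M=E_i$ then makes $E_i\ts_KL$ a finite product of fields $F_j$, each a field extension of $E_i$, whence $D_i\ts_KL=\prod_jD_i\ts_{E_i}F_j$ is a product of central simple algebras and so semisimple. The main obstacle is precisely this step: both inclusions are formal, but the reverse one genuinely consumes the separability hypothesis, and the cleanest way to deploy it is the reduction to geometric reducedness over $\ub K$, after which the central-simple-algebra bookkeeping is routine.
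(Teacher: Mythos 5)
Your proof is correct, but there is nothing in the paper to compare it against line by line: the paper offers no proof of this theorem and instead cites the standard references --- Assem--Simson--Skowro\'nski (1.2.3 and 1.4.4) for items (1) and (3), and Bourbaki (Prop.~7.2.3) for item (4). Your arguments are essentially the ones found in those sources: the stabilizing chain $\rad A\sps(\rad A)^2\sps\dots$ plus Nakayama for (1); lifting an inverse and using that $1+\rad A$ consists of units for (2); the iteration $a\mto 3a^2-2a^3$ for (3), whose two identities you state correctly and which doubles the power of $\rad A$ containing $a^2-a$; and for (4) the reduction modulo the nilpotent ideal $(\rad A)\ts_KL$, the Wedderburn decomposition of $A\qt\rad A$, and the passage to the centers $E_i$ of the division algebras. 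Your treatment of (4) is the genuinely substantive part and is handled correctly: since the centers $E_i$ need not be separable over $K$, one cannot argue via a primitive element of $E_i$; instead the separability of $L/K$ must be converted, as you do, into the statement that $M\ts_KL$ is reduced for every field extension $M/K$ (via reducedness of $\ub K\ts_KL$ over the perfect field $\ub K$ and base change to $\ub M$), after which $E_i\ts_KL$ is a finite product of fields and $D_i\ts_KL$ a product of central simple algebras, hence semisimple. This is precisely Bourbaki's line of reasoning, so your proposal can be viewed as supplying the proofs that the paper delegates to the literature; it is worth noting that in the paper's actual applications (linear stacks over finite fields) all fields involved are finite, hence perfect, and all division algebras are commutative by Wedderburn's little theorem, so only a much easier special case of (4) is ever needed there.
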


A ring $A$ is called local if it has a unique maximal left (or right) ideal.

\begin{theorem}[]
For a finite-dimensional algebra $A$, the following are equivalent \assem[1.4.6]
\begin{enumerate}
\item $A$ is local.
\item The only idempotents of $A$ are $0,1$.
\item $A\qt\rad A$ is a division algebra.
\end{enumerate}
\end{theorem}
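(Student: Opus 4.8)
The plan is to make condition~(3) the hub and prove the two equivalences $(1)\iff(3)$ and $(2)\iff(3)$, so that everything is phrased in terms of the quotient $B=A\qt\rad A$. Throughout I will use freely the facts already recorded: that $\rad A$ is the intersection of the maximal left ideals, that it is nilpotent, that an element of $A$ is invertible iff its image in $B$ is, and that idempotents of $B$ lift to $A$.

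The implications touching (1) are immediate from the description of $\rad A$ as an intersection of maximal left ideals. For $(1)\imp(3)$: if $A$ has a unique maximal left ideal $\fm$ then $\rad A=\fm$, so every proper left ideal lies in $\fm$ and $B$ has no nonzero proper left ideal; hence every nonzero element of $B$ is left-invertible, and the cancellation $yx=1,\ zy=1\imp z=x$ upgrades this to genuine invertibility, so $B$ is a division algebra. For $(3)\imp(1)$: if $B$ is a division algebra its only maximal left ideal is $0$, and since every maximal left ideal of $A$ contains $\rad A$ and corresponds to one of $B$, the ideal $\rad A$ is the unique maximal left ideal of $A$. Finally $(3)\imp(2)$ is short: an idempotent $e\in A$ maps to an idempotent of the division algebra $B$, hence to $0$ or $1$, so $e\in\rad A$ or $1-e\in\rad A$; but an idempotent lying in the nilpotent ideal $\rad A$ must vanish, giving $e\in\set{0,1}$.

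The substance is $(2)\imp(3)$, and I expect this to be the main obstacle. Here I would show directly that every $x\notin\rad A$ is invertible in $A$, equivalently that every nonzero element of $B$ is invertible. If $x\notin\rad A$, the characterization $\rad A=\sets{a}{1-ab\text{ invertible }\forall b}$ supplies some $b$ with $c:=xb$ such that $1-c$ is \emph{not} invertible. Now apply Fitting's lemma to the $K$-linear endomorphism $L_c\col A\to A$, $a\mto ca$: for $N\gg0$ one gets $A=\Ker(L_c^N)\oplus\Im(L_c^N)$, a decomposition of $A$ as a right $A$-module into two right ideals, hence a pair of orthogonal idempotents summing to $1$. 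By~(2) one summand is $0$; it cannot be $\Im(L_c^N)=c^NA$, since that would make $c$ nilpotent and then $1-c$ invertible, contrary to the choice of $b$. Therefore $\Ker(L_c^N)=0$, so $L_c$ is injective hence bijective, and $c=xb$ has a right inverse; a final cancellation (as in $(1)\imp(3)$) promotes this to a two-sided inverse of $x$, completing the step.

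The delicate point, and the reason $(2)\imp(3)$ is the crux, is excluding nonzero nilpotents: the Fitting decomposition on an arbitrary element only separates an invertible part from a nilpotent part, and naively one must still rule out the nilpotent alternative. The device that makes this clean is to run the argument not on $x$ itself but on the element $c=xb$ produced by the radical characterization, since the non-invertibility of $1-c$ is exactly what forbids $c$ from being nilpotent. This is where the hypothesis $x\notin\rad A$ is genuinely used, and it lets me reach the division-algebra conclusion without invoking the full Wedderburn structure theory for $B$.
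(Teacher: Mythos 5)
Your argument is correct, but note that the paper offers no proof of this statement at all --- it is quoted as background from \assem[1.4.6] --- so the natural comparison is with the standard argument of that reference, whose ingredients the paper records nearby (nilpotency of $\rad A$, lifting of idempotents, and, in the remark on splitting algebras, Wedderburn--Artin). Your treatment of the easy implications matches that standard argument: $(1)\iff(3)$ via the correspondence between maximal left ideals of $A$ (all of which contain $\rad A$) and those of $A\qt\rad A$, and $(3)\imp(2)$ because an idempotent mapping to $0$ or $1$ in $A\qt\rad A$ differs from $0$ or $1$ by an idempotent lying in a nilpotent ideal, hence vanishing. The genuine divergence is in $(2)\imp(3)$: the textbook route lifts idempotents to see that the semisimple algebra $A\qt\rad A$ has no nontrivial idempotents and then invokes Wedderburn--Artin to conclude it is a division algebra, whereas you avoid both tools by combining Fitting's lemma with the characterization $\rad A=\sets{a\in A}{1-ab\text{ is invertible }\forall b\in A}$. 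Your choice to run Fitting on $c=xb$ with $1-c$ non-invertible, rather than on $x$ itself, is exactly what makes this work, since non-invertibility of $1-c$ is what excludes the nilpotent branch of the Fitting alternative; this buys a self-contained, more elementary proof, at the price of losing the structural description of $A\qt\rad A$ that Wedderburn--Artin gives for free. Two small steps you should still write out: first, that the right inverse $ba$ of $x$ is in fact two-sided --- immediate in finite dimension, since $x(ba)=1$ makes $L_x$ surjective, hence bijective, so $(ba)x=1$; alternatively note $ba\notin\rad A$ (else $1=x(ba)\in\rad A$) and iterate your right-inverse claim; and second, in $(1)\imp(3)$, that every proper left ideal of $A$ is contained in a maximal one (clear by finite-dimensionality), which is what lets the unique maximal left ideal absorb all proper left ideals.
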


%\subsubsection{}
Given a commutative ring $R$, define an $R$-linear category \cA to be a category enriched over $\Mod R$.
This means that the \Hom-sets in \cA are equipped with an $R$-module structure and the composition maps are $R$-linear.
Let $\cA$ be an additive category.
If $X\in\cA$ has a local endomorphism ring, then $X$ is indecomposable.
An additive category \cA is called Krull-Schmidt if every
object decomposes as a finite direct sum of indecomposable objects and if indecomposable objects have local endomorphism rings.
In this case
\begin{enumerate}
\item \cA is Karoubian (has splitting idempotents), that is, for every idempotent $e\in\End(X)$, there exists a decomposition $X\iso X_1\oplus X_2$ such that $X\to X_1\to X$ is equal to $e$.
%\item An object is indecomposable if and only if its endomorphism ring is local.
\item 
A decomposition of an object into a finite direct sum of indecomposable objects is unique up to a permutation of summands (Krull-Schmidt theorem).
\end{enumerate}

If $K$ is a field and \cA is an additive, $K$-linear, Karoubian category with finite-dimensional \Hom-spaces,
then \cA is a Krull-Schmidt category.
Indeed, we can split every object, as long as it has non-trivial idempotents, and this process will eventually stop because of finite-dimensionality of \Hom-spaces.
If $X$ is an indecomposable object, then $\End(X)$ has only idempotents $0,1$, hence it is a local algebra.

\medskip
Given a Krull-Schmidt category \cA, define its radical $\rad_\cA$ to be the class of morphisms
\begin{equation}
\rad_\cA(X,Y)=\sets{a\in\cA(X,Y)}{1_X-ba\text{ is invertible }\forall b\in\cA(Y,X)},\qquad X,Y\in\cA.
\end{equation}

\begin{theorem}[]
We have \assem[A.3]
\begin{enumerate}
\item $\rad_\cA$ is a two-sided ideal in \cA.
\item
$\rad_\cA(X,Y)=\sets{a\in\cA(X,Y)}{1_Y-ab\text{ is invertible }\forall b\in\cA(Y,X)}.$
\item If $X=\bop_{i=1}^m X_i$ and $Y=\bop_{j=1}^nY_j$, then $\rad_\cA(X,Y)\iso\bop_{i,j}\rad_\cA(X_i,Y_j)$.
\item For any $X\in\cA$, the radical $\rad_\cA(X,X)$ is equal to the radical of the ring $\End_\cA(X)$.
\item If $X,Y$ are indecomposable, then $\rad_\cA(X,Y)$ is the set of all non-isomorphisms in $\cA(X,Y)$.
In particular, if $X\not\iso Y$, then $\rad_\cA(X,Y)=\cA(X,Y)$.
\end{enumerate}
\end{theorem}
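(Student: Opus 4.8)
The plan is to deduce all five assertions from a single symmetry lemma together with the ring-theoretic description of the radical recalled above, applied to endomorphism algebras. \textbf{The key lemma} is that for $a\in\cA(X,Y)$ and $b\in\cA(Y,X)$ the endomorphism $1_X-ba$ is invertible in $\End(X)$ if and only if $1_Y-ab$ is invertible in $\End(Y)$, the explicit inverses being $(1_Y-ab)\inv=1_Y+a(1_X-ba)\inv b$ and symmetrically. This is the categorical form of the classical $1-ba$ versus $1-ab$ identity and is verified by direct multiplication, using $(1_X-ba)\inv(ba)=(1_X-ba)\inv-1_X$. It yields part (2) immediately, since the defining condition ``$1_X-ba$ invertible for all $b$'' and its transpose ``$1_Y-ab$ invertible for all $b$'' then become interchangeable.

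Granting the lemma, part (4) is immediate: for $X=Y$ the definition of $\rad_\cA(X,X)$ is verbatim the characterization $\rad\End(X)=\sets{a}{1_X-ba\text{ invertible }\forall b}$ supplied by the ring theorem above. For part (1) I would first treat composition. Closure under post-composition by $c\in\cA(Y,Y')$ follows by writing $b(ca)=(bc)a$, and closure under pre-composition by $c\in\cA(X',X)$ follows after passing to the transpose via the lemma and writing $(ac)b=a(cb)$; in each case the required invertibility is inherited from $a\in\rad_\cA(X,Y)$. The one genuinely non-formal point is closure under addition: given $a_1,a_2\in\rad_\cA(X,Y)$ and $b\in\cA(Y,X)$, I would factor
\[
1_X-b(a_1+a_2)=(1_X-ba_1)\bigl(1_X-(1_X-ba_1)\inv b\,a_2\bigr),
\]
where the first factor is invertible because $a_1$ lies in the radical and the second because $a_2$ does, applied to the morphism $(1_X-ba_1)\inv b\in\cA(Y,X)$. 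Negation is the case $b\mapsto-b$, so $\rad_\cA(X,Y)$ is a subgroup and, with the composition closure, a two-sided ideal.

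Part (3) is then formal. Writing $a\in\cA(X,Y)$ as a matrix $(a_{ji})$ with $a_{ji}=\pi_j a\iota_i\in\cA(X_i,Y_j)$ relative to the inclusions $\iota_i$ and projections $\pi_j$ of the given biproducts, the ideal property (1) forces each $a_{ji}$ into $\rad_\cA(X_i,Y_j)$ whenever $a\in\rad_\cA(X,Y)$, since $a_{ji}$ is a composite of $a$ with split inclusions and projections; conversely $a=\sum_{i,j}\iota_j a_{ji}\pi_i$ lies in the radical as a finite sum of radical morphisms, and this is the asserted block decomposition. For the forward inclusion in (5), if $a\in\cA(X,Y)$ is an isomorphism then taking $b=a\inv$ gives $1_X-ba=0$, which is not invertible, so every element of $\rad_\cA(X,Y)$ is a non-isomorphism.

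\textbf{The main obstacle} is the converse inclusion in (5), and this is where the Krull--Schmidt hypothesis enters through the locality of $\End(X)$ and $\End(Y)$. Let $a\in\cA(X,Y)$ be a non-isomorphism and $b\in\cA(Y,X)$ arbitrary; since $\End(X)$ is local it suffices to show that $ba$ is a non-unit, for then $ba\in\rad\End(X)$ and $1_X-ba$ is invertible. Suppose instead $ba$ were invertible with inverse $c$. Then $(cb)a=c(ba)=1_X$, and $e:=acb\in\End(Y)$ satisfies $e^2=ac(ba)cb=acb=e$ and $ea=ac(ba)=a$. As $\End(Y)$ is local its only idempotents are $0,1_Y$; if $e=0$ then $a=ea=0$, contradicting invertibility of $ba$, while if $e=1_Y$ then $a(cb)=acb=1_Y$ together with $(cb)a=1_X$ exhibit $a$ as an isomorphism, contrary to hypothesis. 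Hence $ba$ is a non-unit and $a\in\rad_\cA(X,Y)$, so the radical is exactly the set of non-isomorphisms. The final clause is then immediate: if $X\not\iso Y$ there are no isomorphisms in $\cA(X,Y)$, whence $\rad_\cA(X,Y)=\cA(X,Y)$.
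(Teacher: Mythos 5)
Your proof is correct in all five parts: the $1_X-ba$ versus $1_Y-ab$ inverse formula, the factorization $1_X-b(a_1+a_2)=(1_X-ba_1)\bigl(1_X-(1_X-ba_1)^{-1}ba_2\bigr)$ for closure under addition, the matrix/biproduct argument for (3), and the idempotent-plus-locality argument for (5) all check out. Note that the paper offers no proof of this theorem at all — it simply cites \assem[A.3] — and your self-contained argument is essentially the standard one found in that reference, so there is nothing to reconcile beyond observing that your write-up fills in the details the paper delegates to the literature.
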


\section{Linear stacks}
\label{linear stacks}

\subsection{General linear stacks}
In this section a field extension will always mean a finite separable field extension, unless otherwise stated.
%For any field $K$, a $K$-linear category will always mean an additive $K$-linear category with splitting idempotents and with finite-dimensional \Hom-spaces.
The goal of this section is to formalize the structures appearing in the following example.
Given a finite-dimensional algebra $A$ over a field $\bk$, let $\mmod A$ denote the category of finitely generated (left) $A$-modules.
%(hence finite-dimensional over $\bk$)
For any field extension $K/\bk$, let $A_K=K\ts_\bk A$ and let $\cA(K)=\mmod A_K$.
Given field extensions $L/K/\bk$ (with an embedding $\si:K\emb L$), there is a functor
\begin{equation}
\rho_{L/K}=\rho_\si:\cA(K)\to\cA(L),\qquad X\mto X_L=L\ts_KX.
\end{equation}

This data defines what we will call a linear stack over the small \'etale site of $\bk$.
Here are some of the properties of $\cA$:
\begin{enumerate}
\item The category $\cA(K)$ is $K$-linear and the functor $\rho_{L/K}:\cA(K)\to\cA(L)$ is $K$-linear on the \Hom-spaces, for any field extension $L/K$.
\item If $X,Y\in\cA(K)$ and $L/K$ is a field extension, then
$$\Hom(X_L,Y_L)\iso\Hom(X,Y)\ts_KL.$$
In particular, if $L/K$ is a Galois extension with $G=\Gal(L/K)$, then 
$$\Hom(X_L,Y_L)^G\iso\Hom(X,Y).$$
\item
Consider a Galois extension $L/K$ with $G=\Gal(L/K)$ and let $X\in\cA(K)$ and $Y=L\ts_KX\in\cA(L)$.
Every $\si\in G$ induces a functor $\rho_\si:\cA(L)\to\cA(L)$ so that $\rho_{\si\ta}\iso\rho_\si\rho_\ta$, for $g,h\in G$.
%\note{If $K\xto h L\xto g L'$, $X\in\cA(K)$ \imp $\rho_{\si\ta}X=L'\ts_K X=L'\ts_L(L\ts_KX)=\rho_\si(\rho_\taX)$}
We define $Y^\si=\rho_\si\inv(Y)\in\cA(L)$ which is given by the same group $Y$ with the new multiplication $a\circ_\si y=\si(a)y$, for $a\in A_L$ and $y\in Y$.
Define a map 
$$\vi_\si:Y\to Y^\si,\qquad L\ts_K X\ni a\ts x\mto g(a)\ts x,$$
which is an isomorphism in $\cA(L)$.
These maps satisfy $\vi_{\si\ta}=\rho_\ta\inv(\vi_\si)\circ\vi_\ta$.
One can show that the category of pairs $(Y,\vi)$ as above is equivalent to the category $\cA(K)$.% (Galois descent).
\end{enumerate}

%We will assume that $K$-linear categories are additive and have splitting idempotents unless otherwise stated.
Now let us formalize the above structures.
Let $(\cC,\cO)$ be a ringed site.
This means that \cC is a category equipped with a Grothendieck topology, and \cO is a sheaf of commutative rings over~\cC.
Recall that a sheaf of $\cO$-modules (or an \cO-module) is a sheaf $\cF$ over \cC such that, for every $U\in\Ob\cC$, the set $\cF(U)$ is equipped with an $\cO(U)$-module structure
and, for every morphism $U\to V$ in \cC, the restriction map $\rho:\cF(V)\to \cF(U)$ is $\cO(V)$-linear.
Similarly, we define an \cO-stack (or an \cO-linear stack) to be a stack \cA over \cC \vis[\S4.1.3] such that,
for every $U\in\Ob\cC$,
the category $\cA(U)$ is $\cO(U)$-linear
and such that,
for every morphism $U\to V$ in \cC,
the restriction functor $\rho:\cA(V)\to\cA(U)$ is $\cO(V)$-linear.

We will be interested in just one particular ringed site.
Let \bk be a field and let $\cC=\Spec(\bk)_\et$ be the small \'etale site of $\Spec(\bk)$ \vis[Ex.~2.29].
Its opposite category is the category of separable (commutative, locally of finite type) algebras over \bk, that is, products of finite separable field extensions of \bk.
A sheaf \cF over \cC satisfies $\cF(\bigsqcup_i U_i)=\prod_i\cF(U_i)$, hence it is determined by $\cF(K)=\cF(\Spec K)$ for finite separable field extensions $K/\bk$, together with compatible restriction maps $\cF(K)\to \cF(L)$, for finite separable field extensions $L/K/\bk$, satisfying the gluing axiom for Galois extensions.
The latter condition means that $\cF(K)\iso \cF(L)^G$ for 
a Galois extension $L/K$ with the Galois group $G$
%\cite[Prop.~6.4]{milne_lectures}.
\cite[Prop.~2.1.4]{milne_etale}.
Define \cO to be the structure sheaf on the site \cC, that is,
$\cO(K)=K$ for every field extension $K/\bk$.
%\Ga(U,\cO_U)$ for every object $U\to\Spec(\bk)$.
Given an \cO-module \cF, the set $\cF(K)$ is equipped with a structure of a $K$-vector space.
Moreover, for field extensions $L/K/\bk$ (with an embedding $\si:K\emb L$), there is a $K$-linear restriction map
$\rho_{L/K}=\rho_\si:\cF(K)\to\cF(L).$

Given an \cO-linear stack $\cA$, the category $\cA(K)$ is $K$-linear and the restriction functor $\rho_{L/K}=\rho_\si:\cA(K)\to\cA(L)$ is also $K$-linear.
%for all field extensions $L/K/\bk$.
We will always assume that $\cA(K)$ are additive Karoubian with finite-dimensional \Hom-spaces
and call such stack a linear stack (\cf the notion of a sheaf-category over \bk studied in \cite{dobrovolska_moduli}).
This implies that the categories $\cA(K)$ are Krull-Schmidt
\S\ref{sec:rad}.
Given a field extension $L/K$,
we define $X_L=\rho_{L/K}(X)$, for $X\in\cA(K)$,
and we say that $Y\in\cA(L)$ descends to $X\in\cA(K)$ if $Y\iso X_L$.
We will say that an object $X\in\cA(K)$ is absolutely indecomposable if $X_L\in\cA(L)$ is indecomposable,
for every field extension $L/K$.

\begin{remark}
\label{rem:descent data}
Given a Galois extension $L/K$ with the Galois group $G=\Gal(L/K)$, there is an isomorphism
$$L\ts_KL\iso\prod_{\si\in G}L,\qquad a\ts b\mto(a\cdot\si b)_{\si\in G}.$$
Consider canonical homomorphisms
$$
i_1,i_2:L\to L\ts_K L\iso\prod_{\si\in G}L\qquad
i_1(a)\mto (a)_{\si\in G},\qquad i_2(b)\mto (\si b)_{\si\in G}.
$$
and the corresponding restriction functors
$$\rho_1,\rho_2:\cA(L)\to\cA(L\ts_KL)\iso\coprod_{\si\in G}\cA(L)$$
An object with descent data $(Y,\vi)$ \wrt $L/K$ \vis[\S4.1.2] 
consists of $Y\in\cA(L)$ and an isomorphism $\vi:\rho_2X\iso\rho_1X$.
This corresponds to isomorphisms $\vi_\si:\rho_\si(Y)\to Y$ or $\vi_\si:Y\to Y^\si=\rho_\si\inv(Y)$, for $\si\in G$.
These isomorphisms should satisfy certain compatibility condition over $L\ts_KL\ts_KL$.
%$\rho_h\inv(\vi_g)\circ\vi_h:Y\to Y^{gh}$
%coincides (under some identification) with $\vi_{gh}$.
By stack axioms, the category $\cA(K)$ is equivalent to the category of the above descent data, where $X\in\cA(K)$ is sent to $X_L\in\cA(L)$.
\end{remark}

\begin{lemma}
\label{lm:hom tensor}
Consider field extensions $L/K/\bk$. Then
\begin{enumerate}
\item For any \cO-module \cF, we have $\cF(K)\ts_K L\iso \cF(L)$. % (Galois descent).
\item For any linear stack \cA and $X,Y\in\cA(K)$, we have
$\Hom(X_L,Y_L)\iso\Hom(X,Y)\ts_KL$.
\end{enumerate}
\end{lemma}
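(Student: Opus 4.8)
The plan is to prove (1) and then obtain (2) as a formal consequence. For the reduction, fix $X,Y\in\cA(K)$ and consider the assignment $\cF\colon L\mto\Hom_{\cA(L)}(X_L,Y_L)$ on the small \'etale site of $\Spec K$. The stack axioms \vis[\S4.1.3] say precisely that for any two objects the presheaf of morphisms $\underline{\operatorname{Hom}}(X,Y)$ is a sheaf; since each $\cA(L)$ is $L$-linear and each restriction functor is $\cO$-linear on $\Hom$-spaces, this sheaf is an $\cO$-module. As $\cF(K)=\Hom(X,Y)$ and $\cF(L)=\Hom(X_L,Y_L)$, statement (1), applied over the base $K$ in place of $\bk$, gives at once $\Hom(X,Y)\ts_KL\iso\Hom(X_L,Y_L)$.

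To prove (1), I would first assume $L/K$ is Galois with $G=\Gal(L/K)$. Each $\si\in G$ is a $K$-automorphism of $L$, hence induces a restriction map $\rho_\si\colon\cF(L)\to\cF(L)$ that is $\si$-semilinear (\ie $\rho_\si(av)=\si(a)\rho_\si(v)$ for $a\in L$), and functoriality gives $\rho_{\si\ta}=\rho_\si\rho_\ta$; thus $\cF(L)$ carries a semilinear $G$-action. The gluing axiom for the Galois cover $\Spec L\to\Spec K$ identifies $\cF(K)$ with the invariants $\cF(L)^G$. The desired isomorphism $\cF(K)\ts_KL\iso\cF(L)$ is then exactly Galois descent for vector spaces: the natural $L$-linear map $\cF(L)^G\ts_KL\to\cF(L)$, $v\ts\la\mto\la v$, is an isomorphism for every $L$-space equipped with a semilinear $G$-action.

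For a general finite separable extension $L/K$, I would embed $L$ into a Galois closure $M/K$, so that both $M/K$ and $M/L$ are Galois. The Galois case yields $\cF(K)\ts_KM\iso\cF(M)$ and $\cF(L)\ts_LM\iso\cF(M)$, and compatibility of restrictions, $\rho_{M/K}=\rho_{M/L}\rho_{L/K}$, shows that tensoring the natural map $\eta\colon\cF(K)\ts_KL\to\cF(L)$ with $M$ over $L$ recovers the isomorphism $\cF(K)\ts_KM\isoto\cF(M)$. Since $L\to M$ is faithfully flat, $\eta$ is itself an isomorphism.

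The main obstacle is the Galois-descent step underlying the Galois case of (1): one has to check carefully that the restriction maps along elements of $G$ genuinely define a semilinear $G$-action on the $\cO$-module $\cF$, so that $\cF(L)^G$ is the correct invariant object matching the gluing axiom, and then invoke the classical equivalence between $K$-vector spaces and $L$-vector spaces with semilinear $G$-action. Once this is in place, both the reduction to the Galois case and the deduction of (2) are purely formal.
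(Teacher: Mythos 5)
Your proposal is correct and follows essentially the same route as the paper: reduce to the Galois case via the Galois closure, identify $\cF(K)$ with $\cF(L)^G$ using the sheaf gluing axiom, conclude by Galois descent for vector spaces with semilinear $G$-action, and deduce (2) from (1) because $L\mto\Hom(X_L,Y_L)$ is a sheaf of $\cO$-modules by the stack axioms. The only difference is that you spell out details the paper leaves implicit (the semilinearity check and the faithfully flat descent argument justifying the reduction from the Galois closure back to $L/K$), which is a welcome elaboration rather than a new approach.
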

\begin{proof}
1. Considering the Galois closure $L'$ of $L/K$, we get Galois extensions $L'/L$ and $L'/K$ and if the statement will be proved for these extensions then it will follow for $L/K$. Therefore we can assume that $L/K$ is Galois.
By the sheaf axioms, we have $\cF(K)=\cF(L)^G$, where $G=\Gal(L/K)$.
For the vector space $V=\cF(L)$, the canonical map $V^G\ts_KL\to V$ is an isomorphism by the Galois descent.
2. We use the previous statement and the fact that the map $L/K\mto\Hom(X_L,Y_L)$ defines a sheaf.
\end{proof}

Let $\cA$ be an  linear stack and let $X,Y\in\cA(K)$.
We will denote $\Hom_{\cA(K)}(X,Y)$ by $\Hom(X,Y)$ and denote $\rad_{\cA(K)}(X,Y)$ by $\rad(X,Y)$.
The corresponding category is usually clear from the context.

\begin{lemma}
\label{lm:rad tensor}
Let \cA be a linear stack, $L/K/\bk$ be field extensions
and $X,Y\in\cA(K)$. Then
$$\rad(X_L,Y_L)\iso\rad(X,Y)\ts_KL.$$
\end{lemma}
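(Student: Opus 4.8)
The plan is to deduce the statement from the corresponding fact for the radical of a finite-dimensional algebra (part~(4) of the theorem on finite-dimensional algebras in \S\ref{sec:rad}, due to Bourbaki), by packaging the four Hom-spaces between $X$ and $Y$ into a single endomorphism ring. Set $Z=X\oplus Y\in\cA(K)$, so that $Z_L=X_L\oplus Y_L$ and $\End(Z)$ is a finite-dimensional $K$-algebra. First I would observe that the restriction functor $\rho_{L/K}$, being $K$-linear and compatible with composition, induces a natural $L$-algebra homomorphism $\End(Z)\ts_K L\to\End(Z_L)$; by Lemma~\ref{lm:hom tensor}(2) this map is bijective, hence an isomorphism of $L$-algebras, under which the block $\Hom(X,Y)\ts_K L$ is carried isomorphically onto $\Hom(X_L,Y_L)$.

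Next, since $L/K$ is a finite separable extension, the Bourbaki result gives $\rad(\End(Z)\ts_K L)\iso\rad(\End(Z))\ts_K L$ as ideals of $\End(Z)\ts_K L$. Transporting through the algebra isomorphism above, and using that $\rad(\End(Z))=\rad_\cA(Z,Z)$ and $\rad(\End(Z_L))=\rad_\cA(Z_L,Z_L)$ (the fact that the categorical radical of a self-Hom equals the ring radical, part~(4) of the theorem on the categorical radical in \S\ref{sec:rad}), I obtain $\rad_\cA(Z_L,Z_L)\iso\rad_\cA(Z,Z)\ts_K L$.

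Finally I would extract the desired component. By the block-decomposition property of the categorical radical (part~(3) of the same theorem), $\rad_\cA(Z,Z)$ splits as the direct sum of $\rad_\cA(X,X)$, $\rad_\cA(X,Y)$, $\rad_\cA(Y,X)$, $\rad_\cA(Y,Y)$, with $\rad_\cA(X,Y)$ being exactly the part lying in the summand $\Hom(X,Y)\sbs\End(Z)$; the same holds for $Z_L$. Since the algebra isomorphism respects the decomposition $Z_L=X_L\oplus Y_L$ (it is induced by $\rho_{L/K}$, which sends the idempotents cutting out $X,Y$ to those cutting out $X_L,Y_L$), it matches the $\Hom(X,Y)$-block with the $\Hom(X_L,Y_L)$-block. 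Restricting the isomorphism $\rad_\cA(Z_L,Z_L)\iso\rad_\cA(Z,Z)\ts_K L$ to these blocks then yields $\rad(X_L,Y_L)\iso\rad(X,Y)\ts_K L$, as claimed.

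The main obstacle is bookkeeping rather than conceptual: one must check that the isomorphism of Lemma~\ref{lm:hom tensor}(2), which a priori is only an isomorphism of $L$-vector spaces coming from the sheaf and descent axioms, is in fact multiplicative, so that it intertwines the ring radicals, and one must verify that it is block-diagonal with respect to the decomposition of $Z$ and $Z_L$. Both reduce to functoriality of $\rho_{L/K}$. The separability of $L/K$ is essential and is used exactly once, in invoking the Bourbaki identity $\rad(A\ts_K L)\iso(\rad A)\ts_K L$; this is precisely why we restrict throughout \S\ref{linear stacks} to finite separable extensions.
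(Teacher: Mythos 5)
Your proof is correct and follows essentially the same route as the paper's: reduce via the block decomposition of the radical to $Z=X\oplus Y$, identify $\End(Z_L)\iso\End(Z)\ts_KL$ by Lemma~\ref{lm:hom tensor}, and invoke the Bourbaki identity $\rad(A\ts_KL)\iso(\rad A)\ts_KL$ for separable extensions. The only difference is that you spell out the bookkeeping (multiplicativity of the Hom-tensor isomorphism and its compatibility with the blocks) that the paper's terser proof leaves implicit.
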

\begin{proof}
As the radical is additive (see \S\ref{sec:rad}), it is enough
to consider $Z=X\oplus Y$, 
$A=\End(Z)$, $B=\End(Z_L)$
and to show that $\rad B\iso (\rad A)\ts_K L$.
We know that $B\iso A\ts_K L$ by Lemma \ref{lm:hom tensor}.
On the other hand we know that $\rad(A\ts_K L)\iso(\rad A)\ts_KL$ for separable field extensions~\S\ref{sec:rad}.
\end{proof}

\begin{lemma}
\label{lem:iso descent}
Let \cA be a linear stack, $L/K/\bk$ be field extensions
and $X,Y\in\cA(K)$.
If $X_L\iso Y_L$, then $X\iso Y$.
\end{lemma}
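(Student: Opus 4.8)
The plan is to compare the Krull--Schmidt decompositions of $X$ and $Y$ directly. The one genuine difficulty is that base change may \emph{refine} a decomposition: an object that is indecomposable over $K$ can break into several summands over $L$, so one cannot simply match summands of $X_L$ with those of $Y_L$ and descend. The key structural fact I would isolate first is that this refinement keeps non-isomorphic indecomposables apart. Precisely, I claim that if $W,W'\in\cA(K)$ are indecomposable with $W\not\iso W'$, then $W_L$ and $W'_L$ have no common indecomposable summand in $\cA(L)$.

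I would prove this claim by contradiction. Suppose an indecomposable $V\in\cA(L)$ is a summand of both $W_L$ and $W'_L$, with structure maps $\iota_V\col V\to W_L$, $\pi_V\col W_L\to V$ and $\iota'_V\col V\to W'_L$, $\pi'_V\col W'_L\to V$ satisfying $\pi_V\iota_V=1_V=\pi'_V\iota'_V$. Then $\vi=\iota'_V\pi_V\col W_L\to W'_L$ cannot lie in $\rad(W_L,W'_L)$, for otherwise $\pi'_V\vi\,\iota_V=1_V$ would lie in $\rad(V,V)=\rad\End(V)$, which is impossible for $V\neq0$ since this radical is nilpotent, hence proper (\S\ref{sec:rad}). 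On the other hand, because $W\not\iso W'$ are indecomposable over $K$, the description of the radical of a Krull--Schmidt category gives $\rad(W,W')=\Hom(W,W')$ (\S\ref{sec:rad}); tensoring with $L$ and applying Lemmas \ref{lm:hom tensor} and \ref{lm:rad tensor} yields
$$\rad(W_L,W'_L)\iso\rad(W,W')\ts_KL=\Hom(W,W')\ts_KL\iso\Hom(W_L,W'_L).$$
Thus every morphism $W_L\to W'_L$ lies in the radical, contradicting $\vi\notin\rad(W_L,W'_L)$. This establishes the claim.

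With the claim available, the statement would follow from uniqueness of Krull--Schmidt decompositions over $L$. I would take representatives $W_1,\dots,W_r\in\cA(K)$ of the finitely many isomorphism classes of indecomposables occurring in $X$ or in $Y$, and write $X\iso\bop_kW_k^{a_k}$, $Y\iso\bop_kW_k^{b_k}$ with $a_k,b_k\ge0$; it then suffices to show $a_k=b_k$ for all $k$. Choosing, for each $k$, an indecomposable decomposition $(W_k)_L\iso\bop_lV_{kl}^{c_{kl}}$ over $L$ with the $V_{kl}$ pairwise non-isomorphic for fixed $k$, the claim guarantees that the $V_{kl}$ attached to different indices $k$ are also pairwise non-isomorphic, so that $X_L\iso\bop_{k,l}V_{kl}^{a_kc_{kl}}$ and $Y_L\iso\bop_{k,l}V_{kl}^{b_kc_{kl}}$ are honest Krull--Schmidt decompositions. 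The hypothesis $X_L\iso Y_L$ together with their uniqueness (\S\ref{sec:rad}) forces $a_kc_{kl}=b_kc_{kl}$ for all $k,l$, and since each $W_k$ is nonzero there is some $l$ with $c_{kl}\ge1$; hence $a_k=b_k$ and $X\iso Y$.

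The main obstacle is precisely the disjointness claim: without it a summand $V_{kl}$ shared between the base changes of two distinct $K$-indecomposables would blend the multiplicities $a_k$ and $b_k$ after tensoring, and these could not be recovered from $X_L\iso Y_L$; everything after the claim is bookkeeping with the Krull--Schmidt theorem. It is worth noting that the two inputs making the claim work are the compatibility of $\Hom$ and of $\rad$ with base change (Lemmas \ref{lm:hom tensor} and \ref{lm:rad tensor}), which is exactly where the separability of $L/K$ is used.
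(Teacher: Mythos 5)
Your proof is correct and rests on exactly the same key mechanism as the paper's: for non-isomorphic indecomposables over $K$ the radical equals the full $\Hom$-space, this persists after base change by Lemmas \ref{lm:hom tensor} and \ref{lm:rad tensor}, and a shared indecomposable summand would produce a morphism outside the radical. The paper merely organizes the Krull--Schmidt bookkeeping differently (iteratively removing common summands of $X$ and $Y$ rather than matching multiplicities all at once), so the two arguments are essentially identical.
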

\begin{proof}
If $X,Y$ don't have isomorphic indecomposable summands, then we have $\rad(X,Y)=\Hom(X,Y)$ by \S\ref{sec:rad}.
But this would imply that $\rad(X_L,Y_L)=\Hom(X_L,Y_L)$, hence $X_L$, $Y_L$ don't have isomorphic indecomposable summands, a contradiction.
Choosing isomorphic indecomposable summands in $X,Y$, we can remove them and repeat the argument.
\end{proof}

\subsection{Linear stacks over finite fields}
From now on we assume that \bk is a finite field.
Then every finite field extension $K/\bk$ is automatically separable and Galois.
If $L/K$ is a field extension and $Y\iso X_L$ for some $X\in\cA(K)$, then $Y^\si\iso Y$ for all $\si\in\Gal(L/K)$.
The converse is also true, even without a requirement of descent data on $Y$.
The following series of results is analogous to \cite[\S2.4]{mozgovoy_poincare}.

\begin{lemma}
\label{lm:descent}
Let $G=\Gal(L/K)$ and $Y\in\cA(L)$ be such that $Y^\si\iso Y$ for all $\si\in G$.
Then there exists $X\in\cA(K)$ such that $Y\iso X_L$.
\end{lemma}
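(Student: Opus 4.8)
The plan is to upgrade the given isomorphisms $Y^\si\iso Y$ to an honest descent datum and then quote the equivalence of $\cA(K)$ with the category of descent data from Remark~\ref{rem:descent data}. Since \bk is finite, $G=\Gal(L/K)=\ang\si$ is cyclic, generated by the Frobenius $\si$ with $\si^n=1$, where $n=[L:K]$, so that $Y^{\si^n}=Y$. For a cyclic group a descent datum is the same thing as a single isomorphism $\vi\col Y\to Y^\si$: the cocycle relation $\vi_{\si\ta}=\rho_\ta\inv(\vi_\si)\circ\vi_\ta$ forces all the $\vi_{\si^k}$ out of $\vi_\si=\vi$, and the one remaining constraint, coming from $\si^n=\id$, is the norm condition
\eq{N(\vi):=\rho_\si^{-(n-1)}(\vi)\circ\dots\circ\rho_\si\inv(\vi)\circ\vi=1_Y\col Y\to Y^{\si^n}=Y.}
So it suffices to produce one $\vi$ with $N(\vi)=1_Y$.

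First I would pick, using the hypothesis, an arbitrary isomorphism $\psi\col Y\to Y^\si$ and measure how far it is from a descent datum by its defect $u=N(\psi)\in\Aut(Y)=A^*$, where $A=\End(Y)$ is a finite-dimensional $L$-algebra. The map $\psi$ induces a $\si$-semilinear algebra automorphism $F$ of $A$ by $F(b)=\psi\inv\circ\rho_\si\inv(b)\circ\psi$, and a short induction gives $F^k(b)=w_k\inv\,\rho_\si^{-k}(b)\,w_k$ with $w_k=\rho_\si^{-(k-1)}(\psi)\circ\dots\circ\psi$; in particular $w_n=u$, $F^n(b)=u\inv b\,u$, and $F(u)=u$. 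Replacing $\psi$ by $\psi\circ a$ with $a\in A^*$ multiplies the defect by the twisted norm of $a$, so that $N(\psi a)=u\cdot F^{n-1}(a)\cdots F(a)\,a$, and the requirement $N(\psi a)=1_Y$ becomes the Lang-type equation
\eq{F^{n-1}(a)\,F^{n-2}(a)\cdots F(a)\,a=u\inv\qquad\text{in }A^*.}

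Finally I would solve this equation. The unit group $A^*$ of the finite-dimensional algebra $A$ is the complement in the affine space $A$ of the hypersurface where the determinant of left multiplication vanishes, hence a connected algebraic group, and $F$ (after restricting scalars from $L$ to $K$) is a Frobenius endomorphism of it. By Lang's theorem the corresponding twisted-norm map is surjective, so some $a\in A^*$ solves the displayed equation; then $\vi=\psi\circ a$ satisfies $N(\vi)=1_Y$ and is a genuine descent datum, whence Remark~\ref{rem:descent data} yields $X\in\cA(K)$ with $X_L\iso Y$. I expect this last step to be the main obstacle: one must verify that adjusting $\psi$ acts on the defect $u$ precisely through a surjective Lang map, and it is exactly here — rather than in the purely formal cyclic reduction — that the connectedness of $A^*$ and the finiteness of \bk are both indispensable.
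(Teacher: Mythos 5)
Your first two paragraphs are correct: for the cyclic group $\Gal(L/K)$ a descent datum is indeed a single isomorphism $\psi\col Y\to Y^\si$ with $N(\psi)=1_Y$, and your bookkeeping ($F^k(b)=w_k\inv\rho_\si^{-k}(b)w_k$, $F^n=\mathrm{Int}(u\inv)$, $F(u)=u$, $N(\psi a)=u\cdot F^{n-1}(a)\cdots F(a)\,a$) all checks out. The gap is exactly where you feared, and both assertions in your last step are false as stated. First, $F$ is \emph{not} a Frobenius endomorphism after restriction of scalars: it induces an algebraic automorphism of the $K$-group $R\mto (A\ts_K R)^\times$ whose fixed locus over $\ubar\bk$ is the unit group of $A^F\ts_K\ubar\bk$, a positive-dimensional subgroup (note $K\cdot 1\sbs A^F$), whereas a Frobenius--Steinberg endomorphism must have a \emph{finite} fixed-point set; so Lang--Steinberg does not apply to $F$ itself. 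Second, the twisted-norm map on $A^*$ is not surjective: already for $A=L$, $F=\si$ it is the field norm $N_{L/K}\col L^*\to L^*$, whose image is $K^*\subsetneq L^*$. What your argument actually needs --- and what is true --- is only that the particular element $u\inv$, which is $F$-fixed and implements $F^n$ by conjugation, is a twisted norm; neither of your two cited facts gives this.

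The step can be repaired, but it takes the following twist. Put $\ub A=A\ts_L\ubar\bk$ (connected unit group, as you argue), let $\Phi(x\ts\la)=x\ts\la^{\# L}$ be the Frobenius of the $L$-form, so $\ub A^\Phi=A$, and extend $F$ semilinearly by $\tl F(x\ts\la)=F(x)\ts\la^{\# K}$. Then $\tl F$ is an endomorphism of the group $\ub A^\times$ commuting with $\Phi$, with $\tl F(u)=u$ and $\tl F^n=\mathrm{Int}(u\inv)\circ\Phi$. Now apply classical Lang's theorem to the genuine Frobenius $\Phi$: choose $g\in\ub A^\times$ with $g\inv\Phi(g)=u\inv$ and set $a=\tl F(g)g\inv$. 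Telescoping gives $\tl F^{n-1}(a)\cdots\tl F(a)a=\tl F^n(g)g\inv=u\inv\Phi(g)u\cdot g\inv=u\inv$, and $\Phi(a)=\tl F(\Phi(g))\Phi(g)\inv=\tl F(gu\inv)(gu\inv)\inv=\tl F(g)u\inv\cdot ug\inv=a$, so $a\in A^*$ and solves $F^{n-1}(a)\cdots F(a)a=u\inv$. With this patch your proof is correct and genuinely different from the paper's: the paper never leaves the category, observing instead that the defect $u=f^n$ has finite order $m$ in the finite group $\Aut(Y)$, so that over the degree-$m$ extension $L'/L$ the pulled-back $f$ has trivial norm for the cyclic group $\Gal(L'/K)$, hence defines honest descent data; this yields $X\in\cA(K)$ with $X_{L'}\iso Y_{L'}$, and Lemma \ref{lem:iso descent} descends the isomorphism to $L$. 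Enlarging the field kills the obstruction essentially for free, while correcting $\psi$ over the given field, as you propose, costs Lang's theorem plus the rationality analysis above.
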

\begin{proof}
Let $n=[L\rcol K]$ and $\si\in G$ be the generator of the Galois group (which is cyclic).
Choose an isomorphism $f:Y\to Y^\si$ and let
$f^k:Y\to Y^{\si^k}$ be the corresponding composition.
In particular, we get an isomorphism $f^n:Y\to Y$.
The group $\Aut(Y)\sbs\Hom(Y,Y)$ is finite.
Choose $m\ge1$ such that $f^{mn}=1$ and choose a finite field extension $L'/L$ of degree $m$.
We have an epimorphism
$\Gal(L'/K)\to\Gal(L/K)$ and we can choose a generator $\si'\in\Gal(L'/K)$ that is mapped to~\si.
Now we define $\vi_{\si'}=\rho_{L'/L}(f)$ and extend it to the action of $\Gal(L'/K)$ on $Y_{L'}$ (meaning a descent data on $Y_{L'}$).
In this way we find $X\in\cA(K)$ such that $\rho_{L'/K}(X)\iso Y_{L'}=\rho_{L'/L}(Y)$.
But $\rho_{L'/K}(X)\iso\rho_{L'/L}(\rho_{L/K}(X))$,
hence $\rho_{L/K}(X)\iso Y$ by Lemma~\ref{lem:iso descent}.
\end{proof}

\begin{remark}
\label{rem:min field}
In view of the previous lemma, given a field extension $L/\bk$ and $Y\in\cA(L)$, define the minimal field of definition
$$\fm(Y)=K=L^G,\qquad G=\sets{\si\in\Gal(L/\bk)}{Y^\si\iso Y}.$$
Then $Y^\si\iso Y$ for all $\si\in G=\Gal(L/K)$, hence there exists $X\in\cA(K)$ such that $Y\iso X_L$.
If there exists a field extension $L/K'/\bk$ and $X'\in\cA(K')$ such that $Y\iso X'_L$, then $Y^\si\iso Y$ for all $\si\in\Gal(L/K')$, hence $\Gal(L/K')\sbs G$ and $K\sbs K'$.
%We will call an object $Y\in\cA(L)$ minimal if $\fm(Y)=L$.
%This means that $Y^\si\not\iso Y$ for $1\ne\si\in\Gal(L/\bk)$.
\end{remark}

\begin{remark}
Let $K/\bk$ be a field extension and let $X\in\cA(K)$.
Define the splitting algebra
\begin{equation}
\fs(X)=\Hom(X,X)\qt\rad(X,X).
\end{equation}
This is a semi-simple finite dimensional algebra over $K$, hence it is isomorphic to a product of matrix rings over division algebras by Wedderburn-Artin theorem.
Every such division algebra is finite, hence it is actually a field extension of $K$ by Wedderburn's little theorem.
For any field extension $L/K$, we have by Lemmas \ref{lm:hom tensor}, \ref{lm:rad tensor}
\begin{equation}
\fs(X_L)\iso\fs(X)\ts_KL
\end{equation}
\end{remark}

\begin{lemma}
Let $K/\bk$ be a field extension and $X\in\cA(K)$. Then
\begin{enumerate}
\item $X$ is indecomposable if and only if $\fs(X)$ is a field.
\item
$X$ is absolutely indecomposable if and only if $\fs(X)=K$.
\end{enumerate}
\end{lemma}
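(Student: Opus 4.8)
The plan is to reduce both statements to the characterization of local finite-dimensional algebras recalled in \S\ref{sec:rad}, the Krull--Schmidt property of $\cA(K)$, and the base-change identity $\fs(X_L)\iso\fs(X)\ts_KL$ from the preceding remark, using throughout that every field $K/\bk$ here is finite. For (1), recall first that since $\cA(K)$ is Krull--Schmidt, $X$ is indecomposable precisely when $\End(X)$ is local. By the criterion for finite-dimensional algebras in \S\ref{sec:rad}, $\End(X)$ is local exactly when $\End(X)\qt\rad\End(X)=\fs(X)$ is a division algebra. Finally, because $K$ is finite, $\fs(X)$ is a finite algebra, so any such division algebra is commutative by Wedderburn's little theorem; thus $\fs(X)$ is a division algebra iff it is a field. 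Chaining these equivalences gives (1).

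For the backward direction of (2), suppose $\fs(X)=K$. For an arbitrary field extension $L/K$ the base-change identity gives $\fs(X_L)\iso K\ts_KL\iso L$, which is a field, so $X_L$ is indecomposable by (1). As $L$ was arbitrary, $X$ is absolutely indecomposable.

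For the forward direction, suppose $X$ is absolutely indecomposable. In particular $X$ is indecomposable, so by (1) the algebra $F=\fs(X)$ is a finite field extension of $K$; I must show $F=K$. The key move is to base change to $F$ itself: by the base-change identity, $\fs(X_F)\iso F\ts_KF$. Since $\bk$ is finite, $F/K$ is separable and Galois, so writing $d=[F\rcol K]$ and choosing a primitive element of $F/K$ with minimal polynomial $p$ over $K$, the polynomial $p$ splits into $d$ distinct linear factors over $F$; hence by the Chinese remainder theorem $F\ts_KF\iso\prod_{i=1}^d F$. If $d>1$ this product is not a field, so $X_F$ is decomposable by (1), contradicting absolute indecomposability. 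Therefore $d=1$ and $\fs(X)=F=K$.

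I expect the only genuinely substantive point to be this forward direction of (2): recognizing that one should test indecomposability after base change to the splitting field $F=\fs(X)$ itself rather than to some larger or generic extension, and then using separability of $F/K$ to split $F\ts_KF$ into a nontrivial product of fields. Everything else is a direct assembly of results already recorded in \S\ref{sec:rad} and of the base-change identity, so no further difficulty is anticipated.
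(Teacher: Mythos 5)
Your proof is correct and follows essentially the same route as the paper's: part (1) via the local-algebra criterion plus Wedderburn's little theorem, and part (2) by base-changing to the splitting field $F=\fs(X)$ itself and observing that $\fs(X_F)\iso F\ts_KF$ splits as a product of copies of $F$. The only cosmetic difference is that where the paper explicitly lifts the idempotents of $F\ts_KF$ to $\End(X_F)$, you instead invoke part (1) applied to $X_F$, which encapsulates the same idempotent-lifting argument.
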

\begin{proof}
1. $X$ is indecomposable if and only if $\End(X)$ is a local algebra if and only if $\fs(X)$ is a division algebra.
Finite division algebras are fields.

2. If $\fs(X)=K$, then for every field extension $L/K$ we have $\fs(X_L)\iso \fs(X)\ts_KL\iso L$, hence $X_L$ is indecomposable. We conclude that $X$ is absolutely indecomposable. If $X$ is indecomposable and $L=\fs(X)\ne K$, then $\fs(X_L)\iso L\ts_KL$ is a product of several copies of $L$. The corresponding idempotents can be lifted to $\End(X_L)$, hence $X_L$ is not indecomposable.
\end{proof}

\begin{theorem}
\label{th:splitting}
We have
\begin{enumerate}[ref={\thetheorem\,(\arabic*)}]
%ref={\thetheorem~(\arabic*)}]
\item \label{th:splitting:1}
Let $X\in\cA(\bk)$ be indecomposable and let $K=\fs(X)$.
Then $X_K\iso\bop_{\si\in\Gal(K/\bk)}Y^\si$, for an absolutely indecomposable $Y\in\cA(K)$ with $\fm(Y)=K$.
\item  \label{th:splitting:2}
Let $Y\in\cA(K)$ be
%(absolutely) 
indecomposable with $\fm(Y)=K$.
Then $\bop_{\si\in\Gal(K/\bk)}Y^\si\iso X_K$ 
for an indecomposable object $X\in\cA(\bk)$ with $\fs(X)=\fs(Y)$.
\end{enumerate}
\end{theorem}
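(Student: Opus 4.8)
The plan is to analyse, in both parts, the Galois action of $G=\Gal(K/\bk)$ on the indecomposable summands of $X_K$, and to pass information between $\cA(K)$ and $\cA(\bk)$ using the descent results of Lemma~\ref{lm:descent} and Lemma~\ref{lem:iso descent} together with the splitting-algebra formula $\fs(X_L)\iso\fs(X)\ts_KL$. Throughout I will use that, as $\bk$ is finite, every extension $K/\bk$ is Galois, and that the conjugation functors compose as $(Z^\si)^\ta\iso Z^{\si\ta}$ (from $\rho_{\si\ta}\iso\rho_\si\rho_\ta$), so $G$ genuinely acts on isomorphism classes in $\cA(K)$; an object defined over a subfield has a $G$-invariant class, and a $G$-invariant class descends by Lemma~\ref{lm:descent}.

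For part~(1), I would first compute the splitting algebra of $X_K$. Since $X$ is indecomposable, $\fs(X)=K$ is a field, and the formula gives $\fs(X_K)\iso K\ts_\bk K\iso\prod_{\si\in G}K$, a product of $N:=[K\col\bk]=\n{G}$ copies of $K$. Reading this decomposition through the additivity of the radical and the identity $\rad(Y_i,Y_j)=\Hom(Y_i,Y_j)$ for non-isomorphic indecomposables (\S\ref{sec:rad}), I conclude that $X_K$ is a direct sum of exactly $N$ pairwise non-isomorphic objects $Y_1,\dots,Y_N$, each with splitting algebra $K$, hence each absolutely indecomposable. Since $X_K^\si\iso X_K$, applying $(-)^\si$ to this decomposition and invoking Krull--Schmidt shows that $G$ permutes $S=\set{[Y_1],\dots,[Y_N]}$. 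Fixing $Y=Y_1$ with orbit $O\sbs S$ and stabiliser $H\sbs G$, both the orbit sum $\bop_{[Y']\in O}Y'$ and its complement are $G$-invariant, hence descend to objects of $\cA(\bk)$ by Lemma~\ref{lm:descent}; Lemma~\ref{lem:iso descent} then gives a splitting $X\iso X_1\oplus X_2$ over $\bk$, and indecomposability of $X$ forces the complementary summand to vanish, so $S=O$ is a single orbit. Comparing cardinalities $\n{O}=\n{S}=N=\n{G}$ forces $H=\set{1}$, so the action is free, $\fm(Y)=K^H=K$, and $X_K\iso\bop_{\si\in G}Y^\si$, as required.

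For part~(2), the construction runs in reverse. Since $\fm(Y)=K$, the stabiliser of $[Y]$ in $G$ is trivial (Remark~\ref{rem:min field}), so the objects $Y^\si$, $\si\in G$, are pairwise non-isomorphic and $G$ acts transitively on them. The sum $W=\bop_{\si\in G}Y^\si$ satisfies $W^\ta\iso W$ for all $\ta\in G$ (reindex $\si\mapsto\si\ta$), so Lemma~\ref{lm:descent} produces $X\in\cA(\bk)$ with $X_K\iso W$. To see that $X$ is indecomposable, note any splitting $X\iso X_1\oplus X_2$ base-changes to a splitting of $W$ whose two summands have $G$-invariant sets of indecomposable constituents partitioning $\set{[Y^\si]}$; transitivity forces one of these sets to be empty, i.e. one summand base-changes to $0$ and so is $0$ by Lemma~\ref{lem:iso descent}. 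Finally $\fs(X)\iso\fs(Y)$ follows from a dimension count: additivity of the radical gives $\fs(X_K)\iso\prod_{\si\in G}\fs(Y^\si)$, and since each functor $(-)^\si$ fixes $\bk$ we have $\dim_\bk\fs(Y^\si)=\dim_\bk\fs(Y)$; comparing with $\dim_\bk\fs(X_K)=\dim_\bk\fs(X)\cdot[K\col\bk]$ yields $\dim_\bk\fs(X)=\dim_\bk\fs(Y)$, and finite fields of equal $\bk$-dimension coincide.

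I expect the main obstacle to be the orbit analysis in part~(1): extracting from the abstract identity $\fs(X_K)\iso K\ts_\bk K$ the concrete statement that $X_K$ is a multiplicity-free sum of $\n{G}$ absolutely indecomposables, and then leveraging indecomposability of $X$ through descent of the orbit-complement summand to force a single free orbit (equivalently $\fm(Y)=K$). The descent steps are routine given Lemma~\ref{lm:descent} and Lemma~\ref{lem:iso descent}, and the dimension bookkeeping in part~(2) is elementary once the radical is known to be additive.
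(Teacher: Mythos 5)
Your proof is correct and follows essentially the same route as the paper: compute $\fs(X_K)\iso\prod_{\si\in\Gal(K/\bk)}K$, deduce a multiplicity-free decomposition of $X_K$ into $[K:\bk]$ pairwise non-isomorphic absolutely indecomposable summands, and combine the $G$-action with Lemmas~\ref{lm:descent} and~\ref{lem:iso descent} and indecomposability of $X$ to force a single free orbit (and run the construction in reverse for part~(2)). The only notable divergence is the conclusion of part~(2), where the paper identifies $\fs(X)$ with the $G$-invariants of $\fs(X_K)$ while you use a Krull--Schmidt transitivity argument plus a $\bk$-dimension count; your variant has the small advantage of explicitly covering indecomposable $Y$ that are not absolutely indecomposable, a case the paper's displayed computation ($\fs(Y^\si)=K$) silently specialises away.
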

\begin{proof}
1.
We have
$\fs(X_K)\iso\fs(X)\ts_\bk K\iso K\ts_\bk K\iso\prod_{\si\in G}K$, where $G=\Gal(K/\bk)$.
Lifting the corresponding idempotents, we obtain a decomposition $X_K\iso\bop_{\si\in G}Y_\si$, where $Y_\si\in\cA(K)$ are non-isomorphic objects
 (otherwise $\fs(X_K)$ would contain the ring of matrices of order $\ge2$)
with $\fs(Y^\si)=K$, hence they are absolutely indecomposable.
The action of $G$ on $X_K$ (more precisely, for every $\si\in G$, we have an isomorphism $X_K\to X_K^\si$)
induces the action on the set of isomorphism classes of $Y_\si$.
If this action is non-free, then we get a non-trivial decomposition $X_K=Y_1\oplus Y_2$ with both $Y_1,Y_2$ descending to $\bk$ by Lemma~\ref{lm:descent}.
This contradicts to the assumption that $X$ is indecomposable.
We can assume that $Y_\si\iso Y^\si$, where $Y=Y_1$.
Then $Y^\si\not\iso Y$, for $\si\ne1$, hence the minimal field of definition of $Y$ is $K$.

2.
Let $G=\Gal(K/\bk)$ and $X'=\bop_{\si\in G}Y^\si$.
Then $(X')^\si\iso X'$ for all $\si\in G$, hence by Lemma \ref{lm:descent} there exists $X\in\cA(\bk)$ such that $X_K\iso X'$ (we actually have a descent data on~$X'$, hence can obtain the object $X$ directly).
The objects $Y^\si$ are not isomorphic to each other as otherwise $Y$ would have a smaller field of definition.
Therefore $\fs(X')=\prod_{\si\in G}\fs(Y^\si)=\prod_{\si\in G} K$, hence $\fs(X)\iso \fs(X')^{G}\iso K$ and $X$ is indecomposable.
\end{proof}

\subsection{Scalar extension}
The previous results imply that a linear stack $\cA$ over a field $\bk$ is completely determined by the category $\cA(\bk)$.
Let us discuss this in more detail (\cf \cite{dobrovolska_moduli}).

Let $K$ be a field and $\cC$ be an additive $K$-linear category.
Given a finite field extension $L/K$, 
define the scalar extension $\cC_L$ of \cC \cite{artin_abstract,lowen_deformation,sosna_scalar,stalder_scalar,dobrovolska_moduli}
to be the category with objects $(X,\vi)$, where $X\in\cC$ and $\vi:L\to\End_\cC(X)$ is a homomorphism of $K$-algebras.
A morphism $f:(X,\vi)\to (Y,\vi')$ in $\cC_L$ is an element $f\in\Hom_\cC(X,Y)$ such that $\vi'(a)f=f\vi(a)$ for all $a\in L$.
We can interpret $\cC_L$ as the category of $K$-linear functors $\Fun_K(L,\cC)$, where $L$ is considered as the category with one object and the ring of endomorphisms $L$.
If \cC is Karoubian, then $\cC_L$ is also Karoubian.

The forgetful functor $\cC_L\to\cC$ has a left adjoint functor 
$(-)_L:\cC\to \cC_L$
constructed as follows.
Let $\mmod K$ denote the category of finite-dimensional vector spaces over~$K$.
Then, for every object $X\in\cC$, there exists a unique (up to a canonical natural isomorphism) $K$-linear functor~\cite{artin_abstract,lowen_deformation}
$$-\ts_K X:\mmod K\to\cC$$
such that $K\ts_KX=X$.
Given $V\in\mmod K$ and $X,Y\in\cC$, we have
$$\Hom_\cC(V\ts_K X,Y)\iso \Hom_K(V,\Hom_\cC(X,Y)).$$
Every element $a\in L$ induces a $K$-linear map $L\to L$, hence a morphism $\vi_X(a):L\ts_KX\to L\ts_KX$ in \cC.
In this way we obtain a functor
$$\rho_{L/K}=(-)_L:\cC\to\cC_L,\qquad
X\mto X_L=(L\ts_K X,\vi_X).$$

\begin{lemma}[see {\cite{artin_abstract,lowen_deformation,sosna_scalar}}]
The functor $(-)_L:\cC\to\cC_L$ is left adjoint to the forgetful functor $\cC_L\to\cC$.
\end{lemma}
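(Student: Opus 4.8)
The plan is to exhibit, for every $X\in\cC$ and every object $(Y,\psi)\in\cC_L$, a natural bijection
\begin{equation*}
\Hom_{\cC_L}(X_L,(Y,\psi))\iso\Hom_\cC(X,Y),
\end{equation*}
where the right-hand side is $\Hom$ of the underlying objects after applying the forgetful functor to $(Y,\psi)$. Recall $X_L=(L\ts_KX,\vi_X)$, so a morphism $X_L\to(Y,\psi)$ is an element $f\in\Hom_\cC(L\ts_KX,Y)$ that intertwines the two $L$-actions, meaning $\psi(a)f=f\vi_X(a)$ for all $a\in L$. First I would describe the unit of the adjunction: the element $1\in L$ gives, via the defining isomorphism $K\ts_KX\iso X$ and the structural map $K\ts_KX\to L\ts_KX$, a canonical morphism $\eta_X:X\to L\ts_KX$ in \cC (the inclusion of the ``$1\ts X$'' summand). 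The candidate bijection sends $f\mto f\circ\eta_X$.

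\textbf{Constructing the inverse.} For surjectivity and injectivity I would use the universal property of $L\ts_KX$ recorded just above the statement, namely
\begin{equation*}
\Hom_\cC(L\ts_KX,Y)\iso\Hom_K(L,\Hom_\cC(X,Y)).
\end{equation*}
Given $g\in\Hom_\cC(X,Y)$, I want to produce the unique $L$-equivariant $f$ with $f\eta_X=g$. The point is that $L$-equivariance pins down $f$ completely: the compatibility $\psi(a)f=f\vi_X(a)$ forces the image of the basis vector $a\ts x$ (informally) to be $\psi(a)$ applied to $g(x)$. Concretely, under the universal property a morphism $f$ corresponds to a $K$-linear map $L\to\Hom_\cC(X,Y)$, and the equivariance condition translates into the requirement that this map be determined by its value at $1\in L$, namely $a\mto\psi(a)\circ g$. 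This produces a well-defined $f$ because $\psi$ is a $K$-algebra homomorphism into $\End_\cC(Y)$, so $a\mto\psi(a)g$ is $K$-linear in $a$; and one checks directly that the resulting $f$ is indeed $L$-equivariant and satisfies $f\eta_X=g$. Uniqueness follows because any equivariant $f$ must agree on the image of $\eta_X$ and the equivariance spreads this value across all of $L\ts_KX$ via the $\vi_X$-action, which by construction realizes multiplication in $L$.

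\textbf{Naturality.} Finally I would verify naturality in both $X$ and $(Y,\psi)$, which is routine: on the $Y$-side it amounts to post-composition commuting with the equivariance condition, and on the $X$-side it uses functoriality of $-\ts_KX$ together with the fact that $\eta$ is a natural transformation $\mathrm{id}_\cC\Rightarrow\text{(forgetful)}\circ(-)_L$.

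\textbf{Main obstacle.} The only genuinely delicate point is verifying that the prescription $a\mto\psi(a)\circ g$ really does yield a morphism in \cC via the universal property and that the resulting $f$ is $L$-equivariant for the $\vi_X$-action rather than merely $K$-linear. This requires matching the two pieces of structure carefully: the $L$-action $\vi_X$ on the source comes from $L$ acting on itself by multiplication (transported through $-\ts_KX$), while the $L$-action $\psi$ on the target is intrinsic, and one must check that the universal map built from $a\mto\psi(a)g$ intertwines them. This is a bookkeeping verification using that $-\ts_KX$ is a $K$-linear functor and that $\vi_X(a)$ is by definition the image of the multiplication-by-$a$ map on $L$; no conceptual difficulty arises, but it is the step where the definitions must be unwound precisely.
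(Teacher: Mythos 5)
Your proof is correct. Note that the paper does not actually prove this lemma: it defers to the cited references (Artin--Zhang, Lowen--Van den Bergh, Sosna), so there is no in-paper argument to compare against; what you give is the standard direct verification from the universal property $\Hom_\cC(V\ts_K X,Y)\iso\Hom_K(V,\Hom_\cC(X,Y))$ that the paper records just before the lemma. Your structure is complete: the unit $\eta_X\col X=K\ts_KX\to L\ts_KX$ induced by the inclusion $K\emb L$, the bijection $f\mto f\circ\eta_X$, and the inverse sending $g$ to the morphism classified by the $K$-linear map $a\mto\psi(a)\circ g$. The point you flag as the main obstacle is indeed the only real computation, and it goes through: writing $\iota_a\col X\to L\ts_KX$ for the morphism induced by $1\mto a$, functoriality of $-\ts_KX$ gives $\vi_X(a)\circ\iota_b=\iota_{ab}$, so both sides of the equivariance identity $\psi(a)f=f\vi_X(a)$ classify the same map $b\mto\psi(ab)\circ g$ and hence agree, while uniqueness follows since any equivariant $f$ satisfies $f\circ\iota_a=\psi(a)\circ(f\circ\eta_X)$ and is therefore determined by $f\circ\eta_X$ via the universal property.
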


\begin{lemma}
\label{lm:hom tensor1}
For any objects $X,Y\in\cC$, we have
$$\Hom_{\cC_L}(X_L,Y_L)\iso L\ts_K\Hom_\cC(X,Y).$$
\end{lemma}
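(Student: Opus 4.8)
The plan is to deduce the isomorphism from the adjunction just established, combined with a standard compatibility of the functor $-\ts_K(-)$ with \Hom-spaces. First I would invoke the preceding lemma: since $(-)_L$ is left adjoint to the forgetful functor $U:\cC_L\to\cC$, the adjunction applied with second variable $Y_L$ gives a natural isomorphism
$$\Hom_{\cC_L}(X_L,Y_L)\iso\Hom_\cC(X,U(Y_L))=\Hom_\cC(X,L\ts_KY),$$
where the last equality holds because $U(Y_L)$ is precisely the underlying object $L\ts_KY$ of $Y_L$ with its $L$-algebra structure $\vi_Y$ forgotten.

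The second step is to identify $\Hom_\cC(X,L\ts_KY)$ with $L\ts_K\Hom_\cC(X,Y)$. For this I would prove the more general natural isomorphism
$$\Hom_\cC(X,V\ts_KY)\iso V\ts_K\Hom_\cC(X,Y),\qquad V\in\mmod K.$$
It holds for $V=K$ by the defining property $K\ts_KY=Y$ of the functor $-\ts_KY$, and it extends to an arbitrary finite-dimensional $V$ by choosing a basis to write $V\iso K^{\oplus n}$ and using that both $-\ts_KY$ and $\Hom_\cC(X,-)$ commute with finite direct sums (additivity of \cC). Taking $V=L$ and composing with the isomorphism of the first step yields $\Hom_{\cC_L}(X_L,Y_L)\iso L\ts_K\Hom_\cC(X,Y)$.

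Finally I would verify that the composite is $L$-linear, so that it is genuinely an isomorphism of $L$-modules compatible with the $L$-linear enrichment of $\cC_L$. The $L$-action on the left-hand side arises from postcomposition with the structure morphism $\vi_Y:L\to\End_\cC(L\ts_KY)$; under the adjunction this corresponds to the $L$-action on $U(Y_L)=L\ts_KY$ given by multiplication on the tensor factor $L$, which in turn matches the $L$-action on the first factor of $L\ts_K\Hom_\cC(X,Y)$. The two formal ingredients — the adjunction and the direct-sum reduction — are routine, so I expect the only point requiring genuine care to be this last bookkeeping: one must use naturality of the adjunction isomorphism in the variable $Y_L$ to confirm that the $L$-action coming from the $\cC_L$-enrichment is carried to the $L$-action coming from the tensor factor.
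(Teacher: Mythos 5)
Your proof follows exactly the paper's route: the paper's own proof is the same two-step chain, first applying the adjunction $\Hom_{\cC_L}(X_L,Y_L)\iso\Hom_\cC(X,L\ts_K Y)$ from the preceding lemma, then identifying $\Hom_\cC(X,L\ts_K Y)\iso L\ts_K\Hom_\cC(X,Y)$. The only difference is that you spell out the second isomorphism (via a basis of $V$ and additivity) and the $L$-linearity bookkeeping, which the paper leaves implicit; this is a correct and harmless elaboration.
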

\begin{proof}
By the previous result we have
$$\Hom_{\cC_L}(X_L,Y_L)\iso
\Hom_\cC(X,L\ts_K Y)\iso L\ts_K\Hom_\cC(X,Y).$$
\end{proof}

\begin{lemma}[\cf{} {\cite[Lemma 2.7]{sosna_scalar}}]
\label{lm:sosna galois}
Let \cC be an additive $K$-linear Karoubian category and
let $L/K$ be a finite Galois extension with the Galois group $G$.
Then $G$ acts on $\cC_L$ and there is an equivalence between the category \cC and the category of Galois descent data in $\cC_L$ (see Remark \ref{rem:descent data}).
\end{lemma}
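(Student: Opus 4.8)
The plan is to exhibit mutually quasi-inverse functors between $\cC$ and the category $\cD$ of Galois descent data in $\cC_L$, the equivalence being (a corestriction of) the scalar-extension functor $(-)_L$. First I would fix the $G$-action: for $\si\in G$ set $\rho_\si(X,\vi)=(X,\vi\circ\si\inv)$ and let $\rho_\si$ be the identity on underlying $\cC$-morphisms. A one-line check gives $\rho_\si\rho_\ta=\rho_{\si\ta}$, so this is a strict left action, and $\rho_\si\inv(Y)=Y^\si$ agrees with the convention of Remark \ref{rem:descent data}. Then I would define $D\col\cC\to\cD$ by sending $X$ to $X_L=(L\ts_K X,\vi_X)$ equipped with the canonical descent datum $\vi_\si=\si\ts\mathrm{id}_X\col X_L\to X_L^\si$ (the twist acting through the $L$-factor); a direct computation shows each $\vi_\si$ is a morphism in $\cC_L$ and that the family $(\vi_\si)$ satisfies the cocycle identity of Remark \ref{rem:descent data}.

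Next I would prove that $D$ is fully faithful. A morphism $DX\to DY$ in $\cD$ is a morphism $X_L\to Y_L$ in $\cC_L$ commuting with the canonical descent data, \ie a $G$-invariant element of $\Hom_{\cC_L}(X_L,Y_L)$. By Lemma \ref{lm:hom tensor1} this $\Hom$-space is $L\ts_K\Hom_\cC(X,Y)$, and the canonical descent datum induces on it exactly the standard semilinear $G$-action through the $L$-factor. Galois descent for vector spaces — the same input already used in the proof of Lemma \ref{lm:hom tensor} — identifies the invariants $(L\ts_K\Hom_\cC(X,Y))^G$ with $\Hom_\cC(X,Y)$, whence $D$ is fully faithful.

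The heart of the argument, and the step I expect to be the main obstacle, is essential surjectivity (effectivity of descent). Given $(Y,(\vi_\si))$ with $Y=(W,\psi)$, let $g_\si\in\Aut_\cC(W)$ be the underlying automorphisms of the $\vi_\si$; the descent axioms translate into a semilinear $G$-action on $(W,\psi)$, namely $g_\si\psi(a)=\psi(\si a)g_\si$ together with the cocycle relation. Since any descended object $X$ must satisfy $L\ts_K X\iso W$ in $\cC$, it has to appear as a direct summand of $W$ cut out by a $K$-linear idempotent $e\in\End_\cC(W)$. I would build $e$ as the usual Galois-descent projector assembled from the $g_\si$ and a dual basis of the trace form of $L/K$; the key point is that separability of $L/K$ makes this projector idempotent without inverting $[L\rcol K]$, so the construction is valid in all characteristics (this is where the normal basis theorem, rather than any averaging, enters). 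Because $\cC$ is Karoubian I can then split $e$ and set $X=\im(e)$.

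It remains to identify $DX$ with $(Y,(\vi_\si))$. Composing the split inclusion $X\emb W$ with the multiplication $\mu\col L\ts_K W\to W$, $a\ts w\mto\psi(a)w$, yields a morphism $X_L\to Y$ in $\cC_L$ that intertwines the canonical descent datum on $X_L$ with the given one on $Y$; that it is an isomorphism reduces on $\Hom$-spaces to the same vector-space Galois descent used above. Thus $D$ is an equivalence onto $\cD$. The genuinely delicate point throughout is the third step: producing the descent idempotent $e$ in a characteristic-free manner and having a Karoubian category in which to split it; the remaining compatibility checks are formal consequences of Lemmas \ref{lm:hom tensor1} and \ref{lm:hom tensor}. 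This recovers \cite[Lemma 2.7]{sosna_scalar}.
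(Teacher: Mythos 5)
Your proposal is correct, but there is nothing in the paper to compare it against: the paper does not prove this lemma at all, it imports it from \cite[Lemma 2.7]{sosna_scalar} and then uses it as a black box (in the proof that $\hat\cC_L\iso\cC_L$ and in the scalar-extension lemma for $\cA(K)$). Your argument is a correct, self-contained reconstruction of the standard proof, and it is essentially the argument behind the cited reference: the strict $G$-action $\rho_\si(X,\vi)=(X,\vi\circ\si\inv)$, the canonical descent datum $\si\ts\mathrm{id}_X$ on $X_L$, full faithfulness from Lemma \ref{lm:hom tensor1} together with $(L\ts_KV)^G=V$, and, for effectivity, a descent projector split using Karoubianness. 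Concretely, the projector you gesture at is $e=\sum_{\si\in G}g_\si\circ\psi(c)\in\End_\cC(W)$, where $c\in L$ satisfies $\Tr_{L/K}(c)=1$: the semilinearity relation $g_\si\psi(a)=\psi(\si a)g_\si$ and the cocycle identity $g_{\si\ta}=g_\si g_\ta$ give $e^2=\sum_{\rho\in G}g_\rho\psi\rbr{\Tr_{L/K}(c)\,c}=e$ and $g_\si e=e$, so the split monomorphism $\iota\col X=\im(e)\emb W$ is $G$-invariant, which is exactly what makes your comparison map $X_L\to(W,\psi)$ a morphism of descent data; that it is an isomorphism follows, as you say, by applying $\Hom_\cC(T,-)$ for every $T\in\cC$ (so that it becomes the classical comparison map $L\ts_K\Hom_\cC(T,W)^G\to\Hom_\cC(T,W)$) and invoking Yoneda. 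One correction of attribution: the input needed for the projector is surjectivity of the trace map, equivalently non-degeneracy of the trace form, which is precisely separability of $L/K$; the normal basis theorem you invoke is a strictly stronger statement and plays no role here. With that adjustment, your outline is a complete, characteristic-free proof of the lemma the paper leaves to the literature.
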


On the other hand, let $\cC'_L$ 
%(\cf \cite{lowen_deformation})
be the category with the same objects as \cC and with morphisms 
$$\Hom_{\cC'_L}(X,Y)=L\ts_K\Hom_\cC(X,Y).$$
This category is not necessarily Karoubian even if \cC is.
We define $\hat\cC_L$ to be the Karoubian completion of $\cC'_L$.
It consists of objects $(X,e)$, where $X\in\cC'_L$ and $e\in\End_{\cC'_L}(X)$ is an idempotent.
Morphisms are defined by $$\Hom_{\hat\cC_L}((X,e),(Y,e'))=e'\Hom_{\cC'_L}(X,Y)e.$$
We identify $\cC_L'$ with the full subcategory of $\hat\cC_L$
by sending $X$ to $(X,1_X)$.
%using the full and faithful functor
%$$\cC_L'\to\hat\cC_L,\qquad X\mto (X,1_X).$$
The category $\hat\cC_L$ has splitting idempotents.
For example, for any idempotent $e\in\End(X)$ we have
$$(X,1_X)\iso(X,e)\oplus(X,1_X-e).$$

\begin{lemma}
The functor $(-)_L:\cC\to\cC_L$ induces a full and faithful $L$-linear functor
$$(-)_L:\cC'_L\to\cC_L.$$
If \cC is Karoubian and $L/K$ is a finite Galois field extension, this functor extends to an equivalence of categories $\hat\cC_L\isoto\cC_L$.
\end{lemma}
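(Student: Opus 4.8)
The plan is to prove the two assertions in turn: full faithfulness of $\cC'_L\to\cC_L$ is formal, whereas the equivalence $\hat\cC_L\isoto\cC_L$ rests on separability of $L/K$.

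First I would pin down the functor. On objects it agrees with $(-)_L\colon\cC\to\cC_L$, namely $X\mto X_L$, since $\cC'_L$ and $\cC$ share the same objects. On morphisms, $(-)_L$ carries $\Hom_\cC(X,Y)$ into the $L$-module $\Hom_{\cC_L}(X_L,Y_L)$, so it extends uniquely to an $L$-linear map $\Hom_{\cC'_L}(X,Y)=L\ts_K\Hom_\cC(X,Y)\to\Hom_{\cC_L}(X_L,Y_L)$. By Lemma~\ref{lm:hom tensor1} this map is an isomorphism; after a routine check (using commutativity of $L$) that it respects composition and identities, I obtain an $L$-linear, fully faithful functor $F\colon\cC'_L\to\cC_L$ whose essential image is the full subcategory of $\cC_L$ spanned by the objects $X_L$.

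Next I would extend $F$ to the Karoubian completion. Since $\cC$ is Karoubian, so is $\cC_L$ (as recorded above), hence every idempotent in $\cC_L$ splits. By the universal property of idempotent completion, $F$ extends to $\hat\cC_L\to\cC_L$, sending $(X,e)$ to the image of the split idempotent $F(e)$ on $X_L$; full faithfulness is inherited because the Hom-spaces of $\hat\cC_L$ are the truncations $e'\Hom_{\cC'_L}(X,Y)e$, which the isomorphism of Lemma~\ref{lm:hom tensor1} matches with $F(e')\Hom_{\cC_L}(X_L,Y_L)F(e)$. It then remains only to prove essential surjectivity. For this I would reduce to the claim that every $(X,\vi)\in\cC_L$ is a direct summand of $X_L$, where $X$ is its underlying object: such a summand is cut out by an idempotent on $X_L$ and hence lies in the image of $\hat\cC_L$. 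To prove the claim I would use the counit $\eps\colon X_L\to(X,\vi)$ of the adjunction $(-)_L\dashv U$ (with $U$ the forgetful functor), which corresponds to $1_X$ under $\Hom_{\cC_L}(X_L,(X,\vi))\iso\End_\cC(X)$, and build a section of it from a separability idempotent $e=\sum_i a_i\ts b_i\in L\ts_K L$, available because the Galois (hence separable) extension $L/K$ makes $L$ a separable $K$-algebra. Transporting $e$ through the action $\vi$ and the functor $-\ts_K X$ yields a morphism $s\colon(X,\vi)\to X_L$ (the categorical avatar of $x\mto\sum_i a_i\ts\vi(b_i)x$); the relation $\sum_i a_ib_i=1$ gives $\eps s=1$, while the relation $\sum_i ca_i\ts b_i=\sum_i a_i\ts b_ic$ for all $c\in L$ is exactly what makes $s$ commute with the $L$-actions, i.e.\ a morphism in $\cC_L$. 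As $\cC_L$ is Karoubian, the retract $(X,\vi)$ of $X_L$ is a genuine summand, which finishes essential surjectivity and the equivalence.

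The hard part will be this last step. The formal parts --- defining $F$, checking full faithfulness via Lemma~\ref{lm:hom tensor1}, and extending to $\hat\cC_L$ --- are bookkeeping. The substance is essential surjectivity, and it is precisely here that the hypothesis on $L/K$ is indispensable: the statement that every $(X,\vi)$ splits off $X_L$ is the categorical form of the classical characterization of separable algebras by splitting of the multiplication $L\ts_K L\to L$. The delicate point is to verify that the separability idempotent produces a morphism $s$ genuinely in $\cC_L$, and not merely in $\cC$ --- that is, that $s$ is $L$-linear --- and to make the element-style formula for $s$ precise in the enriched setting through the functor $-\ts_K X$, where literal elements are unavailable.
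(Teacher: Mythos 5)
Your proof is correct, but your treatment of essential surjectivity takes a genuinely different route from the paper's. The paper's own argument is Galois-theoretic: given $X=(X_0,\vi)\in\cC_L$, it forms the twists $X^\si=(X_0,\vi\si)$ for $\si\in G=\Gal(L/K)$, observes that $\bop_{\si\in G}X^\si$ carries a natural Galois descent datum, and invokes Lemma~\ref{lm:sosna galois} to conclude that this sum is of the form $Y_L$ with $Y\in\cC$, so that $X$ is a summand of an object of $\cC'_L$. You instead split $(X,\vi)$ directly off $X_L$ by building a section of the counit $\eps\col X_L\to(X,\vi)$ from a separability idempotent $e=\sum_i a_i\ts b_i\in L\ts_KL$; your verifications ($\eps s=1$ from $\sum_i a_ib_i=1$, and $L$-linearity of $s$ from $\sum_i ca_i\ts b_i=\sum_i a_i\ts b_ic$) are the right ones, and the element-free formulation is available since $s$ can be defined as the image of $\sum_i a_i\ts\vi(b_i)$ under $L\ts_K\End_\cC(X)\iso\Hom_\cC(X,L\ts_KX)$, with both identities checked after this identification. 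The trade-off: the paper's argument is shorter because Lemma~\ref{lm:sosna galois} is already in hand, but it genuinely uses the Galois hypothesis; your argument bypasses that lemma entirely and works verbatim for any finite separable extension $L/K$, which the paper only asserts without proof in the remark following the lemma. The formal parts of your proposal (full faithfulness via Lemma~\ref{lm:hom tensor1}, the extension to $\hat\cC_L$ by the universal property of Karoubian completion, and the reduction of essential surjectivity to exhibiting $(X,\vi)$ as a retract of $X_L$) coincide with the paper's.
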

\begin{proof}
The first statement follows from Lemma \ref{lm:hom tensor1}.
%We can identify $\cC'_L$ with the full subcategory of $\cC_L$.
If $\cC$ is Karoubian, then $\cC_L$ is Karoubian and the functor $\cC_L'\to\cC_L$ extends to a full and faithful functor $\hat\cC_L\to\cC_L$.
Let $X=(X_0,\vi)\in\cC_L$.
Define $X^\si=\rho_\si\inv(X)=(X_0,\vi \si)\in\cC_L$ for $\si\in G=\Gal(L/K)$.
Then $\bop_{\si\in G}X^\si$ is naturally equipped with the Galois descent data, hence it is of the form $Y_L$ for some $Y\in\cC$ by Lemma~\ref{lm:sosna galois}.
This implies that $X$ is a direct summand of an object from $\cC'_L$.
\end{proof}

\begin{remark}
One can also prove the above equivalence 
of categories for a separable field extension $L/K$.
\end{remark}

\begin{lemma}
Let $\cA$ be a linear stack over a finite field $\bk$ and let $\cC=\cA(\bk)$.
Then, for any finite field extension $K/\bk$, we have $\cA(K)\iso\hat\cC_K\iso\cC_K$.
\end{lemma}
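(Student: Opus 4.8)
The plan is to prove the two equivalences separately, treating $\hat\cC_K\iso\cC_K$ as essentially already done and concentrating the work on $\cA(K)\iso\hat\cC_K$. For the first equivalence, note that $\cC=\cA(\bk)$ is additive, $\bk$-linear and Karoubian (this is built into the definition of a linear stack) and that $K/\bk$ is a finite Galois extension, automatically so because $\bk$ is finite. Hence the preceding lemma establishing $\hat\cC_L\isoto\cC_L$ for $\cC$ Karoubian and $L/K$ finite Galois applies verbatim and yields $\hat\cC_K\isoto\cC_K$. So the whole content lies in producing the equivalence $\cA(K)\iso\hat\cC_K$.

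First I would build a comparison functor out of the restriction functor $\rho_{K/\bk}\col\cC=\cA(\bk)\to\cA(K)$, $X\mto X_K$. By Lemma~\ref{lm:hom tensor} we have $\Hom_{\cA(K)}(X_K,Y_K)\iso\Hom_\cC(X,Y)\ts_\bk K$, which is exactly $\Hom_{\cC'_K}(X,Y)$ by the definition of $\cC'_K$, and one checks that this identification is compatible with composition (both sides being the $K$-bilinear extension of the $\bk$-bilinear composition in $\cC$, transported through the $\bk$-linear functor $\rho_{K/\bk}$). Thus $\rho_{K/\bk}$ factors through a full and faithful $K$-linear functor $\cC'_K\to\cA(K)$. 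Since $\cA(K)$ is Karoubian and $\hat\cC_K$ is the Karoubian completion of $\cC'_K$, the universal property of the completion extends this to a full and faithful functor $F\col\hat\cC_K\to\cA(K)$.

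The main step is the essential surjectivity of $F$, and this is where the stack structure enters through Galois descent. Given $Z\in\cA(K)$, I would form $W=\bop_{\si\in G}Z^\si$ with $G=\Gal(K/\bk)$. Because the $G$-action permutes the conjugates, $W^\ta\iso W$ for every $\ta\in G$, so by Lemma~\ref{lm:descent} there exists $X\in\cA(\bk)$ with $X_K\iso W$. In particular $Z$ is a direct summand of $X_K=F((X,1_X))$. Since $F$ is full and faithful and $\hat\cC_K$ is Karoubian, its essential image is closed under direct summands: the idempotent of $\End_{\cA(K)}(X_K)$ cutting out $Z$ transports to an idempotent of $\hat\cC_K$, which splits there, and the image of the resulting summand under $F$ is isomorphic to $Z$. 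Hence $Z$ lies in the essential image, $F$ is essentially surjective, and therefore an equivalence.

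I expect essential surjectivity to be the crux. It is the only point that genuinely uses that $\cA$ is a stack rather than merely a compatible family of categories (via the descent Lemma~\ref{lm:descent}), and it is precisely what forces $\bk$ to be finite, so that $K/\bk$ is Galois and the finite orbit $\set{Z^\si}$ can be assembled into an object descending to $\bk$. The remaining checks — that the induced functor on $\cC'_K$ is full and faithful and respects composition, and that a full and faithful functor out of a Karoubian category has essential image closed under summands — are formal and follow from Lemma~\ref{lm:hom tensor} together with the universal property of the Karoubian completion.
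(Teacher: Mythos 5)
Your proposal is correct and follows essentially the same route as the paper: the second equivalence is quoted from the preceding lemma, full faithfulness of $\hat\cC_K\to\cA(K)$ comes from Lemma~\ref{lm:hom tensor} plus Karoubianness of $\cA(K)$, and essential surjectivity is obtained by realizing any $Z\in\cA(K)$ as a direct summand of $\bop_{\si\in\Gal(K/\bk)}Z^\si$, which descends to $\bk$. The only cosmetic difference is that the paper notes this Galois orbit sum carries canonical descent data (invoking the stack axioms directly), whereas you invoke Lemma~\ref{lm:descent}; both are valid, and indeed the paper itself remarks elsewhere that the two routes are interchangeable.
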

\begin{proof}
The second equivalence was proved in the previous lemma.
The restriction functor $\rho_{K/\bk}:\cC=\cA(\bk)\to\cA(K)$ induces a full and faithful functor $\cC_K'\to\cA(K)$ by Lemma
\ref{lm:hom tensor}.
It extends to a full and faithful functor $\hat\cC_K\to\cA(K)$ as $\cA(K)$ is Karoubian.
Given any object $X\in\cA(K)$, the object $\bop_{\si\in\Gal(K/\bk)}X^\si\in\cA(K)$ is naturally equipped with the Galois descent data, hence it is of the form $Y_K$ for some $Y\in\cA(\bk)$.
This implies that $X$ is a direct summand of an object from $\cC'_K$.
\end{proof}

\section{Volumes of linear stacks}
\label{sec:volumes of stacks}
Let \cA be a linear stack over a finite field $\bk=\bF_q$.
All field extensions are assumed to be finite.
Given an (essentially small) category \cC, let $\cC\qt\sim$ be the set of isomorphism classes of objects of~\cC.
As in the introduction, we assume that there is a lattice $\Ga\iso\bZ^n$ and group homomorphisms $\cls:K_0(\cA(K))\to\Ga$, for field extensions $K/\bk$, such that 
\begin{enumerate}
\item $\cls(X_L)=\cls(X)$, for field extensions $L/K/\bk$ and $X\in\cA(K)$.
\item
The set $\sets{X\in\cA(K)\qt\sim}{\cls(X)=\bd}$
is finite, for every  $\bd\in\Ga$.
%isomorphism classes of objects.
\item
There exists a closed convex cone $C\sbs\Ga_\bR=\Ga\ts_\bZ\bR$ which is pointed ($C\cap(-C)=\set0$) and such that $\cls(X)\in C$ for all $X\in\cA(K)$.
This allows us to work with power series counting objects in \cA.
\end{enumerate}
Define the (weighted) volume series of \cA
\begin{equation}
\label{eq:vol series1}
\sH_\cA(t)=\sum_\bd[\sH_{\bd}]t^\bd,\qquad
\sH_{\bd}(K)
=\sum_{\ov{X\in\cA(K)\qt\sim}{\cls X=\bd}}\frac1{\#\Aut(X)}.
\end{equation}
Define also the unweighted volume series of \cA (\cf Remark \ref{rem:Hs})
\begin{equation}
\label{eq:unw vol series}
\sH^*_\cA(t)=\sum_\bd[\sH^*_{\bd}]t^\bd,\qquad
\sH^*_{\bd}(K)
=\#\sets{X\in\cA(K)\qt\sim}{\cls X=\bd}.
\end{equation}
In particular, consider the substack $\cA^\ai\sbs\cA$ of absolutely indecomposable objects of \cA and define the volume series of absolutely indecomposable objects of \cA
\begin{equation}
\label{eq:ai volume1}
\sA_\cA(t)=\sH^*_{\cA^\ai}(t)=\sum_\bd[\sA_\bd]t^\bd,
\end{equation}
where $\sA_\bd(K)$ denotes the number of isomorphism classes of absolutely indecomposable objects $X\in\cA(K)$ with $\cls X=\bd$, for field extensions $K/\bk$.
Note that we consider unweighted volumes here.
We will usually write $\sH(t)=\sH_\cA(t)$, $\sH^*(t)=\sH^*_\cA(t)$, $\sA(t)=\sA_\cA(t)$ if \cA is clear from the context.

%Assume that there exists a closed convex cone $C\sbs\Ga_\bR$ which is pointed ($C\cap(-C)=\set0$) and such that $\cls(X)\in C$ for all $X\in\cA(K)$. This allows us to work with power series counting objects in \cA.

\subsection{Counting indecomposable objects}
For every $\bd\in\Ga$ and $r\ge1$,
let $\sI_{\bd,r}(K)$ be the number of isomorphism classes of indecomposable objects $X\in\cA(K)$ with $\cls(X)=\bd$ and $\dim_K\fs(X)=r$.
Note that $\sA_\bd(K)=\sI_{\bd,1}(K)$ is the number of isomorphism classes of absolutely indecomposable objects.

\begin{proposition}
We have
\begin{equation}
\sA_\bd(\bF_{q^n})
=\sum_{r\mid n}r\sI_{r\bd,r}(\bF_q).
\end{equation}
\end{proposition}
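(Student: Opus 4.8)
The plan is to relate absolutely indecomposable objects over the big field $\bF_{q^n}$ to indecomposable objects over the base field $\bF_q$, using the splitting decomposition of Theorem~\ref{th:splitting}. First I would set $L=\bF_{q^n}$ and enumerate absolutely indecomposable objects $Y\in\cA(L)$ with $\cls(Y)=\bd$ by grouping them according to their minimal field of definition $\fm(Y)=\bF_{q^r}$, which must be an intermediate field $\bk\sbs\bF_{q^r}\sbs L$, so $r\mid n$. For each such $Y$, the object is defined over $\bF_{q^r}$ and then extended to $L$, so the count naturally organizes as a sum over divisors $r\mid n$.

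The key step is to apply Theorem~\ref{th:splitting:2}: an absolutely indecomposable $Y\in\cA(\bF_{q^r})$ with $\fm(Y)=\bF_{q^r}$ assembles, via $\bop_{\si\in\Gal(\bF_{q^r}/\bk)}Y^\si$, into an indecomposable object $X\in\cA(\bk)$ with $\fs(X)=\fs(Y)=\bF_{q^r}$, hence $\dim_\bk\fs(X)=r$. Conversely, by Theorem~\ref{th:splitting:1}, an indecomposable $X\in\cA(\bk)$ with $\dim_\bk\fs(X)=r$ gives $X_{\bF_{q^r}}\iso\bop_{\si\in\Gal(\bF_{q^r}/\bk)}Y^\si$ for an absolutely indecomposable $Y\in\cA(\bF_{q^r})$ with $\fm(Y)=\bF_{q^r}$. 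This sets up a correspondence between the orbit $\{Y^\si\}$ of size $r$ (the Galois conjugates, which are pairwise non-isomorphic since $\fm(Y)=\bF_{q^r}$) and a single indecomposable $X$ over $\bk$. Thus indecomposable objects $X$ over $\bk$ with $\dim_\bk\fs(X)=r$ are in bijection with Galois orbits of absolutely indecomposable $Y$ over $\bF_{q^r}$ having $\fm(Y)=\bF_{q^r}$.

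Next I would track the dimension vectors. Since $\cls$ is additive on direct sums and compatible with restriction, $\cls(X)=\cls(X_{\bF_{q^r}})=\sum_{\si}\cls(Y^\si)=r\cdot\cls(Y)$, because $\cls(Y^\si)=\cls(Y)$ (the class is invariant under the Galois twist, as $Y^\si$ is a restriction-type relabeling fixing the class). So if we want the absolutely indecomposable $Y$ over $L=\bF_{q^n}$ to have $\cls(Y)=\bd$, then after descending to its minimal field $\bF_{q^r}$ the corresponding indecomposable $X$ over $\bk$ has $\cls(X)=r\bd$, giving exactly one object counted by $\sI_{r\bd,r}(\bF_q)$. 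Each such $X$ produces, upon extension to $\bF_{q^r}$ and then to $L$, precisely the $r$ distinct Galois conjugates $Y^\si$, all absolutely indecomposable over $L$ with class $\bd$. Summing the contribution $r\cdot\sI_{r\bd,r}(\bF_q)$ over all admissible $r\mid n$ yields $\sA_\bd(\bF_{q^n})=\sum_{r\mid n}r\,\sI_{r\bd,r}(\bF_q)$.

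The main obstacle I anticipate is verifying that the $r$ conjugates $Y^\si$, viewed as objects over $L=\bF_{q^n}$ rather than over $\bF_{q^r}$, remain pairwise non-isomorphic and remain absolutely indecomposable with $\cls=\bd$; this requires checking that $\fm$ behaves correctly under the further extension $L/\bF_{q^r}$ and that no two distinct conjugates over $\bF_{q^r}$ collapse after base change to $L$. Lemma~\ref{lem:iso descent} (injectivity of base change on isomorphism classes) should rule out such collapse, and the invariance $\fs(Y_L)\iso\fs(Y)\ts_{\bF_{q^r}}L\iso L$ guarantees that absolute indecomposability is preserved. Once these compatibilities are confirmed, the divisor sum follows by partitioning the set of absolutely indecomposable objects over $L$ according to their minimal field of definition.
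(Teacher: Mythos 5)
Your proposal is correct and follows essentially the same route as the paper's proof: stratifying absolutely indecomposable objects over $\bF_{q^n}$ by their minimal field of definition $\bF_{q^r}$, invoking Theorem~\ref{th:splitting} in both directions to match Galois orbits of size $r$ with indecomposables over $\bk$ of class $r\bd$ and $\dim\fs=r$, and using Lemma~\ref{lem:iso descent} to prevent collapse of conjugates under base change. The compatibility checks you flag as potential obstacles are exactly the ones the paper resolves with the same lemmas, so there is no gap.
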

\begin{proof}
Let $L=\bF_{q^n}$ and let $\ubar Y\in\cA(L)$ be an absolutely indecomposable object with $\cls\ubar Y=\bd$.
Let $K=\bF_{q^r}\sbs L$ be its minimal field of definition (hence $r\mid n$)
and let $Y\in\cA(K)$ be such that $\ubar Y\iso Y_L$ (see Remark \ref{rem:min field}).
This object is absolutely indecomposable and unique up to an isomorphism by Lemma \ref{lem:iso descent}.
The objects $Y^\si$, for $\si\in G=\Gal(K/\bk)$,
are not isomorphic to each other as otherwise 
%$Y^\si\iso Y$ for some $\si\ne1$ would imply that 
$Y$ would descend to a smaller field by Lemma \ref{lm:descent}.
Applying Theorem~\ref{th:splitting:2},
we find an indecomposable object $X\in\cA(\bk)$ such that $X_K\iso\bop_{\si\in G}Y^\si$ and $\fs(X)=K$.
We have $\dim\fs(X)=[K:\bk]=r$ and
$$\cls(X)\iso\cls(X_K)=\sum_{\si\in G}\cls Y^\si=r\cls Y=r\bd.$$

Conversely, if $X\in\cA(\bk)$ is indecomposable and $K=\fs(X)$, then by Theorem~\ref{th:splitting:1}
$X\iso\bop_{\si\in G}Y^\si$,
for an absolutely indecomposable $Y\in\cA(K)$ with $\fm(Y)=K$.
Applying the restriction functor $\rho_{L/K}$ to the objects $Y^\si$, we obtain $r=[K:\bk]$ non-isomorphic, absolutely indecomposable objects in $\cA(L)$.
\end{proof}

\begin{remark}
The above proof implies that if $\sI_{\bd,r}(\bF_q)\ne0$, then $r\mid \bd$.
Using the M\"obius inversion formula, we obtain
\begin{equation}
\sI_{n\bd,n}(\bF_q)=\frac1n\sum_{r\mid n}\mu\rbr{\frac nr}\sA_\bd(\bF_{q^r}).
\end{equation}
\end{remark}

\begin{remark}
\label{rm:gauss}
The above formula,
in the case of quiver representations,
was proved by Kac \cite[\S1.14]{kac_root}.
But its first appearance can be attributed to Gauss who proved that the number $\vi_r(q)$ of degree $r$ monic irreducible polynomials over $\bF_q$ satisfies $q^n=\sum_{r\mid n}r\vi_r(q)$.
This equation follows from our result applied to the category of semisimple modules over $\bF_q[x]$.
The above proof follows the argument of \cite{mozgovoy_poincare}, where the case of stable vector bundles over a curve was considered.
\end{remark}

\begin{corollary}
\label{cor:abs vs ind}
We have 
\begin{equation}
\psi_n[\sA_\bd]=\sum_{r\mid n}r[\sI_{r\bd,r}],
\end{equation}
\begin{equation}
[\sI_{n\bd,n}]=\frac1n\sum_{r\mid n}\mu\rbr{\frac nr}\psi_r[\sA_\bd].
\end{equation}
\end{corollary}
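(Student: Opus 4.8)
The plan is to deduce both identities from the preceding Proposition (and Remark) by unwinding the definition of the Adams operations $\psi_n(a)=(a_{nm})_{m\ge1}$ on the volume ring $\cV=\prod_{m\ge1}\bQ$, comparing the two sides componentwise.

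First I would prove $\psi_n[\sA_\bd]=\sum_{r\mid n}r[\sI_{r\bd,r}]$ by identifying the $m$-th components of both sides. By the definition of $\psi_n$ and of the volume of a counting family, the $m$-th component of the left-hand side is $\sA_\bd(\bF_{q^{nm}})$, while that of the right-hand side is $\sum_{r\mid n}r\,\sI_{r\bd,r}(\bF_{q^m})$. The key point is that the Proposition is a statement about an arbitrary linear stack over a finite field, so I may apply it with $\bk=\bF_q$ replaced by $\bF_{q^m}$: the stack $\cA$ restricts to a linear stack over $\bF_{q^m}$ along $\Spec(\bF_{q^m})_\et\emb\Spec(\bk)_\et$, and $\Ga$ together with the homomorphisms $\cls$ restricts as well. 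Applying the Proposition over $\bF_{q^m}$ with exponent $n$ gives
\[
\sA_\bd\rbr{\bF_{(q^m)^n}}=\sum_{r\mid n}r\,\sI_{r\bd,r}(\bF_{q^m}),
\]
and since $(q^m)^n=q^{nm}$ this is precisely the desired equality of $m$-th components. Here I would stress that $\sI_{\bd,r}(K)$ is defined intrinsically relative to the field $K$ of evaluation — it counts indecomposables $X\in\cA(K)$ with $\dim_K\fs(X)=r$ — so the symbol $\sI_{r\bd,r}(\bF_{q^m})$ has the same meaning whether $\bF_{q^m}$ is viewed as the base field or as an extension of $\bF_q$, and no relabelling is required.

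Next I would obtain the second identity by M\"obius inversion of the first inside the additive group of $\cV$. Writing $f(n)=\psi_n[\sA_\bd]$ and $g(r)=r[\sI_{r\bd,r}]$, the first equation reads $f(n)=\sum_{r\mid n}g(r)$, so M\"obius inversion over the divisibility poset yields $g(n)=\sum_{r\mid n}\mu\rbr{\frac nr}f(r)$, that is,
\[
n\,[\sI_{n\bd,n}]=\sum_{r\mid n}\mu\rbr{\frac nr}\psi_r[\sA_\bd].
\]
Because $\cV$ is a $\bQ$-algebra, hence torsion-free, the inversion is valid additively and I may divide by $n$ to reach the stated formula. One can also verify this second identity directly, component by component, from the preceding Remark applied over each $\bF_{q^m}$; this serves as a consistency check but is unnecessary once the first equation is established.

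The computation is essentially bookkeeping, so I do not expect a serious obstacle. The one point that genuinely needs care is the interplay between the index shift built into $\psi_n$ and the base change of the Proposition: I must confirm that applying $\psi_n$ on the volume side corresponds exactly to raising the field-extension exponent by a factor of $n$ on the counting side. Once this correspondence is pinned down — together with the observation that $\sI_{\bd,r}$ is relative to its field of evaluation — both formulas follow at once.
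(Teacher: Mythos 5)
Your proposal is correct and matches the paper's (implicit) derivation: the corollary is stated without proof precisely because the Proposition holds over an arbitrary finite base field, so applying it over each $\bF_{q^m}$ gives the first identity componentwise in $\cV$ under the Adams operations, and the second follows by M\"obius inversion (as in the preceding Remark), with division by $n$ legitimate since $\cV$ is a $\bQ$-algebra. Your explicit attention to the base-change point and to the intrinsic definition of $\sI_{\bd,r}(K)$ is exactly the bookkeeping the paper leaves to the reader.
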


\subsection{Counting objects with endomorphisms}
\label{sec:counting endo}
Let $\Z\sbs\bA^1_\bk$ be an algebraic subvariety.
For example, consider 
\begin{equation}
\label{eq:Zs2}
\Z_0=\set0,\qquad \Z_*=\bA_\bk^1\ms\set0,\qquad \Z_a=\bA_\bk^1.
\end{equation}
Given an algebra $A$ over a field $K/\bk$, define $A^\Z$ to be the set of algebraic elements $x\in A$ with the minimal polynomial having roots in $\Z(\ubar\bk)$ (we will say that such element $x$ and such polynomial have type $\Z$).

Define a new stack $\cA^\Z$ such that, for every field extension $K/\bk$, the category $\cA^\Z(K)$ has objects $X=(X_0,\vi)$, where $X_0\in\cA(K)$ and $\vi\in\End(X_0)^\Z$.
Morphisms $f:X\to Y$ between objects $X=(X_0,\vi)$ and $Y=(Y_0,\vi')$ are defined to be $f\in\Hom(X_0,Y_0)$ such that $\vi' f=f\vi$. 
Define $\cls X=\cls X_0\in\Ga$.
Consider the series of volumes of the new stack $\cA^\Z$ (\cf \eqref{eq:vol series1})
\begin{equation}
\label{eq:HZ2}
\sH^\Z_\cA(t)=\sH_{\cA^\Z}(t)
=\sum_{\bd\in\Ga}[\sH_\bd^\Z]t^\bd,\qquad
\sH_\bd^\Z(K)=\sum_{\ov{X\in\cA^\Z(K)\qt\sim}{\cls X=\bd}}\frac1{\#\Aut(X)}.
\end{equation}
We will usually write $\sH^\Z(t)=\sH^\Z_\cA(t)$ if \cA is clear from the context.
Note that
\begin{equation}
\sH^\Z_\bd(K)=\sum_{\ov{X_0\in\cA(K)\qt\sim}{\cls X_0=\bd}}
\frac{\#\End(X_0)^\Z}{\#\Aut(X_0)},
\end{equation}
hence the above definition of $\sH^\Z_\cA(t)$ coincides with \eqref{eq:HZ1}.

\begin{remark}
\label{rem:Hs}
Define $\sH^\z_\cA(t)=\sH^{\Z_\z}_\cA(t)$, for $\z\in\set{0,*,a}$.
%where $\Z_0=\set0$, $\Z_*=\bA^1\ms\set0$ and $\Z_a=\bA^1$.
Note that $\sH^*_\cA(t)$ thus defined coincides with the unweighted volume series of \cA \eqref{eq:unw vol series}.
\end{remark}

\begin{theorem}[\cf Theorem \ref{th:main2}]
\label{th:main proof}
We have
$$\sH^\Z_\cA(t)
=\Exp\rbr{\frac{[\Z]}{q-1}\cdot \sA_\cA(t)}.$$
\end{theorem}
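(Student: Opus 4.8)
The plan is to reduce the weighted count defining $\sH^\Z_\cA(t)$ to a product over indecomposable summands and then to recognise that product, through the power structure of Theorem~\ref{power}, as the asserted plethystic exponential.

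\emph{Step 1: collapsing the weight onto the semisimple quotient.} Fix a field extension $K/\bk$ and $X_0\in\cA(K)$, and set $A=\End(X_0)$. I would first show that $A^\Z$ is exactly the preimage of $(A\qt\rad A)^\Z$ under $A\onto A\qt\rad A$: if $p$ is the minimal polynomial of $\vi\in A$ and $\bar p$ that of its image, then $\bar p\mid p$, while $\bar p(\vi)\in\rad A$ is nilpotent (the radical of a finite-dimensional algebra is nilpotent, \S\ref{sec:rad}), so $p\mid\bar p^N$; hence $p$ and $\bar p$ have the same roots and the type is unchanged. Since an element of $A$ is invertible iff its image in $A\qt\rad A$ is (\S\ref{sec:rad}), the same description holds for $A^*=\Aut(X_0)$. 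Both sets are therefore unions of $\#\rad A$ cosets, so the radical cancels:
$$\frac{\#\End(X_0)^\Z}{\#\Aut(X_0)}=\frac{\#(A\qt\rad A)^\Z}{\#(A\qt\rad A)^*}.$$
Using the Krull--Schmidt decomposition $X_0\iso\bop_j Y_j^{m_j}$ into pairwise non-isomorphic indecomposables, $\rad_\cA$-additivity (\S\ref{sec:rad}) and Wedderburn--Artin give $A\qt\rad A\iso\prod_j M_{m_j}(\fs(Y_j))$ with each $\fs(Y_j)$ a finite field. As both $(-)^\Z$ and $(-)^*$ are multiplicative over finite products of algebras --- the roots of the minimal polynomial of a tuple being the union of those of its components --- this yields
$$\frac{\#\End(X_0)^\Z}{\#\Aut(X_0)}=\prod_j\frac{\#M_{m_j}(\fs(Y_j))^\Z}{\#\GL_{m_j}(\fs(Y_j))}.$$

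\emph{Step 2: the Euler product.} Summing over isomorphism classes of $X_0$ with $\cls X_0=\bd$ amounts to summing over multiplicity functions on indecomposables, so for each $n$ the series $\sum_\bd\sH^\Z_\bd(\bF_{q^n})t^\bd$ factors as an Euler product over isomorphism classes of indecomposables $Y\in\cA(\bF_{q^n})$, the factor of $Y$ being $\sum_{m\ge0}\frac{\#M_m(\fs Y)^\Z}{\#\GL_m(\fs Y)}t^{m\cls Y}$. Grouping $Y$ by the pair $(\cls Y,\dim_{\bF_{q^n}}\fs Y)$ and invoking Theorem~\ref{th:splitting} (an indecomposable with $\dim\fs=r$ has class $r\bd$ and corresponds to a Galois orbit of absolutely indecomposables of class $\bd$ over $\bF_{q^{nr}}$), their number is $\sI_{r\bd,r}(\bF_{q^n})$. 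Because $\cV\pser t\iso\prod_n\bQ\pser t$, ordinary exponentiation $a^b$ is computed componentwise, so these level-wise products assemble into one identity in $\cV\pser t$,
\begin{equation}\label{eq:euler}
\sH^\Z_\cA(t)=\prod_{r\ge1}\prod_{\bd}\bigl(\psi_r L^\Z(t^\bd)\bigr)^{[\sI_{r\bd,r}]},\qquad L^\Z(u)=\sum_{m\ge0}\frac{\#M_m(\bF_q)^\Z}{\#\GL_m(\bF_q)}u^m,
\end{equation}
where the local factor over $\bF_{q^{nr}}$ is exactly the level-$n$ value of $\psi_r L^\Z$. Establishing \eqref{eq:euler} rigorously --- correctly matching the grouping of indecomposables over $\bF_q$ with the Frobenius twist $\psi_r$ and the exponents $[\sI_{r\bd,r}]$, and checking the products converge $t$-adically via the pointed cone --- is the main obstacle. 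Separately, one needs the base case $L^\Z(u)=\Exp(\tfrac{[\Z]}{q-1}u)$: both sides are multiplicative in $\Z$ (the generalised-eigenspace decomposition splits an endomorphism with eigenvalues in $\Z_1\sqcup\Z_2$ orthogonally, and $[\,\cdot\,]$, $\Exp$ turn $\sqcup$, $+$ into products), so it reduces to a single closed point $V(p)$, where $p$-primary matrices are nilpotent $\bF_{q^{\deg p}}$-endomorphisms and the Fine--Herstein count recalled in the introduction applies.

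\emph{Step 3: assembly via the power structure.} Corollary~\ref{cor:abs vs ind} gives $\psi_n[\sA_\bd]=\sum_{r\mid n}r[\sI_{r\bd,r}]$, which is precisely the hypothesis of Theorem~\ref{power} with $b=[\sA_\bd]$ and $b_r=[\sI_{r\bd,r}]$. Hence for any $a\in1+F^1$,
$$\prod_{r\ge1}(\psi_r a)^{[\sI_{r\bd,r}]}=\Pow(a,[\sA_\bd])=\Exp\bigl([\sA_\bd]\Log a\bigr).$$
Taking $a=L^\Z(t^\bd)$ and using the base case, $\Log a=\tfrac{[\Z]}{q-1}t^\bd$, so the inner product over $r$ in \eqref{eq:euler} equals $\Exp\bigl(\tfrac{[\Z]}{q-1}[\sA_\bd]t^\bd\bigr)$. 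Multiplying over $\bd$ and applying $\Exp(f+g)=\Exp(f)\Exp(g)$,
$$\sH^\Z_\cA(t)=\prod_{\bd}\Exp\Bigl(\tfrac{[\Z]}{q-1}[\sA_\bd]t^\bd\Bigr)=\Exp\Bigl(\tfrac{[\Z]}{q-1}\sum_{\bd}[\sA_\bd]t^\bd\Bigr)=\Exp\Bigl(\tfrac{[\Z]}{q-1}\sA_\cA(t)\Bigr),$$
which is the assertion.
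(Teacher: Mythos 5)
Your proposal is correct, and its skeleton coincides with the paper's own proof: the same radical cancellation reducing the local weight to $\prod_j\#M_{m_j}(\fs(Y_j))^\Z/\#\GL_{m_j}(\fs(Y_j))$, the same Euler product over indecomposables grouped by $(\cls Y,\dim\fs Y)$ with exponents $[\sI_{r\bd,r}]$ and Frobenius twists $\psi_r$, and the same conversion to $\prod_\bd\Pow\rbr{\sH^\Z_{\Vect}(t^\bd),[\sA_\bd]}$ via Theorem \ref{power} and Corollary \ref{cor:abs vs ind}; the assembly over all ground fields that you flag as ``the main obstacle'' is dispatched in the paper by the same one-line observation that the level-one computation holds verbatim over every $\bF_{q^n}$, so that is not a real gap. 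The genuine divergence is the vector-space input $\sH^\Z_{\Vect}(t)=\Exp\rbr{\frac{[\Z]}{q-1}t}$. The paper defers it to Theorem \ref{th:vect spaces}, which runs a second Euler product over monic irreducible polynomials of type $\Z$, applies Theorem \ref{power} with the Gauss-type identity $\psi_n[\Z]=\sum_{r\mid n}r[\Phi^\Z_r]$ to get $\sH^\Z_{\Vect}(t)=\Pow(\sH^0_{\Vect}(t),[\Z])$, and then pins down $\sH^0_{\Vect}(t)=\Exp\rbr{\frac t{q-1}}$ from the normalization $\Pow(\sH^0_{\Vect}(t),q-1)=\sH^*_{\Vect}(t)=\Exp(t)$ --- so the Fine--Herstein count comes out as a corollary rather than going in as an input. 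You instead split $\Z$ into closed points by primary decomposition and feed Fine--Herstein in as an external ingredient. That is a legitimate and shorter route, but two details deserve care in it: for cofinite $\Z$ (such as $\Z_*$ or $\Z_a$) the reduction to closed points is an infinite product, convergent only coefficient- and level-wise in $\cV\pser t$; and the identification of $p$-primary matrices with nilpotent $\bF_{q^{\deg p}}$-endomorphisms is literally true only over the base field --- over $\bF_{q^n}$ the point $V(p)$ splits into $\gcd(\deg p,n)$ closed points, and matching this splitting against the level-$n$ component of $\Exp\rbr{\frac{[V(p)]}{q-1}u}$ is precisely the Frobenius bookkeeping that the paper's normalization trick sidesteps. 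Both routes are sound; the paper's is self-contained, yours trades that for brevity.
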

\begin{proof}
For every field extension $K/\bk$ and every $\bd\in\Ga$, let $\Ind_\bd(K)$ be the set of isomorphism classes of indecomposable objects $X\in\cA(K)$ with $\cls(X)=\bd$.
Let $r_X=\dim_{K}\fs(X)$.
Every object $Y\in\cA(\bk)$ can be written in the form
$Y\iso \bop_{X\in\Ind(\bk)}X^{n_X}$, where $n:\Ind(\bk)\to\bN$ is a map with finite support.
We have
$$\fs(Y)=\End(Y)\qt\rad(Y,Y)
\iso\prod_X M_{n_X}(\fs(X))
%\iso\prod_X M_{n_X}(\bF_{q^{r_X}})
.$$
An object $\vi\in\End(Y)$ is of type $\Z$ if and only if its image in $\fs(Y)$ is of type $\Z$ (as the radical is nilpotent) if and only if the corresponding components in $M_{n_X}(\fs(X))$ are of type~$\Z$.
The same argument can be applied also to invertible endomorphisms, corresponding to $\Z_*=\bA^1\ms\set0$.
We obtain
$$
\frac{\#\End(Y)^\Z}{\#\Aut(Y)}
=\frac{\#\fs(Y)^\Z}{\#\fs(Y)^{\Z_*}}
=\prod_X\frac{\n{M_{n_X}^\Z(\fs(X))}}{\n{\GL_{n_X}(\fs(X))}}.
%=\prod_X c_{n_X}(\fs(X)),
$$
Let $\sH^\Z_{\Vect}(t)$ be the series of volumes of the stack \Vect of vector spaces (equipped with endomorphisms of type $Z$).
This means that
$$\sH_{\Vect}^\Z(t)=\sum_{n\ge0}[c_n]t^n\in\cV\pser t,\qquad
c_n(K)=\frac{\n{M_n^\Z(K)}}{\n{\GL_n(K)}}.
%[c_n]=(c_n(\bF_{q^k}))_{k\ge1}\in\cV.
$$
Then
\begin{multline*}
\sum_{\bd\in\Ga}\sH^\Z_\bd(\bF_q)t^\bd=
\sum_{Y\in\cA(\bk)\qt\sim}\frac{\#\End(Y)^\Z}{\#\Aut(Y)}
t^{\cls Y}\\
=\sum_{n:\Ind(\bk)\to\bN}\prod_{X\in\Ind(\bk)}
c_{n_X}(\fs(X))t^{n_X\cls X}
=\prod_{X\in\Ind(\bk)}\rbra{\sum_{n\ge0}c_{n}(\fs(X))t^{n\cls X}}\\
=\prod_{\bd,r}\rbra{\sum_{n\ge0}
c_{n}(\bF_{q^r})t^{n\bd}}^{\sI_{\bd,r}(\bF_q)}
=\prod_{\bd,r}\rbra{\sum_{n\ge0}
c_{n}(\bF_{q^r})t^{nr\bd}}^{\sI_{r\bd,r}(\bF_q)}.
\end{multline*}
The same formula can be proved for any field extension $K/\bk$, hence we obtain
$$\sH^\Z(t)
=\prod_{\bd,r}\rbra{\sum_{n\ge0}\psi_r([c_n])t^{nr\bd}}^{[\sI_{r\bd,r}]}
=\prod_{\bd,r}\rbra{\psi_r\rbr{\sH_{\Vect}^\Z(t^\bd)}}^{[\sI_{r\bd,r}]}.
$$
Applying Theorem \ref{power} and Corollary \ref{cor:abs vs ind}, we obtain
$$\sH^\Z(t)=\prod_{\bd\in\Ga}\Pow(\sH_{\Vect}^\Z(t^{\bd}),[\sA_\bd]).$$
We will prove in Theorem \ref{th:vect spaces} that
$\sH_{\Vect}^\Z(t)=\Exp\rbr{\frac{[\Z]}{q-1}t}$.
%which is a particular case of \eqref{}.
Then
$$\sH^\Z(t)=\prod_{\bd\in\Ga}
\Exp\rbr{\frac{[\Z]}{q-1}[\sA_\bd]t^\bd}
=\Exp\rbr{\frac{[\Z]}{q-1}\sA(t)}.
$$
\end{proof}

\begin{corollary}
We have
\begin{enumerate}
\item $\sH_\cA^0(t)=\Exp\rbr{\frac{\sA_\cA(t)}{q-1}}$.
\item $\sH_\cA^*(t)=\Exp(\sA_\cA(t))$.
\item $\sH_\cA^a(t)=\Exp\rbr{\frac{q \sA_\cA(t)}{q-1}}$.
\item $\sH_\cA^Z(t)=\Pow(\cH_\cA^0(t),[Z])$, for every subvariety $Z\sbs\bA^1$.
\end{enumerate}
\end{corollary}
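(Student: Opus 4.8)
The plan is to read off all four identities from the master formula of Theorem~\ref{th:main proof}, $\sH^\Z_\cA(t)=\Exp\rbr{\frac{[\Z]}{q-1}\sA_\cA(t)}$, which is valid for an arbitrary subvariety $\Z\sbs\bA^1$. The first step is to compute the three volumes attached to the varieties in \eqref{eq:Zs2}. As $\Z_0=\set0$ is a single point, $\Z_*=\bA^1\ms\set0=\Gm$, and $\Z_a=\bA^1$, counting $\bF_{q^n}$-points yields $[\Z_0]=1$, $[\Z_*]=q-1$, and $[\Z_a]=q$ in the volume ring $\cV$.

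Substituting these into the master formula gives (1)--(3) at once: for $\Z=\Z_0$ the coefficient is $[\Z_0]/(q-1)=1/(q-1)$; for $\Z=\Z_*$ it is $[\Z_*]/(q-1)=1$, so $\sH^*_\cA(t)=\Exp(\sA_\cA(t))$; and for $\Z=\Z_a$ it is $[\Z_a]/(q-1)=q/(q-1)$. Before applying $\Exp$ and, later, $\Log$, I would check that the series sit in the correct domains: since the only object of class $0$ is the zero object, which is not indecomposable, the series $\sA_\cA(t)$ has vanishing constant term and thus lies in $F^1$, while $\sH^0_\cA(t)$ has constant term $1$ and lies in $1+F^1$.

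For (4), the plan is to combine the definition $\Pow(a,b)=\Exp(b\Log(a))$ with the fact that $\Log$ is the inverse of $\Exp$ on $1+F^1R$. Applying $\Log$ to identity (1) gives $\Log\rbr{\sH^0_\cA(t)}=\frac{\sA_\cA(t)}{q-1}$, whence for any subvariety $Z\sbs\bA^1$
$$\Pow\rbr{\sH^0_\cA(t),[Z]}=\Exp\rbr{[Z]\cdot\frac{\sA_\cA(t)}{q-1}}=\sH^Z_\cA(t),$$
the last equality being the master formula once more. I do not anticipate any real obstacle: the corollary is a direct specialization of Theorem~\ref{th:main proof}, and the only step needing a word of justification is the verification of the domains of $\Exp$, $\Log$, and $\Pow$ described above, which rests on the pointedness of the cone $C$.
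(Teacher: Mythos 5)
Your proposal is correct and matches the paper's (implicit) argument: the corollary is stated without proof precisely because it is the direct specialization of Theorem~\ref{th:main proof} to $[\Z_0]=1$, $[\Z_*]=q-1$, $[\Z_a]=q$, with (4) following by applying $\Log$ to (1) and using the definition of $\Pow$. Your additional verification that $\sA_\cA(t)\in F^1$ and $\sH^0_\cA(t)\in 1+F^1$ is a sound (if routine) check of the domains of the plethystic operations.
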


\begin{corollary}
\label{cor:compare ai}
Given a linear stack $\cA$, the volumes of absolutely indecomposable objects in $\cA^Z$ and in $\cA^0$ are related by the formula
\begin{equation}
\sA_{\cA^Z}(t)=[Z]\sA_{\cA^0}(t).
\end{equation}
\end{corollary}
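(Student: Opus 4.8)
The plan is to prove the identity componentwise in $\cV$ by exhibiting, for every finite field extension $K/\bk$ and every $\bd\in\Ga$, a bijection between the absolutely indecomposable objects $(X_0,\vi)\in\cA^Z(K)$ with $\cls X_0=\bd$ and the pairs $(\la,(X_0,\psi))$ consisting of a point $\la\in Z(K)$ and an absolutely indecomposable object $(X_0,\psi)\in\cA^0(K)$ with $\cls X_0=\bd$. The bijection sends $(X_0,\vi)$ to $(\la,(X_0,\vi-\la\cdot 1_{X_0}))$, where $\la$ is the eigenvalue of $\vi$, with inverse $(\la,(X_0,\psi))\mto(X_0,\psi+\la\cdot 1_{X_0})$. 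Since subtracting the scalar $\la$ leaves the underlying object unchanged, this counts the degree-$\bd$ absolutely indecomposables of $\cA^Z$ over $\bF_{q^n}$ as $\#Z(\bF_{q^n})$ copies of those of $\cA^0$, giving $[\sA_{\cA^Z,\bd}]=[Z]\,[\sA_{\cA^0,\bd}]$ in $\cV$; summing over $\bd\in\Ga$ yields $\sA_{\cA^Z}(t)=[Z]\,\sA_{\cA^0}(t)$.

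The crux is a structural statement which I would isolate as a lemma: if $(X_0,\vi)\in\cA^Z(K)$ is absolutely indecomposable, then $\vi$ has a single eigenvalue $\la$ over $\ubar\bk$, and moreover $\la\in K$. For the first half, pass to a finite extension $L/K$ splitting the minimal polynomial of $\vi$; if $\vi_L$ had two distinct eigenvalues, the associated generalized-eigenspace projector, obtained by partial fractions, is a polynomial in $\vi_L$, hence an idempotent of $\End_{\cA^Z(L)}(X_0\ts_K L,\vi_L)$ commuting with $\vi_L$. Splitting it in the Karoubian category $\cA(L)$ produces a nontrivial decomposition of $(X_0,\vi)$ base-changed to $L$, contradicting absolute indecomposability. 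For the rationality, the minimal polynomial of $\vi$ lies in $K[t]$ and, by the first half, has $\la$ as its only root over $\ubar\bk$; since $\Gal(\ubar\bk/K)$ permutes the roots of a polynomial in $K[t]$, the singleton $\set\la$ is Galois-stable, so $\la\in K$. As $\vi$ has type $Z$ we then get $\la\in Z(K)$, and $\vi-\la\cdot 1_{X_0}$ has unique eigenvalue $0$, i.e.\ is of type $Z_0$.

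With the lemma in hand the remaining verifications are formal: $f$ commutes with $\vi$ if and only if it commutes with $\vi-\la\cdot 1_{X_0}$, so the two objects have identical endomorphism rings, whence $\Aut$, indecomposability, and—because the scalar shift commutes with every base change $L/K$—absolute indecomposability are all preserved, and $\cls X_0=\bd$ is unchanged; conversely, for $\la\in Z(K)$ and absolutely indecomposable $(X_0,\psi)\in\cA^0(K)$, the endomorphism $\psi+\la\cdot 1_{X_0}$ has unique eigenvalue $\la\in Z(\ubar\bk)$, hence is of type $Z$, supplying the inverse map. \textbf{The main obstacle} is the structural lemma, and within it the $K$-rationality of $\la$: this is exactly what forces the shift to be defined over the base field and thereby attaches precisely one copy of $\cA^0$ to each point of $Z(K)$. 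As a consistency check, applying Theorem~\ref{th:main proof} to the stacks $\cA^Z$ and $\cA^0$ with nilpotent endomorphisms (so that $[Z_0]=1$) gives $\sH^0_{\cA^Z}(t)=\Exp\rbr{\frac1{q-1}\sA_{\cA^Z}(t)}$ and $\sH^0_{\cA^0}(t)=\Exp\rbr{\frac1{q-1}\sA_{\cA^0}(t)}$, so the claimed identity is equivalent to $\sH^0_{\cA^Z}(t)=\Pow(\sH^0_{\cA^0}(t),[Z])$; the direct bijection has the advantage of avoiding this last relation.
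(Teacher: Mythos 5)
Your proof is correct, and it takes a genuinely different route from the paper's. The paper argues formally at the level of series: setting $\cB=\cA^Z$ and $\cC=\cA^0$, it notes that the stacks $\cB^0$ and $\cC^Z$ are equivalent (both parametrize triples $(X_0,\vi,\psi)$ with $\vi$ of type $Z$, $\psi$ nilpotent and $\vi\psi=\psi\vi$), applies Theorem~\ref{th:main proof} to both sides to obtain $\Exp\rbr{\frac1{q-1}\sA_{\cB}(t)}=\sH_{\cB^0}(t)=\sH_{\cC^Z}(t)=\Exp\rbr{\frac{[Z]}{q-1}\sA_{\cC}(t)}$, and cancels $\Exp$; your closing ``consistency check'' is essentially this argument read backwards. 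You instead build an object-level bijection, resting on the structural lemma that an absolutely indecomposable $(X_0,\vi)\in\cA^Z(K)$ has a single eigenvalue $\la$, which moreover lies in $Z(K)$. Your proof of that lemma is sound: two distinct roots of the minimal polynomial would produce, over a splitting field $L$ (the minimal polynomial being unchanged under base change, by Lemma~\ref{lm:hom tensor}), a nontrivial idempotent that is a polynomial in $\vi_L$ and hence splits $((X_0)_L,\vi_L)$ inside $\cA^Z(L)$, contradicting absolute indecomposability; Galois stability of the singleton root set then gives $\la\in K$ because finite fields are perfect. What your route buys: it is independent of Theorem~\ref{th:main proof} and of the plethystic formalism, it proves the refined statement that for every finite extension $K/\bk$ and every $\bd\in\Ga$ the number of isomorphism classes of absolutely indecomposable objects of class $\bd$ in $\cA^Z(K)$ is exactly $\#Z(K)$ times that in $\cA^0(K)$, and it explains concretely where the factor $[Z]$ comes from: type-$Z$ absolutely indecomposables are rational scalar shifts of nilpotent ones. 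What the paper's route buys: with the machinery already in place it is a two-line argument that requires no eigenvalue or rationality analysis and never touches individual objects.
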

\begin{proof}
Let $\cB=\cA^Z$ and $\cC=\cA^0$.
Then the stacks $\cB^0$ and $\cC^Z$ are equivalent.
Therefore
$$\sH_{\cB^0}(t)=\Exp\rbr{\frac1{q-1}\sA_{\cB}(t)}
=\sH_{\cC^Z}(t)
=\Exp\rbr{\frac{[Z]}{q-1}\sA_{\cC}(t)}.$$
This implies $\sA_\cB(t)=[Z]\sA_\cC(t)$.
\end{proof}

\begin{remark}
Given a quiver $Q$, let $\cA$ be the linear stack of its representations with $\cA(K)=\Rep(Q,K)$ -- the category of $Q$ representations over a field extension $K/\bk$.
The series
$$\sH^0(t)=\sum_{\bd\in\bN^{Q_0}}[\sH_\bd^0]t^\bd,\qquad
\sH_\bd^0(K)=\sum_{X\in\cA(K)}\frac{\#\End(X)^0}{\#\Aut(X)}t^{\udim X}$$
can be explicitly computed by fixing nilpotent endomorphisms (parametrized by partitions) at every vertex:
%\begin{equation}
%\sH^0(t)=\sum_{\la:Q_0\to\cP}
%\prod_{k\ge1}\frac{q^{-\hi(\la_k,\la_k)}}
%{(q\inv)_{\la_k-\la_{k+1}}}t^{\la_k},
%\end{equation}
%where $\la_k=(\la(i)_k)_{i\in Q_0}\in\bN^{Q_0}$,
%where $(q)_\bd=\prod_{i\in Q_0}(q)_{\bd_i}$, for $\bd\in\bN^{Q_0}$.
%Alternative formulation (using conjugate partitions):
\begin{equation}
\sH^0(t)=\sum_{\la:Q_0\to\cP}\frac{\prod_{a:i\to j}q^{\ang{\la(i),\la(j)}}}
{\prod_{i}q^{\ang{\la(i),\la(i)}}(q\inv)_{\la(i)}}
\prod_{i\in Q_0}t_i^{\n{\la(i)}}
,
\end{equation}
where $\cP$ is the set of partitions
and
$(q)_\la=\prod_{i\ge1}(q)_{m_i}$,
$\ang{\la,\mu}=\sum_{i,j}m_in_j\min\set{i,j}=\sum_{i}\la'_i\mu'_i$,
for partitions $\la=(1^{m_1},2^{m_2},\dots)$ and $\mu=(1^{n_1}2^{n_2}\dots)$.
Then Theorem \ref{th:main proof} implies the Hua formula
\cite{hua_counting} (see \cite{mozgovoy_computational} for the plethystic formulation)
\begin{equation}
\sA(t)=(q-1)\Log(\sH^0(t)).
\end{equation}
%and the Kac-Stanley formula \cite{kac_root} (see \cite{mozgovoy_computational} for the plethystic formulation)
%$$S=\Pow(R,q-1)=\Exp((q-1)\Log(R))$$
%Historically the Hua formula was obtained as a combination of the Kac-Stanley formula and $S=\Exp(A)$.
\end{remark}

\subsection{The case of vector spaces}
Let us consider the linear stack $\cA=\Vect$ of vector spaces.
This means that $\cA(K)$ is the category of (finite-dimensional) vector space over $K$, for every field extension $K/\bk$.
Given $Z\sbs\bA^1_\bk$, consider the corresponding series of volumes
$$\sH^\Z(t)=\sH^\Z_{\Vect}(t)=\sum_{n\ge0}[\sH^\Z_n]t^n,\qquad
\sH^\Z_n(K)=\frac{\#M_n^\Z(K)}{\#\GL_n(K)}.$$

\begin{theorem}
\label{th:vect spaces}
We have
\begin{equation}
\sH^\Z_{\Vect}(t)=\Exp\rbr{\frac{[\Z]}{q-1}t}.
\end{equation}
\end{theorem}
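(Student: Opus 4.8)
The plan is to reinterpret $\sH^\Z_{\Vect}(t)$ as the volume series of the stack of finite-dimensional $K[x]$-modules supported on $\Z$, and to factor it over the closed points of $\Z$. By orbit counting, the ratio $\sH^\Z_n(K)=\#M_n^\Z(K)/\#\GL_n(K)$ is the groupoid cardinality
\[
\sH^\Z_n(K)=\sum_{(V,\vi)}\frac1{\#\Aut(V,\vi)},
\]
where $(V,\vi)$ runs over isomorphism classes of $n$-dimensional $K$-vector spaces with an endomorphism $\vi$ of type $\Z$; such a pair is exactly a $K[x]$-module on which $x$ acts with minimal polynomial of type $\Z$. First I would use the primary decomposition: every such module splits uniquely into its $p$-primary parts, where $p$ runs over the monic irreducible polynomials over $K$ of type $\Z$ (equivalently, the closed points of $\Z$). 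Since $K$-dimension and automorphism groups are multiplicative across this decomposition, I obtain
\[
\sum_{n\ge0}\sH^\Z_n(K)t^n=\prod_{p}G_p(t),\qquad
G_p(t)=\sum_{M\ \text{$p$-primary}}\frac{t^{\dim_KM}}{\#\Aut M},
\]
the product being over the irreducible $p$ of type $\Z$; it converges in $\bQ\pser t$ because $G_p(t)\in1+t^{\deg p}\bQ\pser t$.

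Next I would compute each local factor. The $p$-primary modules are precisely the finite-length modules over the discrete valuation ring $\cO_p=K[x]_{(p)}$, whose residue field $\kappa(p)=K[x]/(p)$ has $Q_p=(\#K)^{\deg p}$ elements; a module $M_\la$ of type $\la$ has $\cO_p$-length $|\la|$ and $K$-dimension $(\deg p)\,|\la|$. The order $\#\Aut M_\la$ depends only on $Q_p$ and $\la$, not on the particular DVR, so $G_p(t)$ is obtained from the $\Z=\set0$, residue-field case by the substitutions $t\mapsto t^{\deg p}$ and $q\mapsto Q_p$. That case is the computation recalled in the introduction: by Fine--Herstein $\#M_n^0(\kappa(p))=Q_p^{\,n^2-n}$, and the $q$-binomial theorem yields
\[
G_p(t)=\sum_{\la}\frac{t^{(\deg p)|\la|}}{\#\Aut M_\la}
=\prod_{i\ge1}\frac1{1-Q_p^{-i}t^{\deg p}}.
\]

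Finally I would assemble the product and match it with $\Exp\rbr{\frac{[\Z]}{q-1}t}$ by comparing entries in $\cV=\prod_{m\ge1}\bQ$. Working over $K=\bF_{q^m}$ and taking ordinary logarithms, the coefficient of $t^N$ in $\log\prod_pG_p(t)$ is $\frac1N\rbr{\sum_{d\mid N}d\,P_d(m)}\sum_{i\ge1}q^{-miN}$, where $P_d(m)$ counts the degree-$d$ irreducible polynomials of type $\Z$ over $\bF_{q^m}$. On the other side, using $\psi_k\rbr{\frac{[\Z]}{q-1}t}=\frac{\psi_k[\Z]}{q^k-1}t^k$ together with $\frac1{q^k-1}=\sum_{i\ge1}q^{-ik}$, the $m$-th entry of $\log\Exp\rbr{\frac{[\Z]}{q-1}t}=\sum_{k\ge1}\frac1k\psi_k\rbr{\frac{[\Z]}{q-1}t}$ has $t^N$-coefficient $\frac1N\,\#\Z(\bF_{q^{Nm}})\sum_{i\ge1}q^{-miN}$, since the $m$-th entry of $\psi_N[\Z]$ is $\#\Z(\bF_{q^{Nm}})$. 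The two agree exactly when
\[
\sum_{d\mid N}d\,P_d(m)=\#\Z(\bF_{q^{Nm}}),
\]
which is the Gauss relation of Remark~\ref{rm:gauss}: a closed point of $\Z$ of degree $d$ over $\bF_{q^m}$ contributes $d$ to $\#\Z(\bF_{q^{Nm}})$ precisely when $d\mid N$. As this holds for all $m$ and $N$, the two series coincide in $\cV\pser t$. The step I expect to be the main obstacle is the clean justification of the local factor, namely that $\#\Aut M_\la$ over $\cO_p$ depends only on $Q_p$ and $\la$, so that the degree-$d$ primes genuinely contribute the shifted Fine--Herstein series; once this reduction is secured, the global assembly is pure bookkeeping driven entirely by the Gauss relation.
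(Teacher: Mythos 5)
Your proposal is correct, and its skeleton coincides with the paper's: both decompose a pair $(V,\vi)$ of type $\Z$ into primary components indexed by the monic irreducible polynomials of type $\Z$ (the closed points of $\Z$), both rest on the key lemma that $\#\Aut$ of a $p$-primary module depends only on the partition and the residue cardinality, and both reassemble the product over closed points via the Gauss relation $\psi_n([\Z])=\sum_{r\mid n}r[\Phi^\Z_r]$ of Remark \ref{rm:gauss}. The genuine differences are in the endgame. First, you evaluate the local factor explicitly by importing Fine--Herstein together with the $q$-binomial theorem; the paper never evaluates $\sH^0$ at that stage: it keeps the local factor symbolic, applies Theorem \ref{power} to get $\sH^\Z(t)=\Pow\rbr{\sH^0(t),[\Z]}$, and then pins down $\sH^0$ by a bootstrap --- taking $\Z=\Z_*=\bA^1\ms\set0$ makes the left-hand side trivially $\sum_{n\ge0}t^n=\Exp(t)$, whence $\sH^0(t)=\Exp\rbr{\frac t{q-1}}$. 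So the paper's argument is self-contained and in effect \emph{reproves} Fine--Herstein, whereas yours consumes it as an input (legitimate, but less economical). Second, your entrywise comparison of logarithms in $\cV$ is a hand-made instance of exactly what Theorem \ref{power} packages, so the assembly can be shortened by citing it, as the paper does. Finally, the step you flag as the main obstacle is easily secured, and the paper does it in two lines: for $X=\bop_{i\ge1}(\bk[x]/(f^i))^{n_i}$ and $Y=\bop_{i\ge1}(K[x]/(x^i))^{n_i}$ with $K=\bk[x]/(f)$ and $r=\deg f$, one computes $\dim_\bk\End(X)=r\sum_{i,j}n_in_j\min\set{i,j}=r\dim_K\End(Y)$, hence $\#\End(X)=\#\End(Y)$, while $\fs(X)\iso\prod_iM_{n_i}(K)\iso\fs(Y)$; since the automorphisms are exactly the endomorphisms whose image in the semisimple quotient $\fs$ is invertible (the radical being nilpotent), this gives $\#\Aut(X)=\#\Aut(Y)$, which is all your local-factor reduction needs.
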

\begin{proof}
%We have
%$$\sH^\Z(t)=\sum_{n\ge0}[\sH^\Z_n]t^n\in\cV\pser t,\qquad \sH^\Z_n(K)=\frac{\# M_n^\Z(K)}{\#\GL_n(K)}.$$
Define $\sH^\Z(K;t)=\sum_{n\ge0}\sH_n^\Z(K)t^n\in\bQ\pser t$, for any field extension $K/\bk$.
The series $\sH^\Z(K;t)$ can be expressed as a weighted object count in the category $\cA^\Z(K)$ consisting of 
%finite-dimensional representations of $K[x]$ with the action of $x$ having eigenvalues in $F(\ubar\bk)$:
objects $X=(X_0,\vi)$, where $X_0$ is a vector space over $K$ and $\vi\in\End(X_0)^\Z$ (\cf \eqref{eq:HZ2})
$$\sH^\Z(K;t)
=\sum_{X\in\cA^\Z(K)\qt\sim}\frac{t^{\dim X}}{\#\Aut(X)}.$$
Indecomposable objects of the category $\cA^\Z(\bk)$ are of the form $X=\bk[x]/(f^i)$, where $f\in \bk[x]$ is monic irreducible with roots in $\Z(\ubar\bk)$ and $i\ge1$.
We have $\End(X)\iso \bk[x]/(f^i)$ and $\fs(X)=\bk[x]/(f)\iso\bF_{q^r}$, where $r=\deg f$.

Let $\cP$ be the set of maps $n:\bN^*=\bN_{>0}\to\bN$ with finite support -- any such map can be identified with the partition $\la=(1^{n_1}2^{n_2}\dots)$.
Given $n\in\cP$, consider modules
$$X=X_{f,n}=\bop_{i\ge1}(\bk[x]/(f^i))^{n_i},\qquad
Y=Y_{r,n}=\bop_{i\ge1}(K[x]/(x^i))^{n_i}
$$
over $\bk[x]$ and $K[x]$ respectively, where $K=\bF_{q^r}\iso\bk[x]/(f)$, $r=\deg f$.
Then $\#\Aut(X)=\#\Aut(Y)$.
Indeed, 
$$\dim_\bk\End(X)=r\sum_{i,j}n_in_j\min\set{i,j},\qquad
\dim_K\End(Y)=\sum_{i,j}n_in_j\min\set{i,j},$$
hence $\#\End(X)=\#\End(Y)$.
On the other hand $\fs(X)\iso\prod_i M_{n_i}(K)\iso\fs(Y)$.
%Hlaim: $\#\Aut(X)=a(K,n)$, where $a(K,n)=\Aut(\bop J_{i}(K)^{n_i})$, $J_i(K)=K[x]/(x^i)$ -- a module over $K[x]$, and $K=\bF_{q^{r_f}}=\bk[x]/(f)$. 

%\medskip
Let $\Phi^\Z(K)$ be the set of monic irreducible polynomials in $K[x]$ with roots in $\Z(\ubar\bk)$, and let $\Phi^\Z_r(K)\sbs\Phi^\Z(K)$ be the subset of degree $r$ polynomials.
Every object of the category $\cA^\Z(\bk)$ is isomorphic to
$$X_\bn
=\bop_{f\in\Phi^\Z(\bk)} X_{f,\bn_f}
=\bop_{f\in\Phi^\Z(\bk),i\ge1}(\bk[x]/(f^i))^{\bn_{f,i}}
$$ 
for some map $\bn:\Phi^\Z(\bk)\to\cP$ with finite support.
We have $\Aut(X_\bn)\iso\prod_f\Aut (X_{f,\bn_f})$.
Therefore
\begin{multline*}
\sH^\Z(\bk;t)
=\sum_{\bn:\Phi^\Z(\bk)\to\cP}\prod_{f\in\Phi^\Z(\bk)}\frac{t^{\deg f\sum_i i\bn_{f,i}}}{\#\Aut (X_{f,\bn_f})}
=\prod_{f\in\Phi^\Z(\bk)}\rbr{\sum_{n\in\cP}\frac{t^{\deg f\sum_i in_i}}{\#\Aut(X_{f,n})}}\\
=\prod_{f\in\Phi^\Z(\bk)}\rbr{\sum_{n\in\cP}\frac{t^{\deg f\sum_i in_i}}{\#\Aut(Y_{\deg f,n})}}
=\prod_{f\in\Phi^\Z(\bk)} \sH^0(\bF_{q^{\deg f}};t^{\deg f})
=\prod_{r\ge1}\sH^0(\bF_{q^r};t^r)^{\#\Phi^\Z_r(\bF_q)},
\end{multline*}
where $\sH^0(K;t)=\sH^{\Z_0}(K;t)$, $\Z_0=\set0\sbs\bA^1$, corresponds to nilpotent endomorphisms.
%This implies in terms of volumes
The same formula can be proved for any field extension $K/\bk$, hence we obtain
$$\sH^\Z(t)=\prod_{r\ge1}\rbr{\psi_r \sH^0(t)}^{[\Phi^\Z_r]}
.$$
By Corollary \ref{cor:abs vs ind} (see also Remark \ref{rm:gauss}), 
we have 
$$\psi_n([\Z])=\sum_{r\mid n}r[\Phi^\Z_r].$$
Therefore, by Theorem \ref{power}, we obtain
$$\sH^\Z(t)
%=\prod_{r\ge1}(\psi_r \sH^0(t))^{[\Phi^\Z_r]}
=\Pow(\sH^0(t),[\Z]).
$$
If $\Z=\Z_*=\bA^1\ms\set0$, we obtain
$$\Pow(\sH^0(t),q-1)=\sH^*(t)=\sum_{n\ge0} t^n=\Exp(t),$$
hence $\sH^0(t)=\Exp\rbr{\frac t{q-1}}$.
Therefore, in general, 
$\sH^\Z(t)
%=\Exp([Z]\Log \sH^0(t))
=\Exp\rbr{\frac {[\Z]}{q-1}t}$.
\end{proof}

\begin{remark}
Elements of $M_n(K)$ can be interpreted as finite-dimensional modules over $K[t]$, or equivalently, as zero-dimensional coherent sheaves over $\bA^1_K$.
Similarly, elements of $M_n^\Z(K)$ can be interpreted as zero-dimensional coherent sheaves over $\Z_K\sbs\bA^1_K$.
One can extend the above result to an arbitrary smooth curve $Z$ over \bk as follows.
Define the linear stack \cA with $\cA(K)=\Coh_0 Z_K$, the category of zero-dimensional coherent sheaves over $Z_K$,
for any field extension $K/\bk$.
Then the volume series of this stack is
\begin{equation}
\sH_\cA(t)=\Exp\rbr{\frac{[Z]}{q-1}t}.
\end{equation}
\end{remark}

\section{Commuting matrices}

\subsection{Linear stack of projective representations}
\label{sec:comm1}
Let $Q$ be an acyclic quiver and let $\bk=\bF_q$ be a finite field.
Define the linear stack \cA of projective $Q$-representations as follows.
For any field extension $K/\bk$, define $\cA(K)$ to be the category of projective $Q$-representations over $K$
and, for any field extension $L/K/\bk$, define the restriction functor
$$\rho_{L/K}:\cA(K)\to\cA(L),\qquad P\mto L\ts_KP.$$
There is an isomorphism $K_0(\cA(K))\iso\Ga=\bZ^{Q_0}$ constructed as follows.
Let $A=KQ$ be the path algebra of $Q$ over the field $K$.
We will identify the category of $Q$-representations over $K$ with the category of left $A$-modules.
For every vertex $i\in Q_0$, let $e_i\in A$ be the idempotent given by the trivial path at the vertex $i$.
Then $P(i)=Ae_i$ is an indecomposable projective $A$-module
and every projective $A$-module $P$ is isomorphic to $P_\bd=\bop_{i\in Q_0}P(i)^{d_i}$, for a unique $\bd\in\bN^{Q_0}$. 
We define $[P:P(i)]=d_i$ and define an isomorphism
$$\cls:K_0(\cA(K))\to\Ga=\bZ^{Q_0},\qquad P\mto([P:P(i)])_{i\in Q_0}.$$
For any $\z_1,\z_2\in\set{0,*,a}$, define the volume series $\sH^{\z_1,\z_2}(t)$
as in the introduction \eqref{eq:Hss1}:
\begin{equation}
\sH^{\z_1,\z_2}(t)=\sum_{\bd\in\bN^{Q_0}}\frac{\vl{ C^{\z_1,\z_2}_\bd}}{\vl{\Aut(P_\bd)}}t^\bd\in\cV\pser{t_i\rcol i\in Q_0}.
\end{equation}
where $C^{\z_1,\z_2}_\bd\sbs A_\bd^{s_1}\xx A_\bd^{s_2}$
is the subvariety of commuting pairs and $A_\bd=\End_A(P_\bd)$.
It is convenient to reformulate this definition using the above linear stack.
For every $\z\in\set{0,*,a}$, define the subvariety $\Z_\z\sbs \bA^1$ as in \eqref{eq:Zs2}
and define the linear stack $\cA^s=\cA^{Z_s}$ as in
\S\ref{sec:counting endo}.
Define the linear stack $\cA^{s_1,s_2}=(\cA^{s_1})^{s_2}
\iso(\cA^{s_2})^{s_1}$.
Then (\cf \eqref{eq:vol series1})
\begin{equation}
\sH^{s_1,s_2}(t)
=\sH_{\cA^{s_1,s_2}}(t)
=\sum_\bd[\sH_{\bd}]t^\bd,\qquad
\sH_{\bd}(K)
=\sum_{\ov{X\in\cA^{s_1,s_2}(K)\qt\sim}{\cls X=\bd}}\frac1{\#\Aut(X)}.
\end{equation}
We are ready to prove Theorem \ref{th:main1} from Introduction.

\begin{theorem}
\label{th:main1 proof}
For any $\z_1,\z_2\in\set{0,*,a}$, we have
$$\sH^{\z_1,\z_2}(t)=\Exp\rbr{\frac{[\Z_{\z_1}] [\Z_{\z_2}]}{q-1}\sA(t)},$$
where $\sA(t)=\sA_{\cA^0}(t)$ is the volume series of absolutely indecomposable objects of $\cA^0$
\eqref{eq:ai volume1}.
\end{theorem}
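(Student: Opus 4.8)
The plan is to exploit the iterated structure $\cA^{\z_1,\z_2}=(\cA^{\z_1})^{\z_2}$ recorded above and reduce the whole statement to a single application of Theorem~\ref{th:main proof} to the stack $\cB=\cA^{\z_1}$. First I would check that $\cB$ is again a linear stack to which the general theory applies: an object of $\cB(K)$ is a pair $(X_0,\vi_1)$ with $\vi_1\in\End(X_0)^{\Z_{\z_1}}$, its $\Hom$-spaces are the subspaces of the $\Hom$-spaces of $\cA(K)$ cut out by commutation with the chosen endomorphisms and hence still finite-dimensional, and it inherits Karoubianness from $\cA$. The class map $\cls(X_0,\vi_1)=\cls(X_0)$ takes values in the same lattice $\Ga=\bZ^{Q_0}$ and the same pointed cone, and for each $\bd$ there are only finitely many isomorphism classes, since there are finitely many $X_0$ with $\cls X_0=\bd$ and each $\End(X_0)$ is a finite set over the finite field $K$. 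Thus $\cB$ meets the running hypotheses (1)--(3) of \S\ref{sec:volumes of stacks}.

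With this in hand, using $\sH^{\z_1,\z_2}(t)=\sH_{(\cB)^{\z_2}}(t)=\sH^{\Z_{\z_2}}_{\cB}(t)$ and applying Theorem~\ref{th:main proof} to $\cB$ with the subvariety $\Z_{\z_2}$ gives
\[
\sH^{\z_1,\z_2}(t)=\sH^{\Z_{\z_2}}_{\cB}(t)
=\Exp\rbr{\frac{[\Z_{\z_2}]}{q-1}\,\sA_{\cB}(t)}.
\]
The only remaining ingredient is the volume series $\sA_{\cB}(t)=\sA_{\cA^{\z_1}}(t)$ of absolutely indecomposable objects of $\cB$, and here I would invoke Corollary~\ref{cor:compare ai}, which compares absolutely indecomposables of $\cA^{Z}$ with those of $\cA^0$ and yields $\sA_{\cA^{\z_1}}(t)=[\Z_{\z_1}]\,\sA_{\cA^0}(t)=[\Z_{\z_1}]\,\sA(t)$. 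Substituting this collapses the two factors into $[\Z_{\z_1}][\Z_{\z_2}]$ and produces exactly
\[
\sH^{\z_1,\z_2}(t)=\Exp\rbr{\frac{[\Z_{\z_1}][\Z_{\z_2}]}{q-1}\,\sA(t)},
\]
which is the claim.

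The one point that genuinely deserves care, and which I expect to be the main obstacle, is justifying that iterating the construction does what the notation suggests, namely that $(\cA^{\z_1})^{\z_2}(K)$ is the category of triples $(X_0,\vi_1,\vi_2)$ with $\vi_i$ of type $\Z_{\z_i}$ and $\vi_1\vi_2=\vi_2\vi_1$. An object of $(\cB)^{\z_2}$ is a pair $((X_0,\vi_1),\vi_2)$ in which $\vi_2$ is an endomorphism in $\cB$, hence an endomorphism of $X_0$ commuting with $\vi_1$, and of type $\Z_{\z_2}$ as an element of the subalgebra $\End_{\cB}(X_0,\vi_1)\sbs\End(X_0)$. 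Since the minimal polynomial of an element does not depend on which algebra one views it in, being of type $\Z_{\z_2}$ in the subalgebra is the same as being of type $\Z_{\z_2}$ in $\End(X_0)$; this simultaneously identifies objects of $\cA^{\z_1,\z_2}$ with commuting pairs of the prescribed types (matching $C^{\z_1,\z_2}_\bd$) and makes the symmetry $(\cA^{\z_1})^{\z_2}\iso(\cA^{\z_2})^{\z_1}$ transparent. Once this intrinsic--minimal--polynomial remark is in place the argument is purely formal, all the substance having already been absorbed into Theorem~\ref{th:main proof} and Corollary~\ref{cor:compare ai}.
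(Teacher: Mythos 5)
Your proposal is correct and follows essentially the same route as the paper's own proof: apply Theorem~\ref{th:main proof} to the linear stack $\cA^{\z_1}$ with the subvariety $\Z_{\z_2}$, then use Corollary~\ref{cor:compare ai} to rewrite $\sA_{\cA^{\z_1}}(t)=[\Z_{\z_1}]\sA(t)$ and collapse the two factors. The additional checks you include --- that $\cA^{\z_1}$ satisfies the running hypotheses of \S\ref{sec:volumes of stacks} and that being of type $\Z_{\z_2}$ is intrinsic because the minimal polynomial does not depend on the ambient algebra --- are details the paper leaves implicit, and they are correct.
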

\begin{proof}
We have by Theorem \ref{th:main proof}
$$\sH^{s_1,s_2}(t)
=\sH^{s_2}_{\cA^{s_1}}(t)
=\Exp\rbr{\frac{[Z_{s_2}]}{q-1}\sA_{\cA^{s_1}}(t)}.
$$
On the other hand $\sA_{\cA^{s_1}}(t)=[Z_{s_1}]\sA_{\cA^0}(t)=[Z_{s_1}]\sA(t)$ by Corollary \ref{cor:compare ai}. 
%Let $\cA$ be the linear stack of projective representations
%and let $\cB=\cA^0$, $\cC=\cA^{s_1}$.
%Then $\sA(t)=\sA_\cB(t)$ and
%$$\sH^{s_1,s_2}(t)
%=\sH^{s_2}_\cC(t)
%=\Exp\rbr{\frac{[Z_{s_2}]}{q-1}\sA_\cC(t)}
%$$
%by Theorem \ref{th:main proof}.
%On the other hand $\sA_\cC(t)=[Z_{s_1}]\sA_\cB(t)=[Z_{s_1}]\sA(t)$ by Corollary \ref{cor:compare ai}. 
%We conclude
%$$\sH^{s_1,s}(t)
%=\Exp\rbr{\frac{[Z_{s_2}][Z_{s_1}]}{q-1}\sA(t)}.
%$$
\end{proof}

\begin{corollary}
We have 
$$H^{*,0}(t)=\Pow(H^{0,0}(t),q-1).$$
\end{corollary}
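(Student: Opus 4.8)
The plan is to deduce this corollary formally from Theorem~\ref{th:main1 proof}, so that beyond the bookkeeping of the volume classes $[\Z_0]$ and $[\Z_*]$ nothing new is needed. First I would record the two relevant volumes: since $\Z_0=\set0$ is a single point and $\Z_*=\bA^1\ms\set0$, we have $[\Z_0]=1$ and $[\Z_*]=q-1$ in $\cV$. Substituting $\z_1=0,\z_2=0$ and then $\z_1=*,\z_2=0$ into Theorem~\ref{th:main1 proof}, and using $[\Z_0][\Z_0]/(q-1)=1/(q-1)$ together with $[\Z_*][\Z_0]/(q-1)=(q-1)/(q-1)=1$, I would obtain
\[
\sH^{0,0}(t)=\Exp\rbr{\frac{1}{q-1}\sA(t)},\qquad
\sH^{*,0}(t)=\Exp(\sA(t)).
\]

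Next I would unwind the definition of the plethystic exponentiation. Because $\sA(t)$ has no constant term, $\frac{1}{q-1}\sA(t)\in F^1\cV\pser t$ and hence $\sH^{0,0}(t)\in 1+F^1\cV\pser t$, which is exactly the domain on which $\Pow(-,q-1)=\Exp\rbr{(q-1)\Log(-)}$ is defined. Since $\Log$ is the inverse of $\Exp$ (\S\ref{lambda}), the first display gives $\Log\sH^{0,0}(t)=\frac{1}{q-1}\sA(t)$, and therefore
\[
\Pow\rbr{\sH^{0,0}(t),\,q-1}=\Exp\rbr{(q-1)\Log\sH^{0,0}(t)}=\Exp\rbr{(q-1)\cdot\tfrac{1}{q-1}\sA(t)}=\Exp(\sA(t))=\sH^{*,0}(t),
\]
which is the asserted identity (writing $H$ for $\sH$ as in the statement).

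Since the whole computation is a manipulation of $\Exp$, $\Log$ and $\Pow$ inside the complete \la-ring $\cV\pser t$, I do not expect a genuine obstacle here; the only point deserving a word of care is confirming that $q-1=[\Z_*]$ is a legitimate exponent, \ie that $\sH^{0,0}(t)\in 1+F^1$ so that $\Pow$ is applied within its domain. As an alternative I would note that the corollary is simply the special case $Z=\Z_*$ of part (4) of the four-part corollary following Theorem~\ref{th:main proof}, namely $\sH^Z_\cB(t)=\Pow(\sH^0_\cB(t),[Z])$, applied to the stack $\cB=\cA^0$ and combined with $[\Z_*]=q-1$ and the symmetry $\sH^{*,0}=\sH^{0,*}$ coming from $\cA^{s_1,s_2}\iso\cA^{s_2,s_1}$; either route yields the same one-line conclusion.
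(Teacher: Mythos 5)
Your proof is correct and follows the same route the paper intends: the corollary is stated without proof as an immediate consequence of Theorem~\ref{th:main1 proof}, via exactly your computation $[\Z_0]=1$, $[\Z_*]=q-1$, so that $\sH^{0,0}(t)=\Exp\bigl(\sA(t)/(q-1)\bigr)$ and $\sH^{*,0}(t)=\Exp(\sA(t))$ (these are also recorded as \eqref{eq:H00}), whence $\Pow(\sH^{0,0}(t),q-1)=\Exp\bigl((q-1)\Log\sH^{0,0}(t)\bigr)=\sH^{*,0}(t)$. Your attention to the domain of $\Pow$ and the alternative derivation via $\sH^Z_{\cA^0}(t)=\Pow(\sH^0_{\cA^0}(t),[Z])$ are both sound but add nothing beyond what the paper's setup already guarantees.
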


%\begin{remark}
%The above theorem implies that Conjecture \ref{conj1} is equivalent to the polynomiality of the volumes $[\sH_\bd^{*,0}]$ or $[\sA_\bd]$.
%\end{remark}

\begin{remark}
\label{rem: s method}
Given a linear stack \cB, define, as before, the linear stack $\cB^0$ consisting of pairs $(X,\vi)$, where $X$ is an object from $\cB$ and $\vi\in\End(X)$ is nilpotent.
There is a general method of counting (weighted) volumes of $\cB^0$ due to Schiffmann \cite{schiffmann_indecomposable,mozgovoy_countinga}.
It reduces the object counting in $\cB^0$ to the counting of certain flags in $\cB$.
However, a thorough analysis of this method \cite{mozgovoy_countinga} shows that one requires \cB to
be abelian and hereditary (meaning that the corresponding categories of \cB are abelian and hereditary).
In our case we need to consider $\cB=\cA^0$, where $\cA$ is the linear stack of projective quiver representations.
The stacks \cA and \cB are not abelian,
but we can embed $\cA\sbs\bar\cA$ into the stack of all quiver representations and then embed $\cB\sbs\bar\cB=\bar\cA^0$.
The stack $\bar\cB$ has homological dimension two, but the second \Ext-groups between the objects of \cB vanish, hence we can say that \cB is rather close to a hereditary category.
Nevertheless, this technical difficulty seems to be a serious obstacle for the application of the above mentioned method.
 \end{remark}

\begin{remark}
\label{rem:evs result}
It was proved by Evseev \cite{evseev_conjugacy,evseev_groups} that the original Higman's conjecture (polynomiality for the quiver of type $A_n$ and vector $\bd=(1^n)$ in our language)
would follow from polynomiality of the commuting varieties $C_\bd^{*,0}$ for the quiver $Q=[1\to 2]$ and arbitrary $\bd\in\bN^2$.
Projective representations of this quiver correspond to injective linear maps $f:V_1\to V_2$ between vector spaces.
By the previous results, polynomiality of $C_\bd^{*,0}$ is equivalent to polynomiality of (unweighted) volumes of absolutely indecomposable objects in $\cA^0$, where $\cA$ is the stack of projective $Q$-representations,
as well as polynomiality of unweighted volumes of $\cA^0$.
%(as $\sH^*_{\cA^0}=\Exp(\sA_{\cA^0})$).
An object in $\cA^0(\bk)$ can be specified by nilpotent representations of the Jordan quiver $C_1$ (having one vertex and one loop) at every vertex of $Q$ and an injective homomorphism between them.
These nilpotent representations have the form $V_1\iso I_\la=\bop_{i\ge1}\bk[x]/(x^{\la_i})$ and $V_2\iso I_\nu$, for some partitions $\la,\nu$.
Then the number of isomorphism classes of object in $\cA^0(\bk)$ having class $\bd$ is
$$\rho_{(d_1,d_2)}=\sum_{\n\la=d_1,\n\nu=d_1+d_2}\#\Hom^\inj(I_\la,I_\nu)/(A_\la\xx A_\nu),$$
where $A_\la=\Aut(I_\la)$.
One can conjecture that every summand here is polynomial-count.
One can also refine this even further by considering the set $\cP_{\mu\la}^\nu$ of short exact sequences
$$0\to I_\la\to I_\nu\to I_\mu\to0,$$
equipped with an action of $A_\la\xx A_\nu\xx A_\mu$, and observing that
$$\#\Hom^\inj(I_\la,I_\nu)/(A_\la\xx A_\nu)
=\sum_\mu\#\cP_{\mu\la}^\nu/(A_\la\xx A_\nu\xx A_\mu).$$
Then one can ask if every summand here is polynomial-count.
Note that $\#\cP_{\mu\la}^\nu/(A_\la\xx A_\mu)$ are polynomial-count as they are just the classical Hall polynomials \mac.
Note also that
$$\bigsqcup_\nu\cP_{\mu\la}^\nu/A_\nu\iso\Ext^1(I_\mu,I_\la)
\iso\Hom(I_\la,I_\mu)^*,
$$
hence 
$$\rho_{(d_1,d_2)}=\sum_{\n\la=d_1,\n\mu=d_2}\#\Hom(I_\la,I_\mu)/(A_\la\xx A_\mu)$$
and one can again conjecture that every summand is polynomial-count.
Note that the last sum corresponds to an unweighted volume of the stack $\cB^0$, where $\cB$ is the stack of all $Q$-representations (not just projective).
\end{remark}

\subsection{Examples}
\label{sec:examples}
Consider the quiver of type $A_n$:
$[1\to2\to\dots\to n]$
and vector $\bd=(1^n)=(1,\dots,1)$.
We have $B_n=\Aut(P_\bd)$ and $U_n=1+\End(P_\bd)^0$.
Therefore
\begin{gather}
%\begin{equation}
\sH_\bd^{0,0}(\bF_q)
=\frac{\#\sets{(x,y)\in U_n\xx U_n}{xy=yx}}{\#B_n}
=\frac1{(q-1)^n}\ga(U_n,U_n).\\
%\end{equation}
%\begin{equation}
\sH_\bd^{*,0}(\bF_q)
=\frac{\#\sets{(x,y)\in B_n\xx U_n}{xy=yx}}{\#B_n}
=\ga(B_n,U_n).
%\end{equation}
\end{gather}

Let $\cA$ be the linear stack of projective quiver representations and let $\cA^0$ be the linear stack of pairs $(P,\vi)$, where $P$ is a projective quiver representation and $\vi\in\End(P)$ is nilpotent.
As before, we define $\sA(t)=\sA_{\cA^0}(t)=\sum_\bd[\sA_\bd]t^\bd$, where $[\sA_\bd]$ counts isomorphism classes of absolutely indecomposable objects in $\cA^0$ having class $\bd$.
Then Theorem \ref{th:main1 proof} implies
\begin{equation}
\label{eq:H00}
\sH^{0,0}(t)
=\sum_\bd [\sH^{0,0}]t^\bd
=\Exp\rbr{\frac{\sA(t)}{q-1}},\qquad
\sH^{*,0}(t)
=\sum_\bd [\sH^{*,0}]t^\bd
=\Exp(\sA(t)).
\end{equation}

\begin{remark}
Note that the above formulas imply that we can express $\sH^{0,0}_{(1^n)}$ and $\sH^{*,0}_{(1^n)}$ in terms of
$\sA_{(1^k)}$ for $k\le n$ (and vice versa).
An explicit formula relating $\sH^{*,0}_{(1^n)}$ and 
$\sH^{0,0}_{(1^k)}$ can be found in \cite[\S4.2]{evseev_groups}.
One can also derive it from the above equations.
\end{remark}

Invariants $\al_n=\ga(U_n,U_n)$ were computed in \cite{vera-lopez_conjugacya,
vera-lopez_some,
vera-lopez_conjugacy}
for $n\le 13$ and in \cite{pak_higmans} for $n\le 16$.
Here is the list for $n\le10$:
\begin{flalign*}
\al_1&=1&\\
\al_2&=q\\
\al_3&=q^{2}+q-1\\
\al_4&=2q^{3}+q^{2}-2q\\
\al_5&=5q^{4}-5q^{2}+1\\
\al_6&=q^{6}+12q^{5}-5q^{4}-15q^{3}+5q^{2}+4q-1\\
\al_7&=8q^{7}+28q^{6}-35q^{5}-35q^{4}+35q^{3}+7q^{2}-7q
\\
\al_8&=4q^{9}+38q^{8}+48q^{7}-168q^{6}-28q^{5}+161q^{4}-28q^{3}-32q^{2}+4q+2
\\
\al_9&=3q^{11}+39q^{10}+146q^{9}-75q^{8}-606q^{7}+364q^{6}+504q^{5}-381q^{4}-53q^{3}+57q^{2}+6q-3
\\
\al_{10}&=5q^{13}+45q^{12}+240q^{11}+322q^{10}-1255q^{9}-1185q^{8}+2880q^{7}+310q^{6}-2124q^{5}\\
&+565q^{4}+280q^{3}-60q^{2}-25q+3
%\\
%11q^{15}+77q^{14}+385q^{13}+990q^{12}-1044q^{11}-6534q^{10}+4026q^{9}+11220q^{8}-8855q^{7}\\
%-5390q^{6}+6094q^{5}-363q^{4}-660q^{3}+44q
%\\
%2q^{18}+24q^{17}+176q^{16}+772q^{15}+2116q^{14}+609q^{13}-15512q^{12}-11620q^{11}+50380q^{10}\\
%+3113q^{9}-61094q^{8}+20018q^{7}+24913q^{6}-14091q^{5}-1124q^{4}+1228q^{3}+150q^{2}-53q-6
%\\
%13q^{20}+78q^{19}+429q^{18}+1898q^{17}+4966q^{16}+3588q^{15}-28859q^{14}-70147q^{13}+106080q^{12}\\
%+187174q^{11}-273702q^{10}-107978q^{9}+264758q^{8}-39468q^{7}-78832q^{6}+26637q^{5}+5564q^{4}\\
%-1820q^{3}-429q^{2}+38q+13
\end{flalign*}

Applying Theorem \ref{th:main1 proof} we can determine invariants $\sA_n=[\sA_{(1^n)}]$ from the above invariants:
\begin{flalign*}
\sA_1&=1&\\
\sA_2&=1\\
\sA_3&=1\\
\sA_4&=2\\
\sA_5&=5\\
%\end{flalign*}
%\begin{flalign*}
\sA_6&=q+17&\\
\sA_7&=8\,q+69\\
\sA_8&=4\,{q}^{2}+66\,q+334&\\
\sA_9&=3\,{q}^{3}+63\,{q}^{2}+530\,q+1855\\
\sA_{10}&=5\,{q}^{4}+90\,{q}^{3}+840\,{q}^{2}+4492\,q+11673
%\\
%\sA_{11}&=11\,{q}^{5}+187\,{q}^{4}+1705\,{q}^{3}+10285\,{q}^{2}+40096\,q+81861\\
%\sA_{12}&=2\,{q}^{7}+46\,{q}^{6}+572\,{q}^{5}+4732\,{q}^{4}+28384\,{q}^{3}+125107\,{q}^{2}+380411\,q+633276\\
%\sA_{13}&=13\,{q}^{8}+234\,{q}^{7}+2353\,{q}^{6}+17238\,{q}^{5}+98423\,{q}^{4}+443326\,{q}^{3}+1540540\,{q}^{2}\\
%&+3830853\,q+5353087
\end{flalign*}
Similarly, one can determine invariants $\ga(B_n,U_n)$
from the invariants $\ga(U_n,U_n)$ for $n\le16$ \cite[\S4.2]{evseev_groups}.
% for the list of invariants $\ga(B_n,U_n)$ for $n\le 13$.
One can observe that invariants $\sA_n$ are significantly simpler than invariants $\ga(U_n,U_n)$ and somewhat simpler than invariants $\ga(B_n,U_n)$.
This can be explained by equations \eqref{eq:H00}.
Note that we have $\sA_n\in\bN[q]$ in all known examples, as one would expect from analogues of Kac polynomials (or from Donaldson-Thomas invariants) for the stack $\cA^0$.

\providecommand{\bysame}{\leavevmode\hbox to3em{\hrulefill}\thinspace}
\providecommand{\href}[2]{#2}

%\bibliography{../../tex/papers}
%\bibliographystyle{../../tex/hamsplain}
\end{document}